\date{\today}
\journal{}
\begin{document}
\newtheorem{The}{Theorem}[section]
\newtheorem{lem}[The]{Lemma}
\newtheorem{rem}{Remark}[section]
\newtheorem{ass}{Assumption}[section]
\newtheorem{prop}{Proposition}

\begin{frontmatter}

\title{Coupled iterative analysis for  the stationary thermally coupled  inductionless MHD system based on charge-conservative finite element method}

\author{Shitian Dong}
\address{College of Mathematics and System Sciences,
Xinjiang University, Urumqi 830046, P.R. China}
\author{Haiyan Su \corref{correspondingauthor}}
\address{College of Mathematics and System Sciences,
Xinjiang University, Urumqi 830046, P.R. China}
\cortext[correspondingauthor]{Corresponding author: shymath@xju.edu.cn (H. Su)}

\ead{shymath@xju.edu.cn (H. Su), cmd@stu.xju.edu.cn (S. Dong)}

\begin{abstract}
This paper mainly considers three iterations based on charge-conservative finite element approx- imation in Lipschitz domain for the stationary thermally coupled  inductionless MHD equations. Based on the hybrid finite element method, the unknowns of hydrodynamic are discretized by the stable velocity-pressure finite element pair, and the current density along with electric potential are  similarly discretized by the comforming finite element pair in $\bm{H}(\mbox{div}, \Omega)\times L^2(\Omega)$.  And on account of the strong nonlinearity of the equations, we present three coupled iterative methods, namely, the Stokes, Newton and Oseen iteration and the convergence and stability under different uniqueness conditions are analyzed strictly. It is proved especially that the error estimates of velocity, current density, temperature and pressure do not depend on potential. The theoretical analysis is validated by the given numerical results, and for the proposed methods, the applicability and effectiveness are demonstrated.
\end{abstract}

\end{frontmatter}
\section{Introduction}
The inductionless magneto-hydrodynamics, referred to as IMHD,  is the theory that simulates the interaction between electromagnetic field and conductive fluid as the magnetic Reynolds number is small. 
IMHD model plays a very much important role in wide engineering applications, such as
 aluminum electrolysis, liquid metal magnetic pumps, and  fusion reactor blankets among others \cite{abdou2001exploration,moreau1990magnetohydrodynamics,20101On}.
 It is worth noting that the influence of temperature on the fluid cannot be ignored in practical applications. Thermal effects often plays a significant role in industrial MHD flows (i.e. metal hardening, semiconductor manufacturing, etc) \cite{meir1995thermally,ravindran2018decoupled,hughes1966electromagnetodynamics,davidson2002introduction,lifshits2012magnetohydrodynamics,gerbeau2006mathematical}. Thus, it is necessary to study the thermally coupled stationary IMHD problem.

The divergence-free condition for current density, namely $\nabla$$\cdot$$\bm{J}$$=$$0$, means conservation of charge for the IMHD equations. It is an important physical law of electromagnetics which takes an greatly important part in maintaining the computational accuracy of simulation of magnetohydrodynamic system. How to design a numerical method that can guarantee $\nabla$$\cdot$$\bm{J}_h$$=$$0$ has attracted much attention \cite{layton1994two,2021Coupled,2018A,hu2017stable,greif2010mixed}.
The charge-conservative and consistent finite volume schemes for IMHD equations with unstructured and structured meshes based on post- processing have been proposed in \cite{ni2007current,ni2007current1}.
 The charge-conservative played a key role in forming closed loops for the streamlines from the conservative scheme was shown in \cite{ni2012consistent}.
In addition, numerical experiments conducted therein showed that streamlines with non-conserved charge schemes fail to form closed loops at corners. A charge-conservative finite element method that yields
an exactly divergence-free current density directly was proposed in \cite{0a} to solve the IMHD equations. Recently the charge-conservative finite element approximation and three classic coupled iterations for stationary IMHD equations was proposed in \cite{2021Coupled}.

 Compared with the conservation scheme of IMHD, the work on conservation scheme for thermally coupled IMHD models is relatively few. A new family of recursive block LU precondi- tioners was designed and tested for solving the thermally coupled IMHD equations in \cite{2014Block}. For the  time- dependent IMHD model coupled thermal problem, a charge conservative fully discrete finite element scheme was proposed and analyzed in \cite{long2022convergence}.

Different from \cite{long2022convergence}, we mainly consider the iterative methods based on charge-conservative for the stationary thermally coupled IMHD equations in the Lipschitz domain in this paper. The core concept of the paper is to design effective iterative methods which can deal with the conservation of charge and the strong nonlinear characteristics of the thermally coupled IMHD equations. Our specific ideas are as follows: First, for the spatial discretization, we use $\bm{H}(\mbox{div}, \Omega)$-conforming face element and $L^2(\Omega)$-conforming volume element to discrete separately current density $\bm{J}$ and potential $\phi$, so as to generate directly the exact divergence-free current density. Second, due to the strong nonlinearity of the equations, three iterative methods are proposed and the strong uniqueness conditions: $0<\sigma<\frac{1}{4}$ (Stokes iterative method),  $0<\sigma<\frac{1}{3}$ (Newton iterative method), the uniqueness condition: $0<\sigma<1$ (Oseen iterative method), respectively, are obtained, where $\sigma:=\frac{\lambda_2\|L\|_*}{C_{\min}^2}+\frac{\lambda_2\lambda_q\lambda_{\psi}}{C_{\min}^2}
+\frac{\lambda_2\lambda_q\lambda_{\psi}}{C_{\min}}+\frac{\lambda_2\|L\|_*}{C_{\min}}<1$. Third,
the optimal error estimates with respect to iteration number $n$ and mesh size $h$ for the three methods are presented.
Finally, numerical examples verify our theoretical results and show the uniqueness range of the three iterative methods.

The structure of this paper is as below: In the second section, some symbols of Sobolev space are introduced, and a weak form is proposed, and then the well posedness of the continuous problem is further proved. In the third section, using the hybrid finite element method, we give the well posed and the optimal error estimates for the discrete problem. In the fourth section, the three iteration methods of the equations (\ref{eq:3.2}) are introduced, and then stabilities and convergence for them are further analyzed theoretically below the conditions differently. In the fifth section, through some numerical experiments we test and verify the performance of the numerical scheme.

\section{Preliminaries}
In the paper, we consider  the stationary thermally coupled inductionless MHD equations as follows:
\begin{equation}\label{eq:2.1}
\begin{split}
Pr\Delta \bm{u} +(\bm{u}\cdot\nabla)\bm{u} + Pr\nabla p -\kappa \bm{J}\times \bm{B} -PrRa\theta\bm{i} & =\bm{f},  \quad \mbox{in} \ \Omega, \\
\bm{J}+\nabla \phi-\bm{u}\times \bm{B}& =\bm{g}, \quad \mbox{in} \ \Omega,\\
-\Delta \theta + \bm{u}\cdot \nabla \theta & = \varphi, \quad \mbox{in} \ \Omega,\\
\nabla \cdot \bm{u}=0, \ \nabla \cdot \bm{J} & =0, \quad \mbox{in} \ \Omega,\\
\end{split}
\end{equation}
where $\Omega$ is a bounded domain in $R^d,$ $ d=2, 3$ whose boundary $\partial\Omega$ is Lipschitz-continuous, $\bm{u}$  the fluid velocity, $p$  the hydrodynamic pressure, $\bm{J}$  the current density, $\phi$  the electric potential, $\theta$  the temperature, the Rayleigh number $Ra$, $\bm{i}$ the unit basis vector, $\kappa$ the coupling number, $\bm{f}$ and $\bm{g}$ are given external force terms, $\varphi$ a given a given heat source, $\bm{B}$ is a hypothetical given applied magnetic field.
The system of the equations is supplemented by homogeneous  boundary conditions:
\begin{equation}\label{eq:2.2}
\begin{split}
\bm{u}=\bm{0}, \quad \bm{J}\cdot\bm{n}=0, \quad \theta=0, \quad \mbox{on} \ \partial\Omega.
\end{split}
\end{equation}
We rewrite those 2D variables using 3D fashion for the sake of studying 2D and 3D model simultaneously. Hence, the 2D variables $\bm{\vartheta}$  are described by $\bm{\vartheta}=(\vartheta_1(x), \vartheta_2(x), 0),\ \bm{\vartheta}=\bm{u}, \bm{J}, \bm{f}, \bm{g}$.
On the contrary, the applied magnetic field is $\bm{B}=(0, 0, B_{3}(x))^{T}.$

We introduce some Sobolev spaces used in this article. Let $L^p(\Omega)$ be $p$-th integrable function space, whose normal is capable of being defined by $\|\cdot\|_p$. Inner product and norm of $L^2(\Omega)$ can be described as :
$$(u,v):=\int_{\Omega}{uv}dx, \quad \|v\|_0=\|v\|_2:=(v,v)^{\frac{1}{2}}.$$
The standard Sobolev space $H^{m}$  has its corresponding seminorm and norm defined by $|\cdot|_{m,\Omega}$ and $\|\cdot\|_{m,\Omega}$, respectively.
For convenience's sake, throughout the paper, vector is represented by  bold-type letter and scalar by roman alphabet, while we introduce the following function spaces:
$$\bm{X}:= \bm{H}_0^1(\Omega), \quad E := L_0^2(\Omega), \quad
\bm{Y}:= \bm{H}_0(\mbox{div},\Omega), \quad S := L_0^2(\Omega), \quad Z:=H_0^1(\Omega),$$
and product spaces $\bm{X}\times\bm{Y}$ and $E\times S$ with the energy norms denoted as
$$\|(\bm{w},\bm{K})\|_1=(\|\nabla\bm{w}\|_0^2,\|\bm{K}\|_{div}^2)^{\frac{1}{2}},\quad
\|(q,\psi)\|_0=(\|q\|_0^2+\|\psi\|_0^2)^\frac{1}{2}.$$
Here, the div-norm is defined by $\|K\|_{div}=(\|K\|_0^2+\|\nabla \cdot K\|_2)^{\frac{1}{2}}.$
Further, we define the norm of the source term by
$$\|\bm{f}\|_{-1}=\mathop{\sup}\limits_{\bm{v}\neq\bm{0}\in \bm{X}}\frac{\langle \bm{f},\bm{v}\rangle}{\|\nabla \bm{v}\|_0},\quad \|L\|_* = (\|\bm{f}\|_{-1}^2+\kappa^2\|\bm{g}\|^2)^\frac{1}{2},\quad \lambda_{\varphi}=\|\varphi\|_{-1}=\mathop{\sup}\limits_{r\neq 0 \in Z}\frac{\langle \varphi,r \rangle}{\|\nabla r\|_0}. $$
In order to make the weak form of equations (\ref{eq:2.1})-(\ref{eq:2.2}) clearer,
we define the following bilinear forms: for $\forall \bm{w}, \bm{v}, \bm{u}\in \bm{X}, q\in E, \bm{F}, \bm{K}\in \bm{Y}, \psi\in S$ and $\theta, r\in Z$
\begin{equation*}
\begin{split}
&a_s(\bm{v},\bm{w})=R_e^{-1}(\nabla\bm{v},\nabla\bm{w}),\quad a_m(\bm{F},\bm{K})=\kappa (\bm{F},\bm{K}),\quad b_s(q,\bm{v})=-Pr(q,\nabla\cdot v),\\
&e(\theta,r)=(\nabla\theta,\nabla r),\quad d(\bm{K},\bm{v})=-\kappa(\bm{K}\times\bm{B},\bm{v}),\quad b_m(\psi,\bm{F})=-\kappa(\psi,\nabla\cdot\bm{F}),
\end{split}
\end{equation*}
and trilinear forms:
\begin{equation*}
\begin{split}
&c(\bm{u},\bm{v},\bm{w})=\frac{1}{2}((\bm{u}\cdot\nabla)\bm{v},\bm{w})-\frac{1}{2}((\bm{u}\cdot\nabla)\bm{w},\bm{v}),\quad h(\bm{u},\theta,r)=\frac{1}{2}((\bm{u}\cdot\nabla)\theta,r)-\frac{1}{2}((\bm{u}\cdot\nabla)r,\theta).
\end{split}
\end{equation*}
For the convenience of our analysis and proof below, we use a more compact way to express the weak form of (\ref{eq:2.1}) as follows:
find $(\bm{u},\bm{J},p,\phi,\theta)\in \bm{X}\times\bm{Y}\times E\times S \times Z$ such that
\begin{equation}\label{eq:2.3}
\left\{\begin{array}{c}
\begin{aligned}
&\mathcal{A}_0((\bm{u},\bm{J}),(\bm{v},\bm{K}))+\mathcal{A}_1((\bm{u},\bm{J}),(\bm{u},\bm{J}),(\bm{v},\bm{K}))\\
& +\mathcal{B}((p,\psi),(\bm{v},\bm{K}))+\mathcal{Q}(\theta,(\bm{v},\bm{K})) =\langle \bm{L},(\bm{v},\bm{K})\rangle, \\
&\mathcal{B}((q,\psi),(\bm{u},\bm{J})) =0,\\
&e(\theta,r)+\mathcal{H}((\bm{u},\bm{J}),\theta,r) =\Psi(r),
\end{aligned}
\end{array} \right.
\end{equation}
for $\forall (\bm{v},\bm{K},p,\psi,r)\in \bm{X}\times\bm{Y}\times E\times S \times Z$, where
\begin{equation*}
\begin{aligned}
&\mathcal{A}_0((\bm{u},\bm{J}),(\bm{v},\bm{K}))=a_s(\bm{u},\bm{v})+a_m(\bm{J},\bm{K})+d(\bm{J},\bm{v})-d(\bm{K},\bm{u}),\quad
\mathcal{Q}(\theta,(\bm{v},\bm{K}))=-PrRa\bm{i}(\theta,\bm{v}),\\
&\mathcal{A}_1((\bm{w},\bm{M}),(\bm{u},\bm{J}),(\bm{v},\bm{K}))=c_0(\bm{w},\bm{u},\bm{v}), \quad
\mathcal{B}((q,\psi),(\bm{v},\bm{K})) =b_s(q,\bm{v})+b_m(\phi,\bm{K}),\\
&\mathcal{H}((\bm{u},\bm{J}),\theta,r)=h_0(\bm{u},\theta,r), \quad
\langle \bm{L},(\bm{v},\bm{K})\rangle=\langle \bm{f},\bm{v}\rangle+\kappa(\bm{g},\bm{K}), \quad
\Psi(r)=\langle \varphi,r \rangle.
\end{aligned}
\end{equation*}

Furthermore, the kernel space is defined
\begin{equation*}
\begin{aligned}
\Upsilon=\{ (\bm{v},\bm{M})\in \bm{X}\times\bm{Y}:\mathcal{B}((q,\psi),(\bm{v},\bm{M}))=0,\ \forall (q,\psi)\in E\times S\}=\bm{V}\times\bm{U},
\end{aligned}
\end{equation*}
where
\begin{equation*}
\begin{aligned}
\bm{V}=\{\bm{v}\in \bm{X} : b_s(q,\bm{v})=0,\ \forall q\in E\},\quad \bm{U}=\{\bm{K}\in \bm{Y} : b_m(\psi,\bm{K})=0,\ \forall \psi \in S\},
\end{aligned}
\end{equation*}
and the following inequalities frequently used in our proof are valid in general Lipschitz polyhedra,
\begin{equation*}
\begin{aligned}
\|\bm{v}\|_{\bm{L}^6(\Omega)}\le \tilde{\lambda}_1\|\nabla \bm{v}\|_0, \quad \|\bm{v}\|_{\bm{L}^4(\Omega)}\le \tilde{\lambda}_2\|\nabla \bm{v}\|_0^2,\quad \|\bm{v}\|_0 \le \tilde{C}_{\Omega}\|\nabla \bm{v}\|_0\quad \forall \bm{v}\in \bm{X},
\end{aligned}
\end{equation*}
\begin{equation*}
\begin{aligned}
\|r\|_{L^6(\Omega)}\le \hat{\lambda}_1\|\nabla r\|_0, \quad \|r\|_{L^4(\Omega)}\le \hat{\lambda_2}\|\nabla r\|_0^2,\quad \|r\|_0 \le \hat{C}_{\Omega}\|\nabla r\|_0, 
 \quad \forall r\in Z,
\end{aligned}
\end{equation*}
where $\lambda_1$$=\max(\tilde{\lambda}_1, \hat{\lambda}_1)$, $\lambda_2$$=\max(\tilde{\lambda}_2, \hat{\lambda}_2)$ and $C_{\Omega}$$=\max(\tilde{C}_{\Omega}, \hat{C}_{\Omega})$, the positive constants $\tilde{\lambda}_1,$ $\tilde{\lambda}_2,$ $\hat{\lambda}_1,$ $\hat{\lambda}_2,$ $\tilde{C}_{\Omega},$ $\hat{C}_{\Omega},$ only depend on $\Omega$.

\begin{lem}
 The following estimates for bilinear and trilinear terms hold:\\
$(1).$ The bilinear form $\mathcal{A}_0((\cdot,\cdot),(\cdot,\cdot))$ is bounded on $(\bm{X}\times\bm{Y})\times(\bm{X}\times\bm{Y})$ and coercive on $(\bm{X}\times\bm{U})\times(\bm{X}\times\bm{U}),$ namely,
\begin{equation}\label{eq:2.4}
\begin{aligned}
|\mathcal{A}_0((\bm{u},\bm{J}),(\bm{v},\bm{K}))|\le C_{\max}\|(\bm{u},\bm{J})\|_1\|(\bm{v},\bm{K})\|_1,
\end{aligned}
\end{equation}
\begin{equation}\label{eq:2.5}
\begin{aligned}
\mathcal{A}_0((\bm{u},\bm{J}),(\bm{u},\bm{J}))\ge C_{\min}\|(\bm{u},\bm{J})\|_1^2,
\end{aligned}
\end{equation}
where \ $C_{\max}=2\max\{Pr, \kappa, \kappa\lambda_1\|\bm{B}\|_{\bm{L}^3(\Omega)}\},$ \ $C_{\min}=\min\{Pr, \kappa\}.$\\
(2). The continuous property of the bilinear form $\mathcal{B}((\cdot,\cdot),(\cdot,\cdot)),$ nambly, for any $(\bm{u}, q, \bm{J}, \psi)\in (\bm{X}\times E)\times(\bm{Y}\times S),$
\begin{equation}\label{eq:2.6}
\begin{aligned}
|\mathcal{B}((q,\psi),(\bm{v},\bm{K}))|\le \max\{Pr,\kappa\}\|(\bm{v},\bm{K})\|_1\|(q,\psi)\|_0.
\end{aligned}
\end{equation}
(3). The continuous property of the bilinear form $\mathcal{Q}(\cdot,(\cdot,\cdot))$, for any $(\theta,\bm{v},\bm{K})\in Z\times \bm{X}\times\bm{Y},$
\begin{equation}\label{eq:2.7}
\begin{aligned}
|\mathcal{Q}(\theta,(\bm{v},\bm{K}))| \le \lambda_q\|(\bm{v},\bm{K})\|_1\|\nabla \theta\|_0,
\end{aligned}
\end{equation}
where $\lambda_q=PrRaC_\Omega^2.$ \\
(4). The continuous property of the bilinear form $e(\cdot,\cdot)$ is also bounded and coercive on $Z$,
\begin{equation}\label{eq:2.8}
\begin{aligned}
e(\theta,r)\le \|\nabla \theta\|_0\|\nabla r\|_0,
\end{aligned}
\end{equation}
\begin{equation}\label{eq:2.9}
e(\theta,\theta)\ge \|\nabla \theta\|_0^2.
\end{equation}
(5). The trilinear form $\mathcal{A}_1((\cdot,\cdot),(\cdot,\cdot),(\cdot,\cdot))$ and $\mathcal{H}((\cdot,\cdot),(\cdot,\cdot),(\cdot,\cdot))$ are skew-symmetric with respect to their last two arguments and bounded on $(\bm{X}\times\bm{Y})\times(\bm{X}\times\bm{Y})\times(\bm{X}\times\bm{Y})$  and  $(\bm{X}\times\bm{Y})\times Z\times Z$   respectively,
\begin{equation}\label{eq:2.10}
\mathcal{A}_1((\bm{w},\bm{M}),(\bm{u},\bm{J}),(\bm{u},\bm{J}))=0,
\end{equation}
\begin{equation}\label{eq:2.11}
\begin{aligned}
|\mathcal{A}_1((\bm{w},\bm{M}),(\bm{u},\bm{J}),(\bm{v},\bm{K}))| \le \lambda_2\|(\bm{w},\bm{M})\|_1\|(\bm{u},\bm{v})\|_1\|(\bm{v},\bm{K})\|_1,
\end{aligned}
\end{equation}
\begin{equation}\label{eq:2.12}
\begin{aligned}
\mathcal{H}((\bm{u},\bm{J}),\theta,\theta)=0,
\end{aligned}
\end{equation}
\begin{equation}\label{eq:2.13}
\begin{aligned}
|\mathcal{H}((\bm{u},\bm{J}),\theta,r)|\le \lambda_2\|(\bm{u},\bm{J})\|_1\|\nabla \theta\|_0\|\nabla r\|_0.
\end{aligned}
\end{equation}
\end{lem}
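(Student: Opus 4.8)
The plan is to establish each of the six groups of estimates separately; every one of them reduces to a combination of the Cauchy--Schwarz and H\"older inequalities, the Sobolev and Poincar\'e inequalities collected above, and --- for the two coercivity statements and the two skew-symmetry identities --- the antisymmetric algebraic structure built into the coupling term $d$ and into the antisymmetrized trilinear forms defining $\mathcal{A}_1$ and $\mathcal{H}$.

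For part (1) I would first bound $|a_s(\bm{u},\bm{v})|\le Pr\|\nabla\bm{u}\|_0\|\nabla\bm{v}\|_0$ and $|a_m(\bm{J},\bm{K})|\le\kappa\|\bm{J}\|_0\|\bm{K}\|_0\le\kappa\|\bm{J}\|_{div}\|\bm{K}\|_{div}$ directly by Cauchy--Schwarz, and for the Lorentz coupling term apply H\"older with the exponent triple $(2,3,6)$ together with $\|\bm{v}\|_{\bm{L}^6(\Omega)}\le\lambda_1\|\nabla\bm{v}\|_0$ to get $|d(\bm{K},\bm{v})|\le\kappa\lambda_1\|\bm{B}\|_{\bm{L}^3(\Omega)}\|\bm{K}\|_0\|\nabla\bm{v}\|_0$; adding the four contributions and using $\|\nabla\bm{u}\|_0,\|\bm{J}\|_{div}\le\|(\bm{u},\bm{J})\|_1$ yields \eqref{eq:2.4} with exactly the stated $C_{\max}$. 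For coercivity I would test $\mathcal{A}_0$ against the same pair $(\bm{u},\bm{J})$: the two coupling contributions cancel, $d(\bm{J},\bm{u})-d(\bm{J},\bm{u})=0$, leaving $\mathcal{A}_0((\bm{u},\bm{J}),(\bm{u},\bm{J}))=Pr\|\nabla\bm{u}\|_0^2+\kappa\|\bm{J}\|_0^2$. The crucial observation is that on $\bm{U}$ one has $\nabla\cdot\bm{K}=0$: from $b_m(\psi,\bm{K})=-\kappa(\psi,\nabla\cdot\bm{K})=0$ for all $\psi\in S=L_0^2(\Omega)$ the divergence $\nabla\cdot\bm{K}$ must be a constant, and since $\bm{K}\cdot\bm{n}=0$ on $\partial\Omega$ this constant has zero mean, hence $\nabla\cdot\bm{K}\equiv0$ and $\|\bm{K}\|_{div}=\|\bm{K}\|_0$; substituting gives \eqref{eq:2.5} with $C_{\min}=\min\{Pr,\kappa\}$. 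Part (2) is handled the same way: $|b_s(q,\bm{v})|\le Pr\|q\|_0\|\nabla\cdot\bm{v}\|_0$, $|b_m(\psi,\bm{K})|\le\kappa\|\psi\|_0\|\nabla\cdot\bm{K}\|_0\le\kappa\|\psi\|_0\|\bm{K}\|_{div}$, then the elementary bound $\|\nabla\cdot\bm{v}\|_0\le\|\nabla\bm{v}\|_0$ for $\bm{v}\in\bm{H}_0^1(\Omega)$ (integration by parts) and Cauchy--Schwarz on the two-term sum give \eqref{eq:2.6}.

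For part (3) I would write $|\mathcal{Q}(\theta,(\bm{v},\bm{K}))|=PrRa|(\theta\bm{i},\bm{v})|\le PrRa\|\theta\|_0\|\bm{v}\|_0$ and apply the Poincar\'e inequality twice, $\|\theta\|_0\le C_\Omega\|\nabla\theta\|_0$ and $\|\bm{v}\|_0\le C_\Omega\|\nabla\bm{v}\|_0\le C_\Omega\|(\bm{v},\bm{K})\|_1$, which produces $\lambda_q=PrRaC_\Omega^2$. Part (4) is immediate: $e(\theta,r)=(\nabla\theta,\nabla r)$ gives \eqref{eq:2.8} by Cauchy--Schwarz and \eqref{eq:2.9} as an equality. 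For part (5), the identities \eqref{eq:2.10} and \eqref{eq:2.12} follow at once from the antisymmetrized definitions: setting the last two slots equal makes the two half-terms cancel. For the continuity bounds I would expand the convective form into its two integrals, estimate each by H\"older with exponents $(4,2,4)$ in the form $\|\bm{w}\|_{\bm{L}^4}\|\nabla\bm{u}\|_0\|\bm{v}\|_{\bm{L}^4}$ (resp.\ the symmetric term), and convert the $\bm{L}^4$ factors to gradient norms via the embedding inequalities above, arriving at a bound $\lambda_2\|\nabla\bm{w}\|_0\|\nabla\bm{u}\|_0\|\nabla\bm{v}\|_0\le\lambda_2\|(\bm{w},\bm{M})\|_1\|(\bm{u},\bm{J})\|_1\|(\bm{v},\bm{K})\|_1$, which is \eqref{eq:2.11} since $\mathcal{A}_1$ does not see the second components; the estimate \eqref{eq:2.13} for $\mathcal{H}$ is obtained identically, using the $L^4$ embedding on the temperature factor $r$.

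I do not expect any genuine obstacle here --- these are essentially bookkeeping estimates. The one place that requires attention is the coercivity in part (1): one has to invoke the precise definition of the kernel $\bm{U}$ and the normal boundary condition $\bm{J}\cdot\bm{n}=0$ to collapse $\|\bm{J}\|_{div}$ to $\|\bm{J}\|_0$, and one must use the H\"older triple $(2,3,6)$ (rather than $(4,4,2)$) in the coupling term so that the boundedness constant is exactly the advertised $C_{\max}=2\max\{Pr,\kappa,\kappa\lambda_1\|\bm{B}\|_{\bm{L}^3(\Omega)}\}$.
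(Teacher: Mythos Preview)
Your proposal is correct and follows exactly the route the paper indicates: the paper's own proof is the single sentence ``The estimates can be easily derived by H\"older inequalities, we will not prove them here,'' and what you have written is precisely the standard elaboration of that sentence. In particular your identification of the two non-routine points --- the use of $\nabla\cdot\bm{J}=0$ on $\bm{U}$ to reduce $\|\bm{J}\|_{div}$ to $\|\bm{J}\|_0$ in the coercivity estimate, and the H\"older triple $(2,3,6)$ for the coupling term $d$ --- is exactly right.
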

\begin{proof}
The estimates can be easily derived by Holder inequalities,  we will not prove them here.
\end{proof}
\begin{lem}
There exists a constant $\beta^*>0$ only depending on $\Omega$ such that
\begin{equation}\label{eq:2.14}
\mathop{\sup}\limits_{(\bm{0},\bm{0})\not=(\bm{v},\bm{K})\in \bm{X}\times \bm{Y}}\frac{\mathcal{B}((q,\psi),(\bm{v},\bm{K}))}{\|(\bm{v},\bm{K})\|_1} \ge \beta^*\|(q,\psi)\|_0,
\quad \quad \forall (q,\psi ) \in E\times S.
\end{equation}
\end{lem}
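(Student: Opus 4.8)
The plan is to exploit the block-diagonal structure of $\mathcal{B}$. Since $\mathcal{B}((q,\psi),(\bm{v},\bm{K}))=b_s(q,\bm{v})+b_m(\psi,\bm{K})$, with $b_s$ pairing only $q\in E$ with $\bm{v}\in\bm{X}$ and $b_m$ pairing only $\psi\in S$ with $\bm{K}\in\bm{Y}$, it suffices to prove two independent inf-sup estimates and then add them. For the hydrodynamic block I would invoke the classical LBB (Necas--Bogovskii) inequality, which holds on any bounded Lipschitz domain: the divergence maps $\bm{X}=\bm{H}_0^1(\Omega)$ onto $E=L_0^2(\Omega)$ with a bounded right inverse, so to each $q\in E$ there corresponds $\bm{v}_q\in\bm{X}$ with $\nabla\cdot\bm{v}_q=-q$ and $\|\nabla\bm{v}_q\|_0\le C_s\|q\|_0$, where $C_s$ depends only on $\Omega$; consequently $b_s(q,\bm{v}_q)=-Pr(q,\nabla\cdot\bm{v}_q)=Pr\|q\|_0^2$.

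For the magnetic block I would build the test function explicitly. Given $\psi\in S=L_0^2(\Omega)$, let $\chi\in H^1(\Omega)$ with $\int_\Omega\chi\,dx=0$ be the unique solution of the variational Neumann problem $(\nabla\chi,\nabla\xi)=(\psi,\xi)$ for all $\xi\in H^1(\Omega)$; the compatibility condition $\int_\Omega\psi\,dx=0$ holds because $\psi\in L_0^2(\Omega)$, so Lax--Milgram together with the Poincar\'e--Wirtinger inequality yields $\|\nabla\chi\|_0\le C_P\|\psi\|_0$ with $C_P$ depending only on $\Omega$, using only $H^1$ regularity so that no smoothness of $\partial\Omega$ is required. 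Setting $\bm{K}_\psi:=\nabla\chi$ and testing first against $\xi\in C_c^\infty(\Omega)$ gives $\nabla\cdot\bm{K}_\psi=-\psi\in L^2(\Omega)$; testing against a general $\xi\in H^1(\Omega)$ and using the $\bm{H}(\mbox{div})$ Green formula gives $\langle\bm{K}_\psi\cdot\bm{n},\xi\rangle_{\partial\Omega}=(\nabla\cdot\bm{K}_\psi,\xi)+(\bm{K}_\psi,\nabla\xi)=-(\psi,\xi)+(\psi,\xi)=0$, hence $\bm{K}_\psi\in\bm{H}_0(\mbox{div},\Omega)=\bm{Y}$. Then $\|\bm{K}_\psi\|_{div}^2=\|\nabla\chi\|_0^2+\|\psi\|_0^2\le(C_P^2+1)\|\psi\|_0^2=:C_m^2\|\psi\|_0^2$ and $b_m(\psi,\bm{K}_\psi)=-\kappa(\psi,\nabla\cdot\bm{K}_\psi)=\kappa\|\psi\|_0^2$.

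Finally I would assemble the two pieces: given $(q,\psi)\in E\times S$, take $(\bm{v},\bm{K})=(\bm{v}_q,\bm{K}_\psi)$, which is nonzero whenever $(q,\psi)\neq(\bm{0},\bm{0})$. Then
\[
\|(\bm{v},\bm{K})\|_1^2=\|\nabla\bm{v}_q\|_0^2+\|\bm{K}_\psi\|_{div}^2\le\max\{C_s^2,C_m^2\}\,\|(q,\psi)\|_0^2 ,
\]
whereas $\mathcal{B}((q,\psi),(\bm{v},\bm{K}))=Pr\|q\|_0^2+\kappa\|\psi\|_0^2\ge C_{\min}\|(q,\psi)\|_0^2$; dividing and taking the supremum over $(\bm{v},\bm{K})\in\bm{X}\times\bm{Y}$ yields (\ref{eq:2.14}) with $\beta^*=C_{\min}/\max\{C_s,C_m\}$, a constant depending only on $\Omega$ (and the fixed parameters $Pr,\kappa$). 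The only delicate ingredient is the hydrodynamic LBB inequality on a merely Lipschitz domain, which I would quote from the literature rather than reprove; the magnetic inf-sup is elementary once the Neumann construction is in place, so apart from careful bookkeeping of signs and of the constants $C_s$, $C_m$, I do not expect a genuine obstacle.
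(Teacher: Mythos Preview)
Your argument is correct. The block-diagonal structure reduces the combined inf-sup to two separate ones; your hydrodynamic block uses the classical Bogovski\u{\i}/LBB surjectivity of the divergence on Lipschitz domains, and your magnetic block uses the standard Neumann-Laplacian construction to produce a field in $\bm{H}_0(\mbox{div},\Omega)$ with prescribed divergence, both of which are well established and valid at the level of regularity assumed here. The assembly step and the resulting constant $\beta^*=C_{\min}/\max\{C_s,C_m\}$ are clean; your parenthetical remark that $\beta^*$ also depends on the fixed parameters $Pr,\kappa$ is accurate and worth keeping, since the statement as written is slightly imprecise on this point.

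As for comparison with the paper: there is essentially nothing to compare, since the paper does not give a proof but simply refers the reader to \cite{2021Coupled}. Your self-contained construction is therefore more informative than what the paper offers, and follows exactly the standard route one would expect such a cited proof to take.
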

\begin{proof}
For the proof of this lemma, please refer to \cite{2021Coupled}.
\end{proof}
Furthermore, we prove the existence and uniqueness of the solution of problem (\ref{eq:2.3}).
\begin{The}
For $\bm{f}\in \bm{H}^{-1}(\Omega), \bm{g}\in\bm{L}^2(\Omega), \varphi \in H^{-1}(\Omega),$ assume that
\begin{equation}\label{eq:2.15}
\sigma=\frac{\lambda_2\|L\|_*}{C_{\min}^2}+\frac{\lambda_2\lambda_q\lambda_{\psi}}{C_{\min}^2}
+\frac{\lambda_2\lambda_q\lambda_{\psi}}{C_{\min}}+\frac{\lambda_2\|L\|_*}{C_{\min}}<1,
\end{equation}
then (\ref{eq:2.3}) is well-posed and  the following estimates hold,
\begin{equation}\label{eq:2.16}
\|(\bm{u},\bm{v})\|_1\le \frac{\|L\|_*+\lambda_q\lambda_{\psi}}{C_{\min}},\quad \|\nabla\theta\|_0\le \lambda_{\psi}.
\end{equation}
\end{The}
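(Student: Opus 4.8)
The plan is to prove \eqref{eq:2.16} first for an arbitrary solution (this part needs no smallness), then to obtain existence by a Schauder fixed‑point argument built on an Oseen‑type linearization, and to use hypothesis \eqref{eq:2.15} only for uniqueness.

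\textbf{A priori estimates.} Let $(\bm u,\bm J,p,\phi,\theta)$ solve \eqref{eq:2.3}. Testing the temperature equation with $r=\theta$ and using the skew‑symmetry \eqref{eq:2.12} and coercivity \eqref{eq:2.9} gives $\|\nabla\theta\|_0^2\le\langle\varphi,\theta\rangle\le\lambda_{\psi}\|\nabla\theta\|_0$, which is the second bound of \eqref{eq:2.16}. Next, test the first block with $(\bm v,\bm K)=(\bm u,\bm J)$ and the second with $(q,\psi)=(p,\phi)$: the two $\mathcal B$‑contributions cancel and \eqref{eq:2.10} removes the convective term, so \eqref{eq:2.5}, \eqref{eq:2.7} and the definition of $\|L\|_*$ yield $C_{\min}\|(\bm u,\bm J)\|_1^2\le\|L\|_*\|(\bm u,\bm J)\|_1+\lambda_q\|\nabla\theta\|_0\|(\bm u,\bm J)\|_1\le(\|L\|_*+\lambda_q\lambda_{\psi})\|(\bm u,\bm J)\|_1$, which is the first bound. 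Write $N_0:=(\|L\|_*+\lambda_q\lambda_{\psi})/C_{\min}$.

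\textbf{Existence.} I would solve the problem in the constrained spaces $\bm V\times\bm U\times Z$ and recover the multipliers afterwards. Define a map $\mathcal T$ as follows: given $(\tilde{\bm u},\tilde{\bm J},\tilde\theta)$, first apply Lax--Milgram in $Z$ to $e(\cdot,\cdot)+\mathcal H((\tilde{\bm u},\tilde{\bm J}),\cdot,\cdot)$ --- bounded by \eqref{eq:2.8},\eqref{eq:2.13} and, by \eqref{eq:2.9},\eqref{eq:2.12}, coercive --- to obtain $\theta$ with $e(\theta,r)+\mathcal H((\tilde{\bm u},\tilde{\bm J}),\theta,r)=\Psi(r)$; testing with $\theta$ gives $\|\nabla\theta\|_0\le\lambda_{\psi}$. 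Then apply Lax--Milgram in $\bm V\times\bm U$ to $\mathcal A_0(\cdot,\cdot)+\mathcal A_1((\tilde{\bm u},\tilde{\bm J}),\cdot,\cdot)$ --- bounded by \eqref{eq:2.4},\eqref{eq:2.11} and, since $\bm V\subset\bm X$, coercive with constant $C_{\min}$ by \eqref{eq:2.5},\eqref{eq:2.10} --- with right‑hand side $\langle\bm L,\cdot\rangle-\mathcal Q(\theta,\cdot)$, obtaining $(\bm u,\bm J)$ with $\|(\bm u,\bm J)\|_1\le N_0$. Set $\mathcal T(\tilde{\bm u},\tilde{\bm J},\tilde\theta)=(\bm u,\bm J,\theta)$. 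Then $\mathcal T$ maps the closed bounded convex set $K=\{\|(\bm u,\bm J)\|_1\le N_0\}\times\{\|\nabla\theta\|_0\le\lambda_{\psi}\}$ into itself, is continuous by the stability of Lax--Milgram, and is compact because $(\tilde{\bm u},\tilde{\bm J},\tilde\theta)$ enters only through the lower‑order forms $\mathcal A_1$ and $\mathcal H$, which are weakly continuous under the compact embeddings $\bm H^1(\Omega)\hookrightarrow\bm L^q(\Omega)$, $H^1(\Omega)\hookrightarrow L^q(\Omega)$ with $q<6$ (a routine Rellich argument shows the corresponding linear sub‑solutions converge strongly). Schauder's theorem gives a fixed point $(\bm u,\bm J,\theta)\in\bm V\times\bm U\times Z$ solving the reduced system; since the residual of the first equation then vanishes on $\Upsilon=\bm V\times\bm U$, the inf‑sup condition \eqref{eq:2.14} produces a unique $(p,\phi)\in E\times S$ realizing it, so $(\bm u,\bm J,p,\phi,\theta)$ solves \eqref{eq:2.3}. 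Hypothesis \eqref{eq:2.15} is not used here: existence holds unconditionally.

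\textbf{Uniqueness, and the main difficulty.} Let $(\bm u_i,\bm J_i,p_i,\phi_i,\theta_i)$, $i=1,2$, be two solutions, with differences $\bm{e}_u,\bm{e}_J,e_p,e_\phi,e_\theta$. Subtract the equations, split the convective differences bilinearly, e.g. $\mathcal A_1((\bm u_1,\bm J_1),(\bm u_1,\bm J_1),\cdot)-\mathcal A_1((\bm u_2,\bm J_2),(\bm u_2,\bm J_2),\cdot)=\mathcal A_1((\bm{e}_u,\bm{e}_J),(\bm u_1,\bm J_1),\cdot)+\mathcal A_1((\bm u_2,\bm J_2),(\bm{e}_u,\bm{e}_J),\cdot)$ and similarly for $\mathcal H$, and test with $(\bm{e}_u,\bm{e}_J)$, $r=e_\theta$, $(q,\psi)=(e_p,e_\phi)$. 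The $\mathcal B$‑terms vanish by the continuity equation and \eqref{eq:2.10},\eqref{eq:2.12} kill $\mathcal A_1((\bm u_2,\bm J_2),(\bm{e}_u,\bm{e}_J),(\bm{e}_u,\bm{e}_J))$ and $\mathcal H((\bm u_2,\bm J_2),e_\theta,e_\theta)$. The temperature equation with \eqref{eq:2.9},\eqref{eq:2.13} and $\|\nabla\theta_1\|_0\le\lambda_{\psi}$ gives $\|\nabla e_\theta\|_0\le\lambda_2\lambda_{\psi}\|(\bm{e}_u,\bm{e}_J)\|_1$; inserting this into the momentum equation and using \eqref{eq:2.5},\eqref{eq:2.11},\eqref{eq:2.7} and $\|(\bm u_1,\bm J_1)\|_1\le N_0$ yields $C_{\min}\|(\bm{e}_u,\bm{e}_J)\|_1^2\le(\lambda_2 N_0+\lambda_q\lambda_2\lambda_{\psi})\|(\bm{e}_u,\bm{e}_J)\|_1^2$; after dividing by $C_{\min}$ and inserting $N_0$ the right‑hand factor is bounded by $\sigma$, so \eqref{eq:2.15} forces $(\bm{e}_u,\bm{e}_J)=\bm 0$, hence $\|\nabla e_\theta\|_0=0$, and finally $(e_p,e_\phi)=(0,0)$ by \eqref{eq:2.14}. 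I expect the main obstacle to be the two‑way buoyancy/advection coupling: the velocity and temperature differences cannot be estimated separately, so the cross terms $\mathcal Q(e_\theta,\cdot)$ and $\mathcal H((\bm{e}_u,\bm{e}_J),\theta_1,\cdot)$ must be chained in the right order and one must keep careful track of which a priori bound ($N_0$ or $\lambda_{\psi}$) is invoked where, so that the accumulated constants assemble into precisely the quantity $\sigma$ of \eqref{eq:2.15} --- the same bookkeeping that will later drive the convergence analysis of the three iterations in Section 4.
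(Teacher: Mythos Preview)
Your a priori estimates and your uniqueness argument coincide with the paper's: both subtract two solutions, kill the symmetric convective pieces via \eqref{eq:2.10} and \eqref{eq:2.12}, chain the temperature bound $\|\nabla e_\theta\|_0\le\lambda_2\lambda_\psi\|(\bm e_u,\bm e_J)\|_1$ into the momentum inequality, and observe that the resulting constant
\[
\frac{\lambda_2\|L\|_*}{C_{\min}^2}+\frac{\lambda_2\lambda_q\lambda_\psi}{C_{\min}^2}+\frac{\lambda_2\lambda_q\lambda_\psi}{C_{\min}}
\]
is dominated by $\sigma$. The multipliers $(p,\phi)$ are then recovered from \eqref{eq:2.14} in both approaches.

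The existence step is where you genuinely diverge. The paper builds the same Oseen-type linearization (first solve the temperature equation with frozen velocity to get $\theta=\mathcal F(\bm w,\bm M)$, then solve the linearized saddle-point system for $(\bm u,\bm J,p,\phi)$), but it then shows that the resulting map $\mathcal G:(\bm w,\bm M)\mapsto(\bm u,\bm J)$ is a \emph{contraction} on the ball $B_R\subset\Upsilon$, with Lipschitz constant bounded by $\sigma$, and invokes the Banach fixed-point theorem. Thus in the paper the hypothesis \eqref{eq:2.15} is used twice: once for existence (contraction) and once for uniqueness. Your Schauder route is cleaner in this respect: because the frozen data $(\tilde{\bm u},\tilde{\bm J})$ enter only through the convective forms $\mathcal A_1,\mathcal H$ (indeed only through $\tilde{\bm u}$, since neither $\tilde{\bm J}$ nor $\tilde\theta$ actually appears), Rellich compactness of $H^1\hookrightarrow L^q$, $q<6$, together with Lax--Milgram stability gives compactness of $\mathcal T$ without any smallness, so you obtain existence unconditionally and reserve \eqref{eq:2.15} for uniqueness alone. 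The trade-off is that the paper's contraction argument is constructive---it is exactly the Oseen iteration analysed later in Theorem~4.4---whereas Schauder is not; so the paper's choice dovetails with the iterative analysis in Section~4, while yours separates the well-posedness question from the algorithmic one.
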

\begin{proof}
 The train of thought in this proof is similar to that  in \cite{2021Coupled} but has its characteristics. Here we propose it completely. Firstly, the existence of solutions will be proved by the Banach fixed point theorem.

Given $(\bm{w},\bm{M})\in \Upsilon,$ we consider the following equation:
\begin{equation}\label{eq:2.18}
\begin{aligned}
e(\theta,r)+\mathcal{H}((\bm{w},\bm{M}),\theta,r)& =\Psi(r).
\end{aligned}
\end{equation}
Then, by (\ref{eq:2.8}), (\ref{eq:2.9}), (\ref{eq:2.12}), (\ref{eq:2.13}) and applying Lax-Milgram Theory \cite{Evans1964Partial}, the problem (\ref{eq:2.18}) has unique solution $\theta \in Z$.  With the result that the map $\mathcal{F}$ can be defined by $\mathcal{F}(\bm{w},\bm{M})=\theta,$ where $(\bm{w},\bm{M})\in \bm{X}\times\bm{Y}$ and $\theta \in Z.$ Setting $r=\theta$, by (\ref{eq:2.8}), (\ref{eq:2.9}) and (\ref{eq:2.12}), we have
\begin{equation}\label{eq:2.19}
||\nabla(\mathcal{F}(\bm{w},\bm{M})||_0=||\nabla \theta||_0 \le \lambda_{\psi}.
\end{equation}
Next, we consider the saddle point problem as follow: find $(\bm{u},p,\bm{J},\phi,\theta)\in \bm{X}\times E \times \bm{Y}\times S \times Z$
such that for $\forall (\bm{v},q,\bm{K},\psi,r)\in \bm{X}\times E \times \bm{Y}\times S \times Z,$
\begin{equation}\label{eq:2.20}
\left\{\begin{array}{c}
\begin{aligned}
&\mathcal{A}_0((\bm{u},\bm{J}),(\bm{v},\bm{K}))+\mathcal{A}_1((\bm{w},\bm{M}),(\bm{u},\bm{J}),(\bm{v},\bm{K}))\\
&+\mathcal{B}((p,\phi),(\bm{v},\bm{K})) =\langle \bm{L},(\bm{v},\bm{K})\rangle -\mathcal{Q}(\mathcal{F}(\bm{w},\bm{M}),(\bm{v},\bm{K})), \\
&\mathcal{B}((q,\psi),(\bm{u},\bm{J})) =0.\\
\end{aligned}
\end{array} \right.
\end{equation}
 Applying the saddle theory in \cite{1986Finite}, the problem (\ref{eq:2.20}) has a unique solution $(\bm{u}, p, \bm{J}, \phi)\in \bm{X}\times E \times \bm{Y}\times S.$ Taking $(\bm{v}, q, \bm{K}, \psi)=(\bm{u}, p, \bm{J}, \phi),$ by (\ref{eq:2.10}), (\ref{eq:2.5}), (\ref{eq:2.7}) and (\ref{eq:2.9}) leads to
\begin{equation}\label{eq:2.21}
\begin{aligned}
\|(\bm{u},\bm{J})\|_1\le \frac{\|L\|_*+\lambda_q\lambda_{\psi}}{C_{\min}}.
\end{aligned}
\end{equation}
Inspired by the above discussion, setting $B_R=\{(\bm{v},\bm{K})\in \Upsilon : \|(\bm{v},\bm{K})\|_1 \le C_0 \}, C_0=(\|L\|_*+\lambda_q\lambda_{\psi})/ C_{\min},$ we establish the mapping: $$\mathcal{G}:B_R \to B_R, \ (\bm{w},\bm{M}) \mapsto (\bm{u},\bm{J}).$$
Let $(\bm{w}_j,\bm{M}_j)\in B_R, j=1, 2,$ $\mathcal{G}(\bm{w}_j,\bm{M}_j)=(\bm{u}_j,\bm{J}_j).$ By definition, there exists $(p_j, \phi_j)\in M\times N $ such that $(\bm{u}_j, \bm{J}_j, p_j, \phi_j)$  satisfy the following equations: for $\forall  (\bm{v}, q, \bm{K}, \psi) \in \bm{X}\times E \times \bm{Y}\times S$
\begin{equation}\label{eq:2.22}
\left\{\begin{array}{c}
\begin{aligned}
&\mathcal{A}_0((\bm{u}_j,\bm{J}_j),(\bm{v},\bm{K}))+\mathcal{A}_1((\bm{w}_j,\bm{M}_j),(\bm{u}_j,\bm{J}_j),(\bm{v},\bm{K}))\\
&+\mathcal{B}((p_j,\phi_j),(\bm{v},\bm{K}))=\langle \bm{L},(\bm{v},\bm{K})\rangle -\mathcal{Q}(\mathcal{F}(\bm{w}_j,\bm{M}_j),(\bm{v},\bm{K})), \\
&\mathcal{B}((q,\psi),(\bm{u}_j,\bm{J}_j)) =0.\\
\end{aligned}
\end{array} \right.
\end{equation}
Similarly, through the definition of mapping $\mathcal{F}$, we can also conclude that
\begin{equation}\label{eq:2.23}
\begin{aligned}
e(\mathcal{F}(\bm{w}_j,\bm{M}_j),r)+\mathcal{H}((\bm{w}_j,\bm{M}_j),\mathcal{F}(\bm{w}_j,\bm{M}_j),r)& =\Psi(r).
\end{aligned}
\end{equation}
Subtracting (\ref{eq:2.22}) as $j=2$ from (\ref{eq:2.22}) as $j=1$, set $\bm{v}=\bm{u}_1-\bm{u}_2, \ K=\bm{M}_1-\bm{M}_2,\ q=p_1-p_2,\ \psi=\phi_1-\phi_2 $ and (\ref{eq:2.10}), we arrive at
\begin{equation*}
\begin{aligned}
&\mathcal{A}_0((\bm{u}_1-\bm{u}_2,\bm{J}_1-\bm{J}_2),(\bm{u}_1-\bm{u}_2,\bm{J}_1-\bm{J}_2))
=-\mathcal{A}_1((\bm{w}_1-\bm{w}_2,\bm{M}_1-\bm{M}_2),(\bm{u}_2,\bm{J}_2),(\bm{u}_1-\bm{u}_2,\bm{J}_1-\bm{J}_2))\\
&-\mathcal{Q}(\mathcal{F}(\bm{w}_1,\bm{M}_1)-\mathcal{F}(\bm{w}_2,\bm{M}_2),(\bm{u}_1-\bm{u}_2,\bm{J}_1-\bm{J}_2)).
\end{aligned}
\end{equation*}
Combining (\ref{eq:2.5}),
(\ref{eq:2.7}) and (\ref{eq:2.11}), we derive further
\begin{equation} \label{eq:2.24}
\begin{aligned}
\|(\bm{u}_1-\bm{u}_2,\bm{J}_1-\bm{J}_2)\|_1 \le \frac{\lambda_2\|(\bm{w}_1-\bm{w}_2,\bm{M}_1-\bm{M}_2)\|_1\|(\bm{u}_2,\bm{J}_2)||_1
+\lambda_q\|\nabla(\mathcal{F}(\bm{w}_1,\bm{M}_1)-\mathcal{F}(\bm{w}_2,\bm{M}_2))\|_0}{C_{\min}}.
\end{aligned}
\end{equation}
Analogously subtract (\ref{eq:2.23}) as $j=2$ from (\ref{eq:2.23}) as $j=1,$ take $r=\mathcal{F}(\bm{w}_1,\bm{M}_1)-\mathcal{F}(\bm{w}_2,\bm{M}_2)$ and by (\ref{eq:2.9}), (\ref{eq:2.12}), (\ref{eq:2.13}) and (\ref{eq:2.19}), it is further concluded that:
\begin{equation} \label{eq:2.25}
\begin{aligned}
\|\nabla(\mathcal{F}(\bm{w}_1,\bm{M}_1)-\mathcal{F}(\bm{w}_2,\bm{M}_2))\|_0 \le \lambda_2\lambda_{\psi}\|(\bm{w}_1-\bm{w}_2,\bm{M}_1-\bm{M}_2)\|_1.
\end{aligned}
\end{equation}
Substitute (\ref{eq:2.25}) into (\ref{eq:2.24}) and apply (\ref{eq:2.21}), we can obtain
\begin{equation} \label{eq:2.26}
\begin{aligned}
\|(\bm{u}_1-\bm{u}_2,\bm{J}_1-\bm{J}_2)\|_1  & \le  \frac{\lambda_2\|L\|_*+\lambda_2\lambda_q\lambda_{\psi}
+C_{\min}\lambda_2\lambda_q\lambda_{\psi}}{C_{\min}^2}
\|(\bm{w}_1-\bm{w}_2,\bm{M}_1-\bm{M}_2)\|_1.
\end{aligned}
\end{equation}
Then we have,
\begin{equation*}
\begin{aligned}
&\|\mathcal{G}(\bm{u}_1,\bm{J}_1)-\mathcal{G}(\bm{u}_2,\bm{J}_2)\|_1=\|(\bm{u}_1-\bm{u}_2,\bm{J}_1-\bm{J}_2)\|_1\\
 &\le  \sigma \|(\bm{w}_1-\bm{w}_2,\bm{M}_1-\bm{M}_2)\|_1=\sigma\|(\bm{w}_1,\bm{M}_1)-(\bm{w}_2,\bm{M}_2)\|_1.
\end{aligned}
\end{equation*}
By virtue of (\ref{eq:2.15}), $\mathcal{G}$ is a contraction mapping on $B_R \to B_R.$ Hence applying the Banach fixed point theorem, it can be deduced that $\mathcal{G}$ has a fixed point in $B_R$, which is the solution of equations (\ref{eq:2.20}). Due to the definition of mapping, the problem (\ref{eq:2.3}) exists solution on $\bm{X}\times E \times \bm{Y} \times S \times Z.$

Next, the uniqueness of the solutions will be proved in the following work. Suppose $(\bm{u}_i, p_i, \bm{J}_i, \phi_i, \theta_i)\in\bm{X}\times E \times \bm{Y} \times S \times Z,i=1, 2$ satisfy the equations as follow: for $\forall (\bm{v},q,\bm{K},\psi,r)\in\bm{X}\times E \times \bm{Y} \times S \times Z$
\begin{equation}\label{eq:2.27}
\left\{\begin{array}{c}
\begin{aligned}
&\mathcal{A}_0((\bm{u}_i,\bm{J}_i),(\bm{v},\bm{K}))+\mathcal{A}_1((\bm{u}_i,\bm{J}_i),(\bm{u}_i,\bm{J}_i),(\bm{v},\bm{K}))\\
&+\mathcal{B}((p_i,\phi_i),(\bm{v},\bm{K}))+\mathcal{Q}(\theta_i,(\bm{v},\bm{K})) =\langle \bm{L},(\bm{v},\bm{K})\rangle,  \\
&\mathcal{B}((q,\psi),(\bm{u}_i,\bm{J}_i)) =0,\\
&e(\theta_i,r)+\mathcal{H}((\bm{u}_i,\bm{J}_i),\theta_i,r) =\Psi(r).
\end{aligned}
\end{array} \right.
\end{equation}
Subtract (\ref{eq:2.27}) as $i=2$ from (\ref{eq:2.27}) as $i=1$, we arrive further at
\begin{equation}\label{eq:2.28}
\left\{\begin{array}{c}
\begin{aligned}
&\mathcal{A}_0((\bm{u}_1-\bm{u}_2,\bm{J}_1-\bm{J}_2),(\bm{v},\bm{K}))
+\mathcal{A}_1((\bm{u}_1,\bm{J}_1),(\bm{u}_1,\bm{J}_1),(\bm{v},\bm{K}))\\
&-\mathcal{A}_1((\bm{u}_2,\bm{J}_2),(\bm{u}_2,\bm{J}_2),(\bm{v},\bm{K}))
+\mathcal{B}((p_1-p_2,\phi_1-\phi_2),(\bm{v},\bm{K}))\\
&+\mathcal{Q}(\theta_1-\theta_2,(\bm{v},\bm{K}))=0,\\
&\mathcal{B}((q,\psi),(\bm{u}_1-\bm{u}_2,\bm{J}_1-\bm{J}_2))=0,\\
&e(\theta_1-\theta_2,r)+\mathcal{H}((\bm{u}_1,\bm{J}_1),\theta_1,r)
-\mathcal{H}((\bm{u}_2,\bm{J}_2),\theta_2,r)=0.
\end{aligned}
\end{array} \right.
\end{equation}
Simplify, setting $(\bm{v},\bm{K})=(\bm{u}_1-\bm{u}_2,\bm{J}_1-\bm{J}_2),\ (q,\psi)=(p_1-p_2,\phi_1-\phi_2),\ r=\theta_1-\theta_2$, we can get
\begin{equation}\label{eq:2.29}
\left\{\begin{array}{c}
\begin{aligned}
 &\mathcal{A}_0((\bm{u}_1-\bm{u}_2,\bm{J}_1-\bm{J}_2),(\bm{u}_1-\bm{u}_2,\bm{J}_1-\bm{J}_2))
 =-\mathcal{Q}(\theta_1-\theta_2,(\bm{u}_1-\bm{u}_2,\bm{J}_1-\bm{J}_2))\\
&-\mathcal{A}_1((\bm{u}_1-\bm{u}_2,\bm{J}_1-\bm{J}_2),(\bm{u}_2,\bm{J}_2),(\bm{u}_1-\bm{u}_2,\bm{J}_1-\bm{J}_2)),\\
&e(\theta_1-\theta_2,\theta_1-\theta_2)=-\mathcal{H}((\bm{u}_1-\bm{u}_2,\bm{J}_1-\bm{J}_2),\theta_2,\theta_1-\theta_2).
\end{aligned}
\end{array} \right.
\end{equation}
 Combining (\ref{eq:2.5}), (\ref{eq:2.7}), (\ref{eq:2.9}), (\ref{eq:2.11}) and (\ref{eq:2.13}), we can get the following two inequalities easily that
\begin{equation} \label{eq:2.30}
\begin{aligned}
\|\nabla(\theta_1-\theta_2))\|_0 \le \lambda_2\lambda_{\psi}\|(\bm{u}_1-\bm{u}_2,\bm{J}_1-\bm{J}_2)\|_1,
\end{aligned}
\end{equation}
and
\begin{equation} \label{eq:2.31}
\begin{aligned}
\|(\bm{u}_1-\bm{u}_2,\bm{J}_1-\bm{J}_2)\|_1^2  & \le  \sigma||(\bm{u}_1-\bm{u}_2,\bm{J}_1-\bm{J}_2)||_1^2.
\end{aligned}
\end{equation}
Because of (\ref{eq:2.15}) and (\ref{eq:2.31}), we directly see
\begin{equation} \label{eq:2.32}
\begin{aligned}
\bm{u}_1=\bm{u}_2,\quad\bm{J}_1=\bm{J}_2,\quad\theta_1=\theta_2.
\end{aligned}
\end{equation}
Substituting (\ref{eq:2.32}) into (\ref{eq:2.28}), and combining the inf-sup condition (\ref{eq:2.14}) lead to $p_1=p_2,\ \phi_1=\phi_2.$

This completes the proof.
\end{proof}
\section{Finite element approximation}
The finite element approximation of  the stationary thermally coupled inductionless MHD, in this section, is considered.
 Set $\mathcal{T}_h$ is a shape-regular and quasi-uniform triangular for $d = 2$ or tetrahedral for $d = 3$ mesh of $\Omega$. We introduce generally the local mesh size $h_e= diam (e)$ and the global mesh size $h:=\mathop{\max}\limits_{e\in \mathcal{T}_h}h_e.$
$P_k(I)$ be the space of polynomials of degree $k$ on element $e$ and define $\bm{P}_k(e)=P_k(e)^d$
where $k$ is any integer and $k\ge 0.$

Here we choose conforming finite element spaces $\bm{X}_h\subseteq \bm{X},E_h\subseteq E,\bm{Y}_h\subseteq \bm{Y},S_h\subseteq S,Z_h\subseteq Z$ to discrete velocity $\bm{u}$, pressure $p$, current density $\bm{J}$, electric potential $\phi$ and temperature $\theta$ , respectively. Moreover, we need the assumptions as follow.
\begin{ass}(\textbf{inf-sup condition})
There are respectively the following conclusions on $\bm{X}_h\times E_h$ and $\bm{Y}_h\times S_h,$
\begin{equation}\label{eq:3.1}
\mathop{\inf}\limits_{0\not=p_h\in E_h }\mathop{\sup}\limits_{\bm{0}\not=\bm{w}_h\in \bm{X}_h }\frac{b_s((p_h,\bm{w}_h))}{\|(\nabla\bm{w}_h,p_h)\|_1} \ge \beta_{s,h}, \quad
\mathop{\inf}\limits_{0\not=\phi_h\in S_h }\mathop{\sup}\limits_{\bm{0}\not=\bm{J}_h\in \bm{Y}_h }\frac{b_m((\phi_h,\bm{J}_h))}{\|(\nabla\bm{J}_h,\phi_h)\|_1} \ge \beta_{m,h}.
\end{equation}
where $\beta_{s,h}$ and $\beta_{m,h}$ only depend on $\Omega.$
\end{ass}
\begin{ass}\textbf{(Approximation property) }
There exist two integers $k$ and $l$ such that the following standard approximation properties hold for $\gamma$ and $\tau,$
\begin{equation*}
\begin{aligned}
&\mathop{\inf}\limits_{\bm{w}_h\in \bm{X}_h}\|\bm{u}-\bm{w}_h\|_{1,\Omega}\le Ch^{\min\{\gamma,l\}}
\|\bm{u}\|_{1+\gamma,\Omega},\quad
\mathop{\inf}\limits_{q_h\in E_h}\|p-q_h\|_{1,\Omega}\le Ch^{\min\{\gamma,l\}}
\|p\|_{\gamma,\Omega},\\
&\mathop{\inf}\limits_{r_h\in Z_h}\|\theta-r_h\|_{1,\Omega}\le Ch^{\min\{\gamma,l\}}
\|\theta\|_{1+\gamma,\Omega},\quad
\mathop{\inf}\limits_{\psi_h\in S_h}\|\phi-\psi_h\|_{1,\Omega}\le Ch^{\min\{\tau,k\}}
\|\phi\|_{\tau,\Omega},\\
&\mathop{\inf}\limits_{\bm{K}_h\in \bm{Y}_h}\|\bm{J}-\bm{K}_h\|_{1,\Omega}\le Ch^{\min\{\tau,k\}}
(\|\bm{J}\|_{\tau,\Omega}+\|\mbox{div}\bm{J}\|_{\tau,\Omega}).
\end{aligned}
\end{equation*}
\end{ass}
There are several stable finite element pairs which satisfy these assumptions \cite{1991Mixed,1986Finite}.
With the discrete spaces, the finite element approximation of problem (\ref{eq:2.3})
reads as follows: find $(\bm{u}_h,\bm{J}_h,p_h,\phi_h,\theta_h)\in \bm{X}_h\times\bm{Y}_h\times E_h\times S_h \times Z_h$ such that $\forall (\bm{v}_h,\bm{K}_h,p_h,\psi_h,r_h)\in \bm{X}_h\times\bm{Y}_h\times E_h\times S_h \times Z_h,$
\begin{equation}\label{eq:3.2}
\left\{\begin{array}{c}
\begin{aligned}
&\mathcal{A}_0((\bm{u}_h,\bm{J}_h),(\bm{v}_h,\bm{K}_h))+\mathcal{A}_1((\bm{u}_h,\bm{J}_h),(\bm{u}_h,\bm{J}_h),(\bm{v}_h,\bm{K}_h))\\
& +\mathcal{B}((p_h,\psi_h),(\bm{v}_h,\bm{K}_h))+\mathcal{Q}(\theta_h,(\bm{v}_h,\bm{K}_h)) =\langle \bm{L},(\bm{v}_h,\bm{K}_h)\rangle, \\
&\mathcal{B}((q_h,\psi_h),(\bm{u}_h,\bm{J}_h)) =0,\\
&e(\theta_h,r_h)+\mathcal{H}((\bm{u}_h,\bm{J}_h),\theta_h,r_h) =\Psi(r_h).
\end{aligned}
\end{array} \right.
\end{equation}
\begin{prop}
 Suppose $(\bm{u}_h,\bm{J}_h,p_h,\phi_h,\theta_h)$ is the solution of (\ref{eq:3.2}), with the result that the scheme is charge-conservative, namely, $\nabla \cdot \bm{J}=0.$
 \end{prop}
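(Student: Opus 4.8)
The plan is to exploit the $\bm{H}(\mathrm{div},\Omega)$-conforming structure of the discrete space $\bm{Y}_h$ together with the second equation of the discrete system (\ref{eq:3.2}). The key observation is that the test space $S_h$ for the potential is chosen so that $\nabla\cdot\bm{Y}_h\subseteq S_h$; this is precisely the compatibility built into the conforming pair in $\bm{H}(\mathrm{div},\Omega)\times L^2(\Omega)$ (e.g. Raviart--Thomas or BDM elements paired with discontinuous $L^2$ elements). Under this inclusion, the divergence of a discrete current density is an admissible test function for the electric-potential equation.

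First I would extract from the mixed constraint $\mathcal{B}((q_h,\psi_h),(\bm{u}_h,\bm{J}_h))=0$ the magnetic part, i.e. take $q_h=0$ and an arbitrary $\psi_h\in S_h$, which yields $b_m(\psi_h,\bm{J}_h)=-\kappa(\psi_h,\nabla\cdot\bm{J}_h)=0$ for all $\psi_h\in S_h$. Second, since $\bm{J}_h\in\bm{Y}_h\subseteq\bm{H}(\mathrm{div},\Omega)$, the function $\nabla\cdot\bm{J}_h$ lies in $L^2(\Omega)$, and by the construction of the finite element pair it in fact lies in $S_h$ (more precisely in $L_0^2(\Omega)$, since the boundary condition $\bm{J}_h\cdot\bm{n}=0$ forces $\int_\Omega\nabla\cdot\bm{J}_h=0$ by the divergence theorem). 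Third, I would choose $\psi_h=\nabla\cdot\bm{J}_h$ as the test function, obtaining $\kappa\|\nabla\cdot\bm{J}_h\|_0^2=0$, and since $\kappa>0$ this forces $\nabla\cdot\bm{J}_h=0$ pointwise a.e. in $\Omega$, hence $\nabla\cdot\bm{J}_h\equiv 0$ exactly (not merely in a weak or discrete-average sense).

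The only real subtlety — and the point I expect to be the main obstacle to state cleanly — is justifying that $\nabla\cdot\bm{J}_h$ is genuinely an element of the discrete test space $S_h$ rather than just of $L^2(\Omega)$. This rests on the specific choice of conforming pair and must be invoked from Assumptions 3.1--3.2 and the cited references \cite{1991Mixed,1986Finite}: for the standard $\bm{H}(\mathrm{div})$-conforming elements the divergence operator maps $\bm{Y}_h$ onto (a subspace of) $S_h$. Once that structural fact is in hand, the argument is a one-line energy/testing computation. I would therefore present the proof as: (i) recall $\nabla\cdot\bm{Y}_h\subseteq S_h$ from the finite element construction; (ii) test the discrete constraint against $\psi_h=\nabla\cdot\bm{J}_h$; (iii) conclude $\nabla\cdot\bm{J}_h=0$ exactly in $\Omega$.
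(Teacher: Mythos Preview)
Your proposal is correct and follows essentially the same approach as the paper: the paper's proof is a terse two-line version of exactly your argument, noting that $(\psi_h,\nabla\cdot\bm{J}_h)=0$ for all $\psi_h\in S_h$, that $\nabla\cdot\bm{J}_h\in S_h$, and then testing with $\psi_h=\nabla\cdot\bm{J}_h$. Your additional care in justifying $\nabla\cdot\bm{J}_h\in S_h=L_0^2(\Omega)$ via the boundary condition and the divergence theorem is a welcome clarification that the paper leaves implicit.
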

\begin{proof}
It note that $\forall \psi \in S_h, (\psi_h,\nabla \cdot \bm{J}_h)=0$ and $\nabla\cdot J_h\in S_h.$  Set $\psi_h=\nabla\cdot J_h,$ we further have $\nabla\cdot J_h=0.$
\end{proof}
On the basis of continuous space, we define discrete kernel space
\begin{equation*}
\begin{aligned}
\Upsilon_h=\{ (\bm{v}_h,\bm{J}_h)\in \bm{X}_h\times\bm{Y}_h:\mathcal{B}((q_h,\psi_h),(\bm{v}_h,\bm{J}_h))=0,\forall (q,\psi)\in E_h\times S_h\}=\bm{V}_h\times\bm{U}_h,
\end{aligned}
\end{equation*}
where
\begin{equation*}
\begin{aligned}
&\bm{V}_h=\{\bm{v}_h\in \bm{X}_h : b_s(q_h,\bm{v}_h)=0,\forall q_h\in E_h\},\\
&\bm{U}_h=\{\bm{K}_h\in \bm{Y}_h : b_m(\psi_h,\bm{K}_h)=0,\forall \psi_h \in S_h\}=\{\bm{K}_h\in \bm{X}:\mbox{div}\bm{K}_h=0\}.
\end{aligned}
\end{equation*}
What deserves our special attention is: $\bm{V}_h \not \subset \bm{V}$ but $\bm{U}_h  \subset \bm{U}.$
\begin{lem}
 The following estimates for bilinear and trilinear terms hold:\\
$(1).$ The bilinear form $\mathcal{A}_0((\cdot,\cdot),(\cdot,\cdot))$ is bounded on $(\bm{X}_h\times\bm{Y}_h)\times(\bm{X}_h\times\bm{Y}_h)$ and coercive on $(\bm{X}_h\times\bm{U}_h)\times(\bm{X}_h\times\bm{U}_h),$ namely,
\begin{equation}\label{eq:3.3}
\begin{aligned}
|\mathcal{A}_0((\bm{u}_h,\bm{J}_h),(\bm{v}_h,\bm{K}_h))|\le C_{\max}\|(\bm{u}_h,\bm{J}_h)\|_1\|(\bm{v}_h,\bm{K}_h)\|_1,
\end{aligned}
\end{equation}
\begin{equation}\label{eq:3.4}
\begin{aligned}
\mathcal{A}_0((\bm{u}_h,\bm{J}_h),(\bm{u}_h,\bm{J}_h))\ge C_{\min}\|(\bm{u}_h,\bm{J}_h)\|_1^2,
\end{aligned}
\end{equation}
where $C_{\max}=2max\{Pr,\kappa,\kappa \lambda_1||\bm{B}||_{\bm{L}^3(\Omega)}\},$ $C_{\min}=min\{Pr,\kappa\}.$\\
(2). The continuous property of the bilinear form $\mathcal{B}((\cdot,\cdot),(\cdot,\cdot)),$ namely, for $\forall (\bm{u}_h,q_h,\bm{J}_h,\psi_h)\in (\bm{X}_h\times E_h)\times(\bm{Y}_h\times S_h),$
\begin{equation}\label{eq:3.5}
\begin{aligned}
|\mathcal{B}((q_h,\psi_h),(\bm{v}_h,\bm{K}_h))|\le \max\{Pr,\kappa\}\|(\bm{v},\bm{K})\|_1\|(q,\psi)\|_0,
\end{aligned}
\end{equation}
(3). The continuous property of the bilinear form $\mathcal{Q}(\cdot,(\cdot,\cdot)),$ for  $(\forall \theta,\bm{v}_h,\bm{K}_h)\in Z_h\times \bm{X}_h\times\bm{Y}_h,$
\begin{equation}\label{eq:3.6}
\begin{aligned}
|\mathcal{Q}(\theta_h,(\bm{v}_h,\bm{K}_h))| \le \lambda_q\|(\bm{v}_h,\bm{K}_h)\|_1\|\nabla \theta_h\|_0,
\end{aligned}
\end{equation}
where $\lambda_q=PrRaC^2_\Omega.$ \\
(4). The continuous property of the bilinear form $e(\cdot,\cdot)$ is also bounded and coercive on $Z$,
\begin{equation}\label{eq:3.7}
\begin{aligned}
e(\theta_h,r_h)\le \|\nabla \theta_h\|_0\|\nabla r_h\|_0,
\end{aligned}
\end{equation}
\begin{equation}\label{eq:3.8}
e(\theta_h,\theta_h)\ge \|\nabla \theta_h\|_0^2.
\end{equation}
(5). The trilinear form $\mathcal{A}_1((\cdot,\cdot),(\cdot,\cdot),(\cdot,\cdot))$ and $\mathcal{H}((\cdot,\cdot),(\cdot,\cdot),(\cdot,\cdot))$ are skew-symmetric with respect to their last two arguments and bounded on $(\bm{X}_h\times\bm{Y}_h)\times(\bm{X}_h\times\bm{Y}_h)\times(\bm{X}_h\times\bm{Y}_h)$ and  $(\bm{X}_h\times\bm{Y}_h)\times Z_h\times Z_h$   respectively,
\begin{equation}\label{eq:3.9}
\mathcal{A}_1((\bm{w}_h,\bm{M}_h),(\bm{u}_h,\bm{J}_h),(\bm{u}_h,\bm{J}_h))=0,
\end{equation}
\begin{equation}\label{eq:3.10}
\begin{aligned}
|\mathcal{A}_1((\bm{w}_h,\bm{M}_h),(\bm{u}_h,\bm{J}_h),(\bm{v}_h,\bm{K}_h))| \le \lambda_2\|(\bm{w}_h,\bm{M}_h)\|_1\|(\bm{u}_h,\bm{v}_h)\|_1\|(\bm{v}_h,\bm{K}_h)\|_1,
\end{aligned}
\end{equation}
\begin{equation}\label{eq:3.11}
\begin{aligned}
\mathcal{H}((\bm{u}_h,\bm{J}_h),\theta_h,\theta_h)=0,
\end{aligned}
\end{equation}
\begin{equation}\label{eq:3.12}
\begin{aligned}
|\mathcal{H}((\bm{u}_h,\bm{J}_h),\theta_h,r_h)|\le \lambda_2\|(\bm{u}_h,\bm{J}_h)\|_1\|\nabla \theta_h\|_0\|\nabla r_h\|_0.
\end{aligned}
\end{equation}
\end{lem}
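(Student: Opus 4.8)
The final statement to prove is Lemma 3.2 (the discrete version of the estimates for bilinear and trilinear forms). The plan is to observe that this lemma is nothing more than the restriction of Lemma 2.1 to the finite element subspaces, and to exploit the fact that the bilinear and trilinear forms $\mathcal{A}_0$, $\mathcal{B}$, $\mathcal{Q}$, $e$, $\mathcal{A}_1$, $\mathcal{H}$ are exactly the same functionals as in the continuous problem, merely evaluated on conforming subspaces $\bm{X}_h \subseteq \bm{X}$, $\bm{Y}_h \subseteq \bm{Y}$, $E_h \subseteq E$, $S_h \subseteq S$, $Z_h \subseteq Z$. Since all the norms appearing ($\|\cdot\|_1$, $\|\cdot\|_0$, $\|\nabla\cdot\|_0$, $\|\cdot\|_{div}$) are the restrictions of the continuous norms, every boundedness inequality (\ref{eq:3.3}), (\ref{eq:3.5}), (\ref{eq:3.6}), (\ref{eq:3.7}), (\ref{eq:3.10}), (\ref{eq:3.12}) follows immediately by applying the corresponding continuous estimate (\ref{eq:2.4}), (\ref{eq:2.6}), (\ref{eq:2.7}), (\ref{eq:2.8}), (\ref{eq:2.11}), (\ref{eq:2.13}) to the discrete arguments. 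Likewise the skew-symmetry identities (\ref{eq:3.9}), (\ref{eq:3.11}) are instances of (\ref{eq:2.10}), (\ref{eq:2.12}), since the trilinear forms $c$ and $h$ are defined by the same antisymmetric formula regardless of which functions are plugged in.

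The only point that requires genuine attention is the coercivity estimate (\ref{eq:3.4}) and the lower bound (\ref{eq:3.8}). For (\ref{eq:3.8}), $e(\theta_h,\theta_h) = \|\nabla\theta_h\|_0^2$ holds by definition with equality, so there is nothing to check beyond $Z_h \subseteq Z$. For (\ref{eq:3.4}), recall from the proof of Lemma 2.1 that $\mathcal{A}_0((\bm{u},\bm{J}),(\bm{u},\bm{J})) = a_s(\bm{u},\bm{u}) + a_m(\bm{J},\bm{J}) = R_e^{-1}\|\nabla\bm{u}\|_0^2 + \kappa\|\bm{J}\|_0^2$, because the coupling terms $d(\bm{J},\bm{u}) - d(\bm{u},\bm{J})$ cancel. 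To conclude $\mathcal{A}_0((\bm{u}_h,\bm{J}_h),(\bm{u}_h,\bm{J}_h)) \ge C_{\min}\|(\bm{u}_h,\bm{J}_h)\|_1^2 = C_{\min}(\|\nabla\bm{u}_h\|_0^2 + \|\bm{J}_h\|_{div}^2)$ with $\|\bm{J}_h\|_{div}^2 = \|\bm{J}_h\|_0^2 + \|\nabla\cdot\bm{J}_h\|_2$, one needs to absorb the $\|\nabla\cdot\bm{J}_h\|$ contribution. This is precisely where the hypothesis that $\mathcal{A}_0$ is coercive on $(\bm{X}_h\times\bm{U}_h)\times(\bm{X}_h\times\bm{U}_h)$ rather than on all of $\bm{X}_h\times\bm{Y}_h$ enters: for $\bm{J}_h \in \bm{U}_h$ we have $\nabla\cdot\bm{J}_h = 0$ (since $\bm{U}_h = \{\bm{K}_h \in \bm{Y}_h : \mathrm{div}\,\bm{K}_h = 0\}$, as noted right before the lemma), so $\|\bm{J}_h\|_{div} = \|\bm{J}_h\|_0$ and the coercivity constant is simply $C_{\min} = \min\{R_e^{-1},\kappa\} = \min\{Pr,\kappa\}$ (using $R_e^{-1} = Pr$ as fixed by the notation for $a_s$).

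Accordingly, the proof I would write is short: I would state that since $\bm{X}_h\times\bm{Y}_h\times E_h\times S_h\times Z_h$ are conforming subspaces and the forms together with the norms are inherited verbatim from the continuous setting, all of (\ref{eq:3.3}), (\ref{eq:3.5})--(\ref{eq:3.7}), (\ref{eq:3.9})--(\ref{eq:3.12}) are direct consequences of Lemma 2.1, and (\ref{eq:3.8}) holds by definition; for (\ref{eq:3.4}) I would spell out the cancellation $d(\bm{J}_h,\bm{u}_h) - d(\bm{u}_h,\bm{J}_h) = 0$, the resulting identity $\mathcal{A}_0((\bm{u}_h,\bm{J}_h),(\bm{u}_h,\bm{J}_h)) = Pr\|\nabla\bm{u}_h\|_0^2 + \kappa\|\bm{J}_h\|_0^2$, and then invoke $\mathrm{div}\,\bm{J}_h = 0$ on $\bm{U}_h$ to identify $\|\bm{J}_h\|_0 = \|\bm{J}_h\|_{div}$ and obtain the constant $C_{\min} = \min\{Pr,\kappa\}$. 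I do not foresee any real obstacle; the only thing to be careful about is not claiming coercivity on the full space $\bm{X}_h\times\bm{Y}_h$ (where it fails because of the $\|\nabla\cdot\bm{J}_h\|$ term), and to restrict the coercivity statement to $\bm{X}_h\times\bm{U}_h$ exactly as in the lemma statement.
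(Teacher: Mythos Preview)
Your proposal is correct and matches the paper's approach: the paper gives no separate proof for the discrete lemma at all, tacitly relying on conformity of the finite element subspaces together with the continuous-level estimates of Lemma~2.1 (whose proof in turn is just ``easily derived by H\"older inequalities''). Your explicit treatment of the coercivity on $\bm{X}_h\times\bm{U}_h$ via $\mathrm{div}\,\bm{J}_h=0$ is exactly the point the paper leaves implicit.
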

\begin{lem}
There exists a constant $\beta^*>0$ only depending on $\Omega$ such that
\begin{equation}\label{eq:3.13}
\mathop{\sup}\limits_{(\bm{0},\bm{0})\not=(\bm{v}_h,\bm{K}_h)\in \bm{X}_h\times \bm{Y}_h}\frac{\mathcal{B}((q_h,\psi_h),(\bm{v}_h,\bm{K}_h))}{\|(\bm{v}_h,\bm{K}_h)\|_1} \ge \beta^*\|(q_h,\psi_h)\|_0,
\quad \quad \forall (q_h,\psi_h) \in E_h\times S_h.
\end{equation}
\end{lem}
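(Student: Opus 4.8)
The plan is to establish the discrete inf-sup condition \eqref{eq:3.13} by exploiting the block structure of $\mathcal{B}((q_h,\psi_h),(\bm{v}_h,\bm{K}_h)) = b_s(q_h,\bm{v}_h) + b_m(\psi_h,\bm{K}_h)$, which decouples the velocity--pressure pair from the current density--potential pair. Since the two summands act on independent components $(\bm{v}_h,q_h)\in\bm{X}_h\times E_h$ and $(\bm{K}_h,\psi_h)\in\bm{Y}_h\times S_h$, I would first show that Assumption~3.1 — the two separate inf-sup conditions for $b_s$ on $\bm{X}_h\times E_h$ with constant $\beta_{s,h}$ and for $b_m$ on $\bm{Y}_h\times S_h$ with constant $\beta_{m,h}$ — can be glued together into a single condition for $\mathcal{B}$ on the product space with constant $\beta^* = c\min\{\beta_{s,h},\beta_{m,h}\}$ for a suitable absolute constant $c$.

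Concretely, given $(q_h,\psi_h)\in E_h\times S_h$, I would invoke the first part of \eqref{eq:3.1} to pick $\bm{v}_h\in\bm{X}_h$ with $b_s(q_h,\bm{v}_h)\ge\beta_{s,h}\|q_h\|_0\|\nabla\bm{v}_h\|_0$ (after the usual normalization, e.g. $\|\nabla\bm{v}_h\|_0=\|q_h\|_0$), and the second part to pick $\bm{K}_h\in\bm{Y}_h$ with $b_m(\psi_h,\bm{K}_h)\ge\beta_{m,h}\|\psi_h\|_0\|\bm{K}_h\|_{div}$ with $\|\bm{K}_h\|_{div}=\|\psi_h\|_0$. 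Taking the test pair $(\bm{v}_h,\bm{K}_h)$, one gets $\mathcal{B}((q_h,\psi_h),(\bm{v}_h,\bm{K}_h)) = b_s(q_h,\bm{v}_h)+b_m(\psi_h,\bm{K}_h)\ge \beta_{s,h}\|q_h\|_0^2+\beta_{m,h}\|\psi_h\|_0^2 \ge \min\{\beta_{s,h},\beta_{m,h}\}\,\|(q_h,\psi_h)\|_0^2$, while $\|(\bm{v}_h,\bm{K}_h)\|_1 = (\|\nabla\bm{v}_h\|_0^2+\|\bm{K}_h\|_{div}^2)^{1/2} = \|(q_h,\psi_h)\|_0$; dividing yields the bound with $\beta^* = \min\{\beta_{s,h},\beta_{m,h}\}$. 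Alternatively — and this is likely closer to what the authors intend — since this exact statement already appears as Lemma~2.2 at the continuous level with the proof deferred to \cite{2021Coupled}, I would simply note that the discrete argument is verbatim the continuous one with $\bm{X},\bm{Y},E,S$ replaced by $\bm{X}_h,\bm{Y}_h,E_h,S_h$ and the continuous inf-sup conditions replaced by their discrete counterparts from Assumption~3.1, and refer to \cite{2021Coupled} accordingly.

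The only genuine subtlety — and the one point worth checking rather than asserting — is the normalization: one must verify that the constants $\beta_{s,h}$ and $\beta_{m,h}$ in \eqref{eq:3.1} are $h$-independent (which Assumption~3.1 guarantees, since they depend only on $\Omega$), so that $\beta^*$ is genuinely a uniform constant; and that the product energy norm $\|(\bm{v}_h,\bm{K}_h)\|_1$ splits orthogonally as written, so no cross terms spoil the estimate. Both are immediate from the definitions in Section~2, so I expect no real obstacle here; the lemma is essentially a bookkeeping consequence of Assumption~3.1 and the definition of $\mathcal{B}$.
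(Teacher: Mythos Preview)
Your proposal is correct. The paper in fact provides no proof for this lemma at all: it is stated and then the text moves directly to Theorem~3.3, in parallel with Lemma~2.2 (the continuous analogue), whose proof is deferred to \cite{2021Coupled}. Your explicit argument---choosing $\bm{v}_h$ and $\bm{K}_h$ separately via the two inf--sup conditions in Assumption~3.1, normalizing so that $\|\nabla\bm{v}_h\|_0=\|q_h\|_0$ and $\|\bm{K}_h\|_{div}=\|\psi_h\|_0$, and exploiting the orthogonal splitting of the product norm---is the standard way to glue a block-diagonal bilinear form's component inf--sup constants into a single one, and it is exactly what the cited reference does; your fallback of simply pointing to \cite{2021Coupled} is precisely what the paper (implicitly) does here. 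There is no gap.
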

\begin{The}
For $\bm{f}\in \bm{H}^{-1}(\Omega),\bm{g}\in\bm{L}^2(\Omega),\varphi \in H^{-1}(\Omega),$ assume
\begin{equation}\label{eq:3.14}
\sigma=\frac{\lambda_2\|L\|_*}{C_{min}^2}+\frac{\lambda_2\lambda_q\lambda_{\psi}}{C_{min}^2}
+\frac{\lambda_2\lambda_q\lambda_{\psi}}{C_{min}}+\frac{\lambda_2\|L\|_*}{C_{min}}<1,
\end{equation}
then problem (\ref{eq:3.2}) is well-posed and the estimates hold as follow,
\begin{equation}\label{eq:3.15}
\|(\bm{u}_h,\bm{v}_h)\|_1\le \frac{\|L\|_*+\lambda_q\lambda_{\psi}}{ C_{min}}, \quad \|\nabla\theta_h\|_0\le \lambda_{\psi}.
\end{equation}
\end{The}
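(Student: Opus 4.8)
The plan is to mirror, at the discrete level, the fixed-point argument that was used for Theorem~2.3, exploiting the fact that the discrete spaces are conforming and that Lemma~3.1, Lemma~3.2 and Assumption~3.1 provide exact discrete analogues of every estimate used in the continuous proof. First I would fix $(\bm{w}_h,\bm{M}_h)\in\Upsilon_h$ and solve the discrete temperature problem $e(\theta_h,r_h)+\mathcal{H}((\bm{w}_h,\bm{M}_h),\theta_h,r_h)=\Psi(r_h)$ for all $r_h\in Z_h$; by the coercivity~(\ref{eq:3.8}), boundedness~(\ref{eq:3.7}) and skew-symmetry~(\ref{eq:3.11}) together with Lax--Milgram, this has a unique solution, which defines a discrete map $\mathcal{F}_h(\bm{w}_h,\bm{M}_h)=\theta_h$, and taking $r_h=\theta_h$ gives $\|\nabla\theta_h\|_0\le\lambda_\psi$, the second bound in~(\ref{eq:3.15}).

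Next I would consider, for the same frozen $(\bm{w}_h,\bm{M}_h)$, the linear discrete saddle-point problem obtained by replacing $\mathcal{A}_1((\bm{u}_h,\bm{J}_h),(\bm{u}_h,\bm{J}_h),\cdot)$ by $\mathcal{A}_1((\bm{w}_h,\bm{M}_h),(\bm{u}_h,\bm{J}_h),\cdot)$ and $\mathcal{Q}(\theta_h,\cdot)$ by $\mathcal{Q}(\mathcal{F}_h(\bm{w}_h,\bm{M}_h),\cdot)$ on the right-hand side. The discrete inf-sup condition~(\ref{eq:3.13}) (equivalently Assumption~3.1) together with the coercivity of $\mathcal{A}_0$ on $(\bm{X}_h\times\bm{U}_h)^2$ in~(\ref{eq:3.4}) and the skew-symmetry~(\ref{eq:3.9}) — which kills the frozen convection term on the kernel — lets me invoke the standard Babu\v{s}ka--Brezzi theory for a well-posed discrete saddle-point problem, yielding a unique $(\bm{u}_h,\bm{J}_h,p_h,\phi_h)$. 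Testing with $(\bm{v}_h,\bm{K}_h)=(\bm{u}_h,\bm{J}_h)$ and using~(\ref{eq:3.4}), (\ref{eq:3.6}), (\ref{eq:3.9}) and the bound $\|\nabla\mathcal{F}_h(\bm{w}_h,\bm{M}_h)\|_0\le\lambda_\psi$ gives
\begin{equation*}
\|(\bm{u}_h,\bm{J}_h)\|_1\le\frac{\|L\|_*+\lambda_q\lambda_\psi}{C_{\min}},
\end{equation*}
which is the first bound in~(\ref{eq:3.15}) and shows that the composed map $\mathcal{G}_h:(\bm{w}_h,\bm{M}_h)\mapsto(\bm{u}_h,\bm{J}_h)$ maps the closed ball $B_R^h=\{(\bm{v}_h,\bm{K}_h)\in\Upsilon_h:\|(\bm{v}_h,\bm{K}_h)\|_1\le C_0\}$ into itself.

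For contractivity I would take two data $(\bm{w}_h^j,\bm{M}_h^j)\in B_R^h$, $j=1,2$, subtract the corresponding discrete equations, test the difference of the velocity--current equations with the difference of the solutions (the pressure/potential term drops because the difference stays in $\Upsilon_h$), and test the difference of the temperature equations with $\mathcal{F}_h(\bm{w}_h^1,\bm{M}_h^1)-\mathcal{F}_h(\bm{w}_h^2,\bm{M}_h^2)$. Using the boundedness estimates~(\ref{eq:3.10}), (\ref{eq:3.12}), the coercivities~(\ref{eq:3.4}), (\ref{eq:3.8}), the continuity~(\ref{eq:3.6}) and the a priori bounds just obtained, exactly the same chain of inequalities as (\ref{eq:2.24})--(\ref{eq:2.26}) produces
\begin{equation*}
\|\mathcal{G}_h(\bm{w}_h^1,\bm{M}_h^1)-\mathcal{G}_h(\bm{w}_h^2,\bm{M}_h^2)\|_1\le\sigma\,\|(\bm{w}_h^1,\bm{M}_h^1)-(\bm{w}_h^2,\bm{M}_h^2)\|_1,
\end{equation*}
so~(\ref{eq:3.14}) makes $\mathcal{G}_h$ a contraction on $B_R^h$; the Banach fixed-point theorem gives a fixed point, which together with the discrete inf-sup condition recovers $p_h,\phi_h$ and hence a solution of~(\ref{eq:3.2}). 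Uniqueness follows by assuming two solutions, subtracting, testing as in (\ref{eq:2.28})--(\ref{eq:2.32}), and again using~(\ref{eq:3.14}) to conclude $\bm{u}_h^1=\bm{u}_h^2$, $\bm{J}_h^1=\bm{J}_h^2$, $\theta_h^1=\theta_h^2$, and then $p_h^1=p_h^2$, $\phi_h^1=\phi_h^2$ from~(\ref{eq:3.13}). The only point that needs a remark rather than a routine repetition is that, unlike the continuous case, $\bm{V}_h\not\subset\bm{V}$; however every coercivity and skew-symmetry estimate used above is stated on $\bm{X}_h\times\bm{U}_h$ (not on $\bm{V}_h$), and $\bm{U}_h\subset\bm{U}$ with $\mathrm{div}\,\bm{K}_h=0$, so the non-conformity in the velocity kernel never enters the well-posedness argument — this is the one place where care is required, and it is the reason the estimates in Lemma~3.1 were deliberately phrased over $\bm{X}_h\times\bm{U}_h$.
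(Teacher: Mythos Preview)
Your proposal is correct and follows exactly the approach the paper intends: the paper does not write out a separate proof for this theorem, since it is the verbatim discrete analogue of Theorem~2.3, and every ingredient (Lax--Milgram for the temperature sub-problem, Babu\v{s}ka--Brezzi for the saddle-point sub-problem, the contraction estimate, and the uniqueness argument) carries over word-for-word once Lemma~2.1, Lemma~2.2 are replaced by Lemma~3.1, Lemma~3.2. Your remark that the non-inclusion $\bm{V}_h\not\subset\bm{V}$ is harmless because the coercivity~(\ref{eq:3.4}) is stated on $\bm{X}_h\times\bm{U}_h$ is precisely the one point that distinguishes the discrete from the continuous case, and you have handled it correctly.
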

\begin{The}
 Assume the conditions of Theorems 2.3, 3.3 hold, the solutions of continuous problem $(\ref{eq:2.3})$ and discrete problem (\ref{eq:3.2}) are respectively $(\bm{u},\bm{J},p,\phi,\theta)$ and $(\bm{u}_h,\bm{J}_h,p_h,\phi_h,\theta_h).$ Suppose that the exact solution $(\bm{u},\bm{J},p,\phi,\theta)$ satisfy
\begin{equation}\label{eq:3.17}
\bm{u}\in \bm{H}^{\gamma+1}(\Omega),\quad p\in H^{\gamma}(\Omega),\quad \bm{J}\in \bm{H}^{\tau}(\Omega),\quad
\phi\in H^{\tau}(\Omega),\quad \theta\in H^{\gamma+1}(\Omega),
\end{equation}
for a regularity exponent $\gamma,\tau >1/2,$ then we can get the following estimates:
\begin{equation*}
\begin{aligned}
&\|(\bm{u}-\bm{u}_h,\bm{J}-\bm{J}_h)\|_1 +\|(\theta-\theta_h)\|_1+\|p-p_h\|_0\\
&\le C(\mathop{\inf}\limits_{(\bm{v}_h,\bm{K}_h)\in (\bm{X}_h\times \bm{Y}_h)}\|(\bm{u}-\bm{v}_h,\bm{J}-\bm{K}_h)\|_1+\mathop{\inf}\limits_{q_h\in E_h}\|p-q_h\|_0+\mathop{\inf}\limits_{s_h\in Z_h}\|\theta-s_h\|_1)  \\
&\le Ch^{\min(\gamma,l,k,\tau)}(\|\bm{u}\|_{1+\gamma,\Omega}+\|\bm{J}_{\tau,\Omega}\|+\|p\|_{\gamma,\Omega}
+\|\theta\|_{1+\gamma,\Omega}),\\
&\|\phi-\phi_h\|_0 \le  C(\mathop{\inf}\limits_{(\bm{v}_h,\bm{K}_h)\in (\bm{X}_h\times \bm{Y}_h)}\|(\bm{u}-\bm{v}_h,\bm{J}-\bm{K}_h)\|_1+\mathop{\inf}\limits_{q_h\in E_h}\|p-q_h\|_0+\mathop{\inf}\limits_{\psi_h\in S_h}\|\phi-\psi_h\|_0)\\
&\le Ch^{\min(\gamma,l,k,\tau)}(\|\bm{u}\|_{1+\gamma,\Omega}+\|\bm{J}_{\tau,\Omega}\|+\|p\|_{\gamma,\Omega}
+\|\phi\|_{\tau,\Omega}).\\
\end{aligned}
\end{equation*}
\end{The}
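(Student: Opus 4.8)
The plan is to follow the standard Céa-type argument for nonlinear mixed problems, adapted to the present coupled system, and to proceed in three stages: first control the velocity--current error, then the temperature error, then the pressure--potential error, and finally close the coupling by absorption. Throughout I will write $(\bm{e}_u,\bm{e}_J)=(\bm{u}-\bm{u}_h,\bm{J}-\bm{J}_h)$, $e_\theta=\theta-\theta_h$, $e_p=p-p_h$, $e_\phi=\phi-\phi_h$, and split each error through an arbitrary interpolant, e.g. $\bm{e}_u=(\bm{u}-\bm{v}_h)+(\bm{v}_h-\bm{u}_h)=:\bm{\eta}_u+\bm{\xi}_h$, and similarly for the other variables, the point being that $(\bm{\xi}_h,\bm{\chi}_h)$ lies in $\bm{X}_h\times\bm{Y}_h$ and can be used as a test function.

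First I would subtract the discrete system (\ref{eq:3.2}) from the continuous system (\ref{eq:2.3}) tested against discrete functions, obtaining the error equations. In the momentum--current equation the nonlinear term is handled by the usual splitting $\mathcal{A}_1((\bm{u},\bm{J}),(\bm{u},\bm{J}),\cdot)-\mathcal{A}_1((\bm{u}_h,\bm{J}_h),(\bm{u}_h,\bm{J}_h),\cdot) = \mathcal{A}_1((\bm{e}_u,\bm{e}_J),(\bm{u},\bm{J}),\cdot)+\mathcal{A}_1((\bm{u}_h,\bm{J}_h),(\bm{e}_u,\bm{e}_J),\cdot)$, and similarly for $\mathcal{H}$. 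Testing the error equation with $(\bm{\xi}_h,\bm{\chi}_h)\in\Upsilon_h$ kills the $\mathcal{B}$-term involving $(e_p,e_\phi)$; using coercivity (\ref{eq:3.4}) on the left, skew-symmetry (\ref{eq:3.9}) to eliminate $\mathcal{A}_1((\bm{u}_h,\bm{J}_h),(\bm{\xi}_h,\bm{\chi}_h),(\bm{\xi}_h,\bm{\chi}_h))$, the a priori bounds (\ref{eq:2.16}) and (\ref{eq:3.15}), the boundedness estimates (\ref{eq:3.3}), (\ref{eq:3.6}), (\ref{eq:3.10}), (\ref{eq:3.12}), and the inf-sup/continuity (\ref{eq:3.5})--(\ref{eq:2.6}) to estimate the residual $\mathcal{B}((p-q_h,\phi-\psi_h),\cdot)$, one arrives at an inequality of the schematic form
\begin{equation*}
C_{\min}\|(\bm{\xi}_h,\bm{\chi}_h)\|_1 \le C\big(\|(\bm{\eta}_u,\bm{\eta}_J)\|_1 + \|\nabla\eta_\theta\|_0 + \|\nabla\xi_\theta^h\|_0 + \inf_{q_h}\|p-q_h\|_0\big) + \frac{\lambda_2(\|L\|_*+\lambda_q\lambda_\psi)}{C_{\min}}\|(\bm{\xi}_h,\bm{\chi}_h)\|_1 + \lambda_q\|\nabla\xi_\theta^h\|_0,
\end{equation*}
where $\xi_\theta^h = s_h-\theta_h$ is the discrete part of the temperature error. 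Running the analogous computation on the temperature equation — test with $\xi_\theta^h\in Z_h$, use (\ref{eq:3.8}), (\ref{eq:3.11}), split $\mathcal{H}((\bm{u},\bm{J}),\theta,\cdot)-\mathcal{H}((\bm{u}_h,\bm{J}_h),\theta_h,\cdot)$ as above, and invoke (\ref{eq:3.15}) — gives a bound $\|\nabla\xi_\theta^h\|_0 \le C(\|\nabla\eta_\theta\|_0 + \lambda_2\lambda_\psi\|(\bm{\xi}_h,\bm{\chi}_h)\|_1)$.

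The crux is the absorption step: substituting the temperature bound into the velocity--current bound, the terms proportional to $\|(\bm{\xi}_h,\bm{\chi}_h)\|_1$ on the right-hand side collect with coefficient essentially $C_{\min}\sigma$ (using the definition of $\sigma$ in (\ref{eq:3.14}) — this is precisely where the four terms $\lambda_2\|L\|_*/C_{\min}^2$, $\lambda_2\lambda_q\lambda_\psi/C_{\min}^2$, $\lambda_2\lambda_q\lambda_\psi/C_{\min}$, $\lambda_2\|L\|_*/C_{\min}$ appear, coming respectively from the convection term against $\bm{u}$, the buoyancy coupling, and the two routes through the temperature equation). Since $\sigma<1$ we may move these to the left and divide by $C_{\min}(1-\sigma)$, which yields
\begin{equation*}
\|(\bm{\xi}_h,\bm{\chi}_h)\|_1 + \|\nabla\xi_\theta^h\|_0 \le C\big(\|(\bm{\eta}_u,\bm{\eta}_J)\|_1 + \|\nabla\eta_\theta\|_0 + \inf_{q_h\in E_h}\|p-q_h\|_0\big).
\end{equation*}
A triangle inequality then converts this into the asserted bound on $\|(\bm{e}_u,\bm{e}_J)\|_1 + \|e_\theta\|_1$; adding the pressure contribution requires one more use of the discrete inf-sup condition (\ref{eq:3.13}) applied to the momentum error equation (now that $(\bm{e}_u,\bm{e}_J)$ and $e_\theta$ are controlled, $\mathcal{B}((e_p,\cdot),(\bm{v}_h,\bm{K}_h))$ is bounded by already-estimated quantities, and the pure-pressure inf-sup (\ref{eq:3.1}) isolates $\|p-p_h\|_0$), while the bound on $\|e_\phi\|_0$ follows from the second inf-sup in (\ref{eq:3.1}) applied to the current-density component of the same error equation. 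Finally, taking the infimum over $(\bm{v}_h,\bm{K}_h)$, $q_h$, $\psi_h$, $s_h$ and inserting the approximation properties of Assumption 3.2 under the regularity hypothesis (\ref{eq:3.17}) gives the stated $O(h^{\min(\gamma,l,k,\tau)})$ rate.

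The main obstacle I anticipate is the consistency gap caused by $\bm{V}_h\not\subset\bm{V}$: a discretely divergence-free velocity test function is not exactly divergence-free, so $b_s(q,\bm{\xi}_h)$ need not vanish and the pressure residual $\inf_{q_h}\|p-q_h\|_0$ genuinely enters the velocity estimate (not only the pressure estimate) — this is why the right-hand side above carries that term. One must be careful to estimate $b_s(p-q_h,\bm{\xi}_h)$ by $\max\{Pr,\kappa\}\,\|p-q_h\|_0\,\|(\bm{\xi}_h,\bm{\chi}_h)\|_1$ via (\ref{eq:3.5}) and keep track of it through the absorption, rather than discarding it. By contrast $\bm{U}_h\subset\bm{U}$, so the current-density side carries no analogous defect, and the potential error decouples cleanly at the end — which is the mechanism behind the paper's claimed "error estimates of velocity, current density, temperature and pressure do not depend on potential."
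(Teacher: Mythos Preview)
Your proposal is correct and follows essentially the same route as the paper's proof: derive the Galerkin-orthogonality error equations, split through an interpolant lying in the discrete kernel $\Upsilon_h$, test with the discrete part of the error, use coercivity and skew-symmetry together with the a~priori bounds (\ref{eq:2.16})/(\ref{eq:3.15}), substitute the temperature estimate into the velocity--current estimate, and absorb via $1-\sigma>0$; then recover $p$ and $\phi$ separately from the componentwise inf-sup conditions (\ref{eq:3.1}), exploiting $\bm{U}_h\subset\bm{U}$ so that $b_m(\phi-\psi_h,\cdot)$ drops out and the $\phi$-error decouples. The only step you leave implicit that the paper carries out explicitly is the passage from $\inf_{(\bm{w}_h,\bm{M}_h)\in\Upsilon_h}$ to $\inf_{(\bm{v}_h,\bm{K}_h)\in\bm{X}_h\times\bm{Y}_h}$: since you require $(\bm{\xi}_h,\bm{\chi}_h)\in\Upsilon_h$, your interpolant must lie in $\Upsilon_h$, and converting the resulting kernel-infimum into the full-space infimum stated in the theorem needs one invocation of the discrete inf-sup condition (\ref{eq:3.13}) via the standard Girault--Raviart lemma (the paper's constant $C^*=1+\max\{Pr,\kappa\}/\beta^*$).
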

\begin{proof}
We begin to handle the error in $\theta$.
Setting $(\bm{v},q,\bm{K},\psi,\theta)=(\bm{v}_h,q_h,\bm{K}_h,\psi_h,\theta_h)$, subtracting (\ref{eq:3.2}) from (\ref{eq:2.3}), we have
\begin{equation}\label{eq:3.18}
\left\{\begin{array}{c}
\begin{aligned}
&\mathcal{A}_0((\bm{u}-\bm{u}_h,\bm{J}-\bm{J}_h),(\bm{v}_h,\bm{K}_h))
+\mathcal{A}_1((\bm{u}-\bm{u}_h,\bm{J}-\bm{J}_h),(\bm{u},\bm{J}),(\bm{v}_h,\bm{K}_h))\\
&+\mathcal{A}_1((\bm{u}_h,\bm{J}_h),(\bm{u}-\bm{u}_h,\bm{J}-\bm{J}_h),(\bm{v}_h,\bm{K}_h))
+\mathcal{B}((p-p_h,\phi-\phi_h),(\bm{v}_h,\bm{K}_h))\\
&+\mathcal{Q}(\theta-\theta_h,(\bm{v}_h,\bm{K}_h))=0,\\
&\mathcal{B}((q_h,\psi_h),(\bm{u}-\bm{u}_h,\bm{J}-\bm{J}_h))=0,\\
&e(\theta-\theta_h,r_h)+\mathcal{H}((\bm{u}-\bm{u}_h,\bm{J}-\bm{J}_h),\theta,r_h)
+\mathcal{H}((\bm{u}_h,\bm{J}_h),\theta-\theta_h,r_h)=0.
\end{aligned}
\end{array} \right.
\end{equation}
For $(\bm{w}_h,\bm{M}_h)\in \Upsilon_h,(q_h,\psi_h)\in E_h\times S_h,s_h \in Z_h,$ setting
$$\bm{u}-\bm{u}_h=(\bm{u}-\bm{w}_h)+(\bm{w}_h-\bm{u}_h),\quad p-p_h=(p-q_h)+(q_h-p_h),\quad \theta-\theta_h=(\theta-s_h)+(s_h-\theta_h),$$
$$\bm{J}-\bm{J}_h=(\bm{J}-\bm{M}_h)+(\bm{M}_h-\bm{J}_h),\quad \phi-\phi_h=(\phi-\psi_h)+(\psi_h-\phi_h).$$
Substituting the above formula into (\ref{eq:3.18}) and taking $r_h=\theta_h-s_h$, we can further obtain
\begin{equation}\label{eq:3.19}
\begin{aligned}
&e(\theta_h-s_h,\theta_h-s_h)+\mathcal{H}((\bm{u}_h,\bm{J}_h),\theta_h-s_h,\theta_h-s_h)
=\mathcal{H}((\bm{u}-\bm{w}_h,\bm{J}-\bm{M}_h),\theta,\theta_h-s_h)\\
&+e(\theta-s_h,\theta_h-s_h)+\mathcal{H}((\bm{u}_h,\bm{J}_h),\theta-s_h,\theta_h-s_h)-\mathcal{H}((\bm{u}_h-\bm{w}_h,\bm{J}_h-\bm{M}_h),\theta,\theta_h-s_h).
\end{aligned}
\end{equation}
By applying (\ref{eq:3.12}), (\ref{eq:2.8}), (\ref{eq:2.13}), (\ref{eq:2.11}), the second inequality of (\ref{eq:2.16}) and (\ref{eq:3.15}), we arrive at
\begin{equation}\label{eq:3.20}
\begin{aligned}
\|\nabla(\theta-\theta_h)\|_0 & \le \frac{2C_{min}+\lambda_2\mu\|L\|_*+\lambda_2\lambda_q\lambda_{\psi}}{C_{min}}\|\nabla(\theta-s_h)\|_0 +\lambda_2\lambda_{\psi}\|(\bm{u}-\bm{w}_h,\bm{J}-\bm{M}_h)\|_1 \\
 &+\lambda_2\lambda_{\psi}\|(\bm{u}_h-\bm{w}_h,\bm{J}_h-\bm{M}_h)\|_1.
\end{aligned}
\end{equation}
Since (\ref{eq:3.18}), the following equation holds:
\begin{equation}\label{eq:3.21}
\begin{aligned}
&\mathcal{A}_0((\bm{u}_h-\bm{w}_h,\bm{J}_h-\bm{M}_h),(\bm{v}_h,\bm{K}_h))
+\mathcal{A}_1((\bm{u}_h-\bm{w}_h,\bm{J}_h-\bm{M}_h),(\bm{u},\bm{J}),(\bm{v}_h,\bm{K}_h))\\
&+\mathcal{A}_1((\bm{u}_h,\bm{J}_h),(\bm{u}_h-\bm{w}_h,\bm{J}_h-\bm{M}_h),(\bm{v}_h,\bm{K}_h))
+\mathcal{B}((p_h-q_h,\phi_h-\psi_h),(\bm{v}_h,\bm{K}_h))\\
&=\mathcal{A}_0((\bm{u}-\bm{w}_h,\bm{J}-\bm{M}_h),(\bm{v}_h,\bm{K}_h))
+\mathcal{B}((p-q_h,\phi-\psi_h),(\bm{v}_h,\bm{K}_h))-\mathcal{Q}(\theta-\theta_h,(\bm{v}_h,\bm{K}_h))\\
&+\mathcal{A}_1((\bm{u}_h,\bm{J}_h),(\bm{u}-\bm{w}_h,\bm{J}-\bm{M}_h),(\bm{v}_h,\bm{K}_h))
+\mathcal{A}_1((\bm{u}-\bm{w}_h,\bm{J}-\bm{M}_h),(\bm{u},\bm{J}),(\bm{v}_h,\bm{K}_h)).\\
\end{aligned}
\end{equation}
Choosing $(\bm{v}_h,\bm{K}_h)=(\bm{u}_h-\bm{w}_h,\bm{J}_h-\bm{M}_h) \in \Upsilon_h,$ and combining (\ref{eq:3.9}), (\ref{eq:3.4}), (\ref{eq:2.11}) and (\ref{eq:2.16}), we obtain
\begin{equation}\label{eq:3.22}
\begin{aligned}
L.H.S
& \ge (C_{min}-\frac{\lambda_2\|L\|_*+\lambda_2\lambda_q\lambda_{\psi}}{ C_{min}})\|(\bm{u}_h-\bm{w}_h,\bm{J}_h-\bm{M}_h)\|_1^2.
\end{aligned}
\end{equation}
Analogously by (\ref{eq:2.4}), (\ref{eq:2.6}), (\ref{eq:2.11}) and (\ref{eq:2.7}), we can further obtain
\begin{equation}\label{eq:3.23}
\begin{aligned}
R.H.S \le\|(\bm{u}_h-\bm{w}_h,\bm{J}_h-\bm{M}_h)\|_1(C(\|(\bm{u}-\bm{w}_h,\bm{J}-\bm{M}_h)\|_1
+\|(p-q_h)\|_0)+\lambda_q\|\nabla(\theta-\theta_h)\|_0),
\end{aligned}
\end{equation}
which combines (\ref{eq:3.22}) to lead to
\begin{equation}\label{eq:3.24}
\begin{aligned}
&(C_{min}-\frac{\lambda_2\|L\|_*
+\lambda_2\lambda_q\lambda_{\psi}}{ C_{min}})\|(\bm{u}_h-\bm{w}_h,\bm{J}_h-\bm{M}_h)\|_1\\
&\le C(\|(\bm{u}-\bm{w}_h,\bm{J}-\bm{M}_h)\|_1+\|(p-q_h)\|_0)+\lambda_q\|\nabla(\theta-\theta_h)\|_0.
\end{aligned}
\end{equation}
Substituting (\ref{eq:3.20}) into (\ref{eq:3.24}), the inequality holds as follow:
\begin{equation*}
\begin{aligned}
&C_{min}(1-\sigma)\|(\bm{u}_h-\bm{w}_h,\bm{J}_h-\bm{M}_h)\|_1\\
&\le C(\|(\bm{u}-\bm{w}_h,\bm{J}-\bm{M}_h)\|_1+\|(p-q_h)\|_0+\|\nabla(\theta-s_h)\|_0 ).\\
\end{aligned}
\end{equation*}
On account of (\ref{eq:3.14}), we can easily conclude that $1-\sigma \ge 0$.
Consequently, we have
\begin{equation*}
\begin{aligned}
\|(\bm{u}_h-\bm{w}_h,\bm{J}_h-\bm{M}_h)\|_1 \le C(\|(\bm{u}-\bm{w}_h,\bm{J}-\bm{M}_h)\|_1+\|(p-q_h)\|_0+\|\nabla(\theta-s_h)\|_0 ).\\
\end{aligned}
\end{equation*}
Therefore, by the triangle inequality and taking infimum, we get
$$\|(\bm{u}-\bm{u}_h,\bm{J}-\bm{J}_h)\|_1
\le C(\mathop{inf}\limits_{(\bm{w}_h,\bm{M}_h)\in \Upsilon_h}\|(\bm{u}-\bm{w}_h,\bm{J}-\bm{M}_h)\|_1+\mathop{inf}\limits_{q_h\in E_h}\|p-q_h\|_0+\mathop{inf}\limits_{s_h\in Z_h}\|\theta-s_h\|_1),  $$
\begin{equation*}
\begin{aligned}
\|\nabla(\theta-\theta_h)\|_0 & \le C(\|\nabla(\theta-s_h)\|_0 +\|(\bm{u}-\bm{w}_h,\bm{J}-\bm{M}_h)\|_1 +||p-q_h||_0).
\end{aligned}
\end{equation*}
Applying the inf-sup condition (\ref{eq:2.14}), the discrete inf-sup condition (\ref{eq:3.13}) and Lemma1.1 in Part II of \cite{1986Finite}, we get
\begin{equation*}
\begin{aligned}
\mathop{inf}\limits_{(\bm{w}_h,\bm{M}_h)\in \Upsilon_h}\|(\bm{u}-\bm{w}_h,\bm{J}-\bm{M}_h)\|_1 \le C^* \mathop{inf}\limits_{(\bm{v}_h,\bm{K}_h)\in (\bm{X}_h\times \bm{Y}_h)}||(\bm{u}-\bm{v}_h,\bm{J}-\bm{K}_h)||_1,
\end{aligned}
\end{equation*}
where $C^*=1+max\{Pr,\kappa\}/\beta^*.$
Therefore, we can further obtain the following  estimates
$$\|(\bm{u}-\bm{u}_h,\bm{J}-\bm{J}_h)\|_1
\le C(\mathop{inf}\limits_{(\bm{v}_h,\bm{K}_h)\in (\bm{X}_h\times \bm{Y}_h)}\|(\bm{u}-\bm{v}_h,\bm{J}-\bm{K}_h)\|_1+\mathop{inf}\limits_{q_h\in E_h}\|p-q_h\|_0+\mathop{inf}\limits_{s_h\in Z_h}\|\theta-s_h\|_1),  $$
$$\|\theta-\theta_h)\|_1\le C(\mathop{inf}\limits_{(\bm{v}_h,\bm{K}_h)\in (\bm{X}_h\times \bm{Y}_h)}\|(\bm{u}-\bm{v}_h,\bm{J}-\bm{K}_h)\|_1+\mathop{inf}\limits_{q_h\in E_h}\|p-q_h\|_0+\mathop{inf}\limits_{s_h\in Z_h}\|\theta-s_h\|_1).  $$
Next, let's continue to arrive at the error estimate of $(p,\phi)$.
For all $(q_h,\psi_h)\in E_h\times S_h, (\bm{v}_h,\bm{K}_h)\in \bm{X}_h\times \bm{Y}_h,$ we have
\begin{equation*}
\begin{aligned}
\mathcal{B}((p_h-q_h,\phi_h-\psi_h),(\bm{v}_h,\bm{K}_h)=\mathcal{A}_0((\bm{u}-\bm{u}_h,\bm{J}-\bm{J}_h),(\bm{v}_h,\bm{K}_h))
+\mathcal{A}_1((\bm{u}-\bm{u}_h,\bm{J}-\bm{J}_h),(\bm{u},\bm{J}),(\bm{v}_h,\bm{K}_h))\\
+\mathcal{A}_1((\bm{u}_h,\bm{J}_h),(\bm{u}-\bm{u}_h,\bm{J}-\bm{J}_h),(\bm{v}_h,\bm{K}_h))
+\mathcal{B}((p-q_h,\phi-\psi_h),(\bm{v}_h,\bm{K}_h))
+\mathcal{Q}(\theta-\theta_h,(\bm{v}_h,\bm{K}_h)).
\end{aligned}
\end{equation*}
To get more detailed results, we rewrite the above compact form to the original form, as follows:
\begin{equation*}
\begin{aligned}
b_s(p_h-q_h,\bm{v}_h)&=a_s((\bm{u}-\bm{u}_h),\bm{v}_h)+d(\bm{J}-\bm{J}_h,\bm{v}_h)
+c_0((\bm{u}-\bm{u}_h),\bm{u},\bm{v}_h)\\
&+c_0(\bm{u}_h,(\bm{u}-\bm{u}_h),\bm{v}_h)+b_s(p-q_h,\bm{v}_h)+Q(\theta-\theta_h,(\bm{v}_h,\bm{K}_h))
\end{aligned}
\end{equation*}
and
\begin{equation*}
\begin{aligned}
b_m(\phi_h-\psi_h,\bm{K}_h)&=a_m((\bm{J}-\bm{J}_h),\bm{K}_h)-d(\bm{K}_h,\bm{u}-\bm{u}_h)
+b_m(\phi-\psi_h,\bm{K}_h).
\end{aligned}
\end{equation*}
Therefore, using (\ref{eq:2.2}), (\ref{eq:2.6}), (\ref{eq:2.7}), we deduce further that
\begin{equation*}
\begin{aligned}
b_s(p_h-q_h,\bm{v}_h) &\le \|\nabla (\bm{u}-\bm{u}_h)\|_0\|\nabla \bm{v}_h\|_0+\lambda_1 \kappa \|\bm{B}\|_{L^3}\| (\bm{J}-\bm{J}_h)\|_{div}\|\nabla \bm{v}_h\|_0
+\lambda_2\|\nabla (\bm{u}-\bm{u}_h)\|_0\|\nabla \bm{u}\|_0\|\nabla \bm{v}_h\|_0\\
&+\lambda_2\|\nabla (\bm{u}-\bm{u}_h)\|_0\|\nabla \bm{u}_h\|_0\|\nabla \bm{v}_h\|_0
+Pr\|(p-q_h)\|_0\|\nabla \bm{v}\|_0+\lambda_q\|\nabla (\theta-\theta_h)\|_0\|\nabla \bm{v}_h\|_0
\end{aligned}
\end{equation*}
and
\begin{equation*}
\begin{aligned}
b_m(\phi_h-\psi_h,\bm{K}_h) &\le \kappa \| (\bm{J}-\bm{J}_h)\|_{div}\|\bm{K}_h\|_{div}+\lambda_1 \kappa \|\bm{B}\|_{L^3}\|\nabla(\bm{u}-\bm{u}_h)\|_0\| \bm{K}_h\|_{div}
+\kappa\|(\phi-\psi_h)\|_0\|\bm{K}_h\|_{div}\\
&\le C(\|(\bm{u}-\bm{u}_h,\bm{J}-\bm{J}_h)\|_1+\|\phi-\psi_h\|_0)\|\bm{K}_h\|_{div}.
\end{aligned}
\end{equation*}
With the help of the discrete inf-sup condition (\ref{eq:3.13}), the triangle inequality and taking infimum over $E_h$ and $S_h$, respectively, we get
\begin{equation*}
\begin{aligned}
\|p-p_h\|_0 &\le C(\|(\bm{u}-\bm{u}_h,\bm{J}-\bm{J}_h)\|_1+\|\nabla (\theta-\theta_h)\|_0+\|p-q_h\|_0)\\
&\le C(\mathop{inf}\limits_{(\bm{v}_h,\bm{K}_h)\in (\bm{X}_h\times \bm{Y}_h)}\|(\bm{u}-\bm{v}_h,\bm{J}-\bm{K}_h)||_1+\mathop{inf}\limits_{q_h\in E_h}\|p-q_h\|_0+\mathop{inf}\limits_{s_h\in Z_h}\|\theta-s_h\|_1),
\end{aligned}
\end{equation*}
and
\begin{equation*}
\begin{aligned}
\|\phi-\psi_h\|_0 &\le C(\mathop{inf}\limits_{\psi_h\in S_h}\|(\phi-\psi_h)\|_0+\|(\bm{u}-\bm{u}_h,\bm{J}-\bm{J}_h)\|_1)\\
&\le C(\mathop{inf}\limits_{\psi_h\in S_h}\|(\phi-\psi_h)\|_0+\mathop{inf}\limits_{(\bm{v}_h,\bm{K}_h)\in (\bm{X}_h\times \bm{Y}_h)}\|(\bm{u}-\bm{v}_h,\bm{J}-\bm{K}_h)\|_1+\mathop{inf}\limits_{q_h\in E_h}\|p-q_h\|_0).
\end{aligned}
\end{equation*}
This completes the proof.
\end{proof}
\section{Iterative methods}
In this section, we introduce three classical iterative methods, namely Stokes, Newton and Oseen iterative methods, for hybrid finite element problem (\ref{eq:3.2}). For simplicity, we consider
\begin{equation*}
\begin{aligned}
\bm{D} :=\bm{X}\times\bm{Y},\quad G :=M\times N,
\end{aligned}
\end{equation*}
and for any $(\bm{u},\bm{J}),(\bm{v},\bm{K})\in \bm{D},$ and $(p,\phi), (q,\psi)\in G,$
\begin{equation*}
\begin{aligned}
\bm{\eta}:=(\bm{u},\bm{J}),\quad \xi :=(p,\phi), \quad \bm{\Phi} :=(\bm{v},\bm{K}),\quad \chi :=(q,\psi),\quad
z_{\zeta}^n=\zeta_h-\zeta_h^n,\quad\zeta=\bm{\eta}, \xi, \theta.
\end{aligned}
\end{equation*}
The initial value $\bm{\eta}_h^0\in \Upsilon_h,\theta^0\in Z_h$  of three iterations is obtained by the following equations:
\begin{equation}\label{eq:4.1}
\left\{\begin{array}{c}
\begin{aligned}
&\mathcal{A}_0(\bm{\eta}_h^0,\bm{\Phi}_h)+\mathcal{B}(\xi_h^0,\bm{\Phi}_h)+\mathcal{Q}(\theta_h^0,\bm{\Phi}_h)=\langle L,\bm{\Phi}_h \rangle,\\
&\mathcal{B}(\chi_h,\bm{\eta}_h^0)=0,\\
&e(\theta_h^0,r_h)=\Psi(r_h),
\end{aligned}
\end{array} \right.
\end{equation}
for $\forall (\bm{\Phi}_h,\chi_h,r_h)\in \bm{D}_h\times G_h\times Z_h.$\\
\textbf{Method I (Stokes Iteration)}\\
Given $\bm{\eta}_h^{n-1} \in \Upsilon_h$ and $\theta_h^{n-1}\in Z_h$, find $(\bm{\eta}_h^n,\xi_h^n,\theta_h^n) \in \bm{D}_h\times G_h \times Z_h$ satisfies the equations that for $\forall (\bm{\Phi}_h,\chi_h,r_h)\in \bm{D}_h\times G_h \times Z_h$
\begin{equation} \label{eq:4.2}
\left\{\begin{array}{c}
\begin{aligned}
&\mathcal{A}_0(\bm{\eta}_h^n,\bm{\Phi}_h)+\mathcal{A}_1(\bm{\eta}_h^{n-1},\bm{\eta}_h^{n-1},\bm{\Phi}_h)+\mathcal{B}(\xi_h^n,\bm{\Phi}_h)
+\mathcal{Q}(\theta_h^n,\bm{\Phi}_h)=\langle L,\bm{\Phi}_h \rangle,\\
&\mathcal{B}(\chi_h,\bm{\eta}_h^n)=0,\\
&e(\theta_h^n,r_h)+\mathcal{H}(\bm{\eta}_h^{n-1},\theta_h^{n-1},r_h)=\Psi(r_h).
\end{aligned}
\end{array} \right.
\end{equation}
\textbf{Method II (Newton Iteration)}\\
Given $\bm{\eta}_h^{n-1} \in \Upsilon_h$ and $\theta_h^{n-1}\in Z_h$, find $(\bm{\eta}_h^n,\xi_h^n,\theta_h^n) \in \bm{D}_h\times G_h \times Z_h$ satisfies the equations that for $\forall (\bm{\Phi}_h,\chi_h,r_h)\in \bm{D}_h\times G_h \times Z_h$
\begin{equation} \label{eq:4.3}
\left\{\begin{array}{c}
\begin{aligned}
&\mathcal{A}_0(\bm{\eta}_h^n,\bm{\Phi}_h)+\mathcal{A}_1(\bm{\eta}_h^{n-1},\bm{\eta}_h^{n},\bm{\Phi}_h)
+\mathcal{A}_1(\bm{\eta}_h^{n},\bm{\eta}_h^{n-1},\bm{\Phi}_h)
-\mathcal{A}_1(\bm{\eta}_h^{n-1},\bm{\eta}_h^{n-1},\bm{\Phi}_h)\\
&+\mathcal{B}(\xi_h^n,\bm{\Phi}_h)+\mathcal{Q}(\theta_h^n,\bm{\Phi}_h)=\langle L,\bm{\Phi}_h \rangle,\\
&\mathcal{B}(\chi_h,\bm{\eta}_h^n)=0,\\
&e(\theta_h^n,r_h)+\mathcal{H}(\bm{\eta}_h^{n-1},\theta_h^{n},r_h)+\mathcal{H}(\bm{\eta}_h^{n},\theta_h^{n-1},r_h)
-\mathcal{H}(\bm{\eta}_h^{n-1},\theta_h^{n-1},r_h)=\Psi(r_h).
\end{aligned}
\end{array} \right.
\end{equation}
\textbf{Method III (Oseen Iteration)}\\
Given $\bm{\eta}_h^{n-1} \in \Upsilon_h$, find $(\bm{\eta}_h^n,\xi_h^n,\theta_h^n) \in \bm{D}_h\times G_h \times Z_h$ satisfies the equations that for $\forall (\bm{\Phi}_h,\chi_h,r_h)\in \bm{D}_h\times G_h \times Z_h$
\begin{equation} \label{eq:4.4}
\left\{\begin{array}{c}
\begin{aligned}
&\mathcal{A}_0(\bm{\eta}_h^n,\bm{\Phi}_h)+\mathcal{A}_1(\bm{\eta}_h^{n-1},\bm{\eta}_h^{n},\bm{\Phi}_h)
+\mathcal{B}(\xi_h^n,\bm{\Phi}_h)+\mathcal{Q}(\theta_h^n,\bm{\Phi}_h)=\langle L,\bm{\Phi}_h \rangle,\\
&\mathcal{B}(\chi_h,\bm{\eta}_h^n)=0,\\
&e(\theta_h^n,r_h)+\mathcal{H}(\bm{\eta}_h^{n-1},\theta_h^{n},r_h)=\Psi(r_h).
\end{aligned}
\end{array} \right.
\end{equation}
\begin{lem}
Under the conditions of Theorem 2.3, Assumptions 3.1 and 3.2 , the
initial values are well-defined as well as the following estimates hold:
\begin{equation}\label{eq:4.5}
\begin{aligned}
\|\nabla\theta_h^0\|_0 \le \lambda_{\psi},\quad
\|\bm{\eta}_h^0\|_1 \le \frac{\|L\|_*+\lambda_q\lambda_{\psi}}{ C_{\min}},\quad
\beta_1^*\|\xi_h^0\|
&\le (\|L\|_*+\lambda_q\lambda_{\psi})(1+\frac{C_{\max}}{C_{\min}}),
\end{aligned}
\end{equation}
and also satisfy the following boundedness
\begin{equation} \label{eq:4.8}
\begin{aligned}
\|\nabla z_{\theta}^0\|_0 \le\sigma \lambda_{\psi},\quad
\|z_{\bm{\eta}}^0\|_1 &\le \frac{\|L\|_*+\lambda_q\lambda_{\psi}}{ C_{\min}}\sigma,\quad
\beta^*\|z_{\xi}^0\|_0
&\le(C_{\max}+C_{\min})(\frac{\|L\|_*+\lambda_q\lambda_{\psi}}{C_{\min}})\sigma.
\end{aligned}
\end{equation}
\end{lem}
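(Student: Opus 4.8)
The plan is to exploit the triangular structure of (\ref{eq:4.1}): first solve the decoupled temperature equation, then the linear saddle point for $(\bm{\eta}_h^0,\xi_h^0)$, and afterwards compare the initial data with the discrete solution of (\ref{eq:3.2}) by subtracting the two systems.

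First I would establish well-posedness and the bounds (\ref{eq:4.5}). The last line $e(\theta_h^0,r_h)=\Psi(r_h)$ is uniquely solvable on $Z_h$ by Lax--Milgram thanks to (\ref{eq:3.7})--(\ref{eq:3.8}), and testing with $r_h=\theta_h^0$ together with $|\Psi(r_h)|\le\lambda_{\psi}\|\nabla r_h\|_0$ gives $\|\nabla\theta_h^0\|_0\le\lambda_{\psi}$. With $\theta_h^0$ fixed, the first two lines form a linear Brezzi-type saddle point problem: $\mathcal{A}_0$ is bounded by (\ref{eq:3.3}) and coercive on $\Upsilon_h$ by (\ref{eq:3.4}), and $\mathcal{B}$ satisfies the discrete inf--sup condition (\ref{eq:3.13}), so $(\bm{\eta}_h^0,\xi_h^0)$ exists and is unique. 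Taking $\bm{\Phi}_h=\bm{\eta}_h^0\in\Upsilon_h$ annihilates $\mathcal{B}(\xi_h^0,\bm{\eta}_h^0)$, so (\ref{eq:3.4}), (\ref{eq:3.6}) and the $\theta_h^0$-bound give $C_{\min}\|\bm{\eta}_h^0\|_1^2\le(\|L\|_*+\lambda_q\lambda_{\psi})\|\bm{\eta}_h^0\|_1$, the second estimate of (\ref{eq:4.5}); the bound on $\xi_h^0$ then follows by inserting $\mathcal{B}(\xi_h^0,\bm{\Phi}_h)=\langle L,\bm{\Phi}_h\rangle-\mathcal{A}_0(\bm{\eta}_h^0,\bm{\Phi}_h)-\mathcal{Q}(\theta_h^0,\bm{\Phi}_h)$ into (\ref{eq:3.13}) and using (\ref{eq:3.3}), (\ref{eq:3.6}).

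For the difference bounds (\ref{eq:4.8}) I would subtract (\ref{eq:4.1}) from (\ref{eq:3.2}); note $z_{\bm{\eta}}^0\in\Upsilon_h$ since both $\bm{\eta}_h$ and $\bm{\eta}_h^0$ satisfy the divergence constraint. The temperature difference obeys $e(z_{\theta}^0,r_h)=-\mathcal{H}(\bm{\eta}_h,\theta_h,r_h)$; writing $\theta_h=\theta_h^0+z_{\theta}^0$, taking $r_h=z_{\theta}^0$ and using the skew-symmetry (\ref{eq:3.11}) removes $\mathcal{H}(\bm{\eta}_h,z_{\theta}^0,z_{\theta}^0)$, leaving $\|\nabla z_{\theta}^0\|_0\le\lambda_2\|\bm{\eta}_h\|_1\|\nabla\theta_h^0\|_0\le\lambda_2\tfrac{\|L\|_*+\lambda_q\lambda_{\psi}}{C_{\min}}\lambda_{\psi}$ by (\ref{eq:3.12}) and the a priori bounds (\ref{eq:3.15}); the coefficient of $\lambda_{\psi}$ equals $\tfrac{\lambda_2\|L\|_*}{C_{\min}}+\tfrac{\lambda_2\lambda_q\lambda_{\psi}}{C_{\min}}$, two of the four nonnegative summands of $\sigma$, so $\|\nabla z_{\theta}^0\|_0\le\sigma\lambda_{\psi}$. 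For the momentum difference I would test with $\bm{\Phi}_h=z_{\bm{\eta}}^0$ (so $\mathcal{B}(z_{\xi}^0,z_{\bm{\eta}}^0)=0$), split $\bm{\eta}_h=\bm{\eta}_h^0+z_{\bm{\eta}}^0$ in the second slot of $\mathcal{A}_1(\bm{\eta}_h,\bm{\eta}_h,z_{\bm{\eta}}^0)$ and use (\ref{eq:3.9}); then (\ref{eq:3.4}), (\ref{eq:3.10}), (\ref{eq:3.6}), (\ref{eq:3.15}) and the $z_{\theta}^0$-bound give $C_{\min}\|z_{\bm{\eta}}^0\|_1\le\lambda_2\|\bm{\eta}_h\|_1\|\bm{\eta}_h^0\|_1+\lambda_q\|\nabla z_{\theta}^0\|_0$, and after factoring $\tfrac{\|L\|_*+\lambda_q\lambda_{\psi}}{C_{\min}}$ the remaining coefficient is exactly the first three summands of $\sigma$, hence $\le\sigma$. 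Finally, from $\mathcal{B}(z_{\xi}^0,\bm{\Phi}_h)=-\mathcal{A}_0(z_{\bm{\eta}}^0,\bm{\Phi}_h)-\mathcal{A}_1(\bm{\eta}_h,\bm{\eta}_h,\bm{\Phi}_h)-\mathcal{Q}(z_{\theta}^0,\bm{\Phi}_h)$, estimating $\mathcal{A}_0$ by (\ref{eq:3.3}), the quadratic term by (\ref{eq:3.10}), $\mathcal{Q}$ by (\ref{eq:3.6}) and applying (\ref{eq:3.13}), one factors $\tfrac{\|L\|_*+\lambda_q\lambda_{\psi}}{C_{\min}}$: the $C_{\max}$-piece separates as $C_{\max}\sigma$, and the remainder, multiplied by $C_{\min}$, is again a subset of the summands of $\sigma$, yielding $\beta^*\|z_{\xi}^0\|_0\le(C_{\max}+C_{\min})\tfrac{\|L\|_*+\lambda_q\lambda_{\psi}}{C_{\min}}\sigma$.

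There is no genuine analytic obstacle: (\ref{eq:4.1}) is linear, and the nonlinear forms $\mathcal{A}_1$, $\mathcal{H}$ enter only through the already-controlled quantities $\|\bm{\eta}_h\|_1$, $\|\bm{\eta}_h^0\|_1$, $\|\nabla\theta_h^0\|_0$. The one place requiring care is the final bookkeeping in the three difference estimates --- one must regroup the crude sum of products of a priori bounds so that the common factor $(\|L\|_*+\lambda_q\lambda_{\psi})/C_{\min}$ (respectively $\lambda_{\psi}$) is extracted cleanly and the leftover coefficient is recognised as a partial sum, hence an upper bound is produced by, the four nonnegative terms defining $\sigma$ in (\ref{eq:3.14}).
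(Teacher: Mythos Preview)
Your proposal is correct and follows essentially the same route as the paper: test (\ref{eq:4.1}) with $(\bm{\eta}_h^0,\xi_h^0,\theta_h^0)$ for the a~priori bounds, recover $\xi_h^0$ from the inf--sup condition, then subtract (\ref{eq:4.1}) from (\ref{eq:3.2}), test with the differences, and recognise the resulting coefficients as partial sums of the four nonnegative terms defining $\sigma$. The only cosmetic differences are that you spell out the Lax--Milgram/Brezzi well-posedness argument (the paper leaves this implicit) and that in the $z_{\theta}^0$ step you split $\theta_h=\theta_h^0+z_{\theta}^0$ and invoke skew-symmetry (\ref{eq:3.11}) to land on $\|\nabla\theta_h^0\|_0$, whereas the paper simply bounds $\mathcal{H}(\bm{\eta}_h,\theta_h,z_{\theta}^0)$ directly via (\ref{eq:3.12}) and $\|\nabla\theta_h\|_0\le\lambda_{\psi}$ from (\ref{eq:3.15}); both give the same numerical bound.
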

\begin{proof}
 Setting $(\bm{\Phi}_h,\chi_h,r_h)=(\bm{\eta}_h^0,\xi_h^0,\theta_h^0),$
\begin{equation*}
\left\{\begin{array}{c}
\begin{aligned}
&\mathcal{A}_0(\bm{\eta}_h^0,\bm{\eta}_h^0)+\mathcal{B}(\xi_h^0,\bm{\eta}_h^0)
+\mathcal{Q}(\theta_h^0,\bm{\eta}_h^0)=\langle L,\bm{\eta}_h^0 \rangle,\\
&\mathcal{B}(\xi_h^0,\bm{\eta}_h^0)=0,\\
&e(\theta_h^0,\theta_h^0)=\Psi(\theta_h^0).
\end{aligned}
\end{array} \right.
\end{equation*}
Combining (\ref{eq:3.8}), (\ref{eq:3.4}) and (\ref{eq:3.6}), we can have
\begin{equation}\label{eq:4.11}
\begin{aligned}
\|\nabla\theta_h^0\|_0 \le \lambda_{\psi},\quad
\|\bm{\eta}_h^0\|_1 \le \frac{\|L\|_*+\lambda_q\lambda_{\psi}}{ C_{\min}}.
\end{aligned}
\end{equation}
Now, let's consider the following equation:
\begin{equation*}
\begin{aligned}
\mathcal{B}(\xi_h^0,\bm{\Phi}_h)&=\langle L,\bm{\Phi}_h \rangle-\mathcal{A}_0(\bm{\eta}_h^0,\bm{\Phi}_h)-\mathcal{Q}(\theta_h^0,\bm{\Phi}_h).
\end{aligned}
\end{equation*}
Using the discrete inf-sup condition (\ref{eq:3.13}), (\ref{eq:3.3}) and (\ref{eq:3.6}), we can further obtain
\begin{equation} \label{eq:4.13}
\begin{aligned}
\beta^*\|\xi_h^0\|
\le (\|L\|_*+\lambda_q\lambda_{\psi})(1+\frac{C_{\max}}{C_{\min}}).
\end{aligned}
\end{equation}
Subtracting (\ref{eq:4.1}) from (\ref{eq:3.2}) and setting $(\bm{\Phi}_h,\chi_h,r_h)=(z_{\bm{\eta}}^0,z_\xi^0,z_{\theta}^0)$ yield
\begin{equation*}
\left\{\begin{array}{c}
\begin{aligned}
&\mathcal{A}_0(z_{\bm{\eta}}^0,z_{\bm{\eta}}^0)+\mathcal{A}_1(\bm{\eta}_h,\bm{\eta}_h,z_{\bm{\eta}}^0)
+\mathcal{B}(z_\xi^0,z_{\bm{\eta}}^0)
+\mathcal{Q}(z_{\theta}^0,z_{\bm{\eta}}^0)=0,\\
&\mathcal{B}(z_\xi^0,z_{\bm{\eta}}^0)=0,\\
&e(z_{\theta}^0,z_{\theta}^0)+\mathcal{H}(\bm{\eta}_h,\theta_h,z_{\theta}^0)=0.
\end{aligned}
\end{array} \right.
\end{equation*}
By (\ref{eq:3.8}), (\ref{eq:3.12}) and (\ref{eq:3.15}), we have
\begin{equation} \label{eq:4.14}
\begin{aligned}
\|\nabla z_{\theta}^0\|_0
\le \sigma\lambda_{\psi}.
\end{aligned}
\end{equation}
Applying (\ref{eq:4.14}), (\ref{eq:3.4}), (\ref{eq:3.9}) and (\ref{eq:3.6}), the following inequality is drawn:
\begin{equation} \label{eq:4.15}
\begin{aligned}
\|z_{\bm{\eta}}^0\|_1 &\le \frac{\|L\|_*+\lambda_q\lambda_{\psi}}{ C_{\min}}\sigma.
\end{aligned}
\end{equation}
Using the discrete inf-sup condition (\ref{eq:3.13}), (\ref{eq:4.15}), (\ref{eq:3.3}), (\ref{eq:3.10}) and (\ref{eq:3.6}) gives that
\begin{equation} \label{eq:4.16}
\begin{aligned}
\beta^*\|z_{\xi}^0\|_0
&\le(C_{\max}+C_{\min})(\frac{\|L\|_*+\lambda_q\lambda_{\psi}}{C_{\min}})\sigma.
\end{aligned}
\end{equation}
The proof is completed.
\end{proof}

\begin{The}\textbf{(Stokes Iteration)} Under the conditions of Theorem 2.3, Assumptions 3.1 and 3.2, provided
\begin{equation} \label{eq:4.24}
\begin{aligned}
0<\sigma<\frac{1}{4},
\end{aligned}
\end{equation}
for $\forall n\ge0$. The Methold I is stable, the numerical solution $ (\bm{\eta}_h^n,\xi_h^n,\theta_h^n)$ defined by equations (\ref{eq:4.2}) satisfies:
\begin{equation} \label{eq:4.25}
\begin{aligned}
\|\bm{\eta}_h^n\|_1\le2\frac{\|L\|_*+\lambda_q\lambda_{\psi}}{ C_{\min}},\quad
\|\theta_h^n\|_1\le 2\lambda_{\psi}.
\end{aligned}
\end{equation}
Moreover, the iterative errors $z_{\bm{\eta}}^n, z_{\xi}^n$ and $z_{\theta}^n$ satisfy
\begin{equation} \label{eq:4.27}
\begin{aligned}
\|z_{\bm{\eta}}^n\|_1 \le (3\sigma)^n(\frac{\|L\|_*+\lambda_q\lambda_{\psi}}{C_{\min}}),\quad
\|\nabla z_{\theta}^n\|_0 \le (3\sigma)^n\lambda_{\psi},\quad
\|z_{\xi}^n\|_0 \le (\tilde{C})(3\sigma)^n(\frac{\|L\|_*+\lambda_q\lambda_{\psi}}{ C_{\min}})
\end{aligned}
\end{equation}
where $\tilde{C}=C_{\max}+2C_{\min}.$
\end{The}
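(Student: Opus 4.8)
The plan is to induct on $n$, proving the convergence estimates (\ref{eq:4.27}) first and then reading off the stability bounds (\ref{eq:4.25}) as a corollary. Indeed, once (\ref{eq:4.27}) is available, writing $C_0:=(\|L\|_*+\lambda_q\lambda_{\psi})/C_{\min}$ and using the a priori bounds (\ref{eq:3.15}) for the discrete solution, the triangle inequality gives $\|\bm{\eta}_h^n\|_1\le\|\bm{\eta}_h\|_1+\|z_{\bm{\eta}}^n\|_1\le C_0+(3\sigma)^nC_0$ and $\|\nabla\theta_h^n\|_0\le\lambda_{\psi}+(3\sigma)^n\lambda_{\psi}$; since $0<\sigma<\tfrac14$ forces $3\sigma<1$, one has $(3\sigma)^n\le1$ for all $n\ge0$, whence (\ref{eq:4.25}). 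The base case $n=0$ of (\ref{eq:4.27}) is contained in Lemma~4.1, since the bounds (\ref{eq:4.8}) carry an extra factor $\sigma<1$.

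For the inductive step, assume (\ref{eq:4.27}) at level $n-1$; by the corollary just described at level $n-1$ this gives in particular $\|\bm{\eta}_h^{n-1}\|_1\le2C_0$ and $\|\nabla\theta_h^{n-1}\|_0\le2\lambda_{\psi}$. Subtract the Stokes iteration (\ref{eq:4.2}) from the discrete problem (\ref{eq:3.2}), and rewrite the convective differences via the algebraic identities
\begin{equation*}
\mathcal{A}_1(\bm{\eta}_h,\bm{\eta}_h,\cdot)-\mathcal{A}_1(\bm{\eta}_h^{n-1},\bm{\eta}_h^{n-1},\cdot)=\mathcal{A}_1(z_{\bm{\eta}}^{n-1},\bm{\eta}_h,\cdot)+\mathcal{A}_1(\bm{\eta}_h^{n-1},z_{\bm{\eta}}^{n-1},\cdot),
\end{equation*}
\begin{equation*}
\mathcal{H}(\bm{\eta}_h,\theta_h,\cdot)-\mathcal{H}(\bm{\eta}_h^{n-1},\theta_h^{n-1},\cdot)=\mathcal{H}(z_{\bm{\eta}}^{n-1},\theta_h,\cdot)+\mathcal{H}(\bm{\eta}_h^{n-1},z_{\theta}^{n-1},\cdot),
\end{equation*}
which yields an error system for $(z_{\bm{\eta}}^n,z_{\xi}^n,z_{\theta}^n)$ with $z_{\bm{\eta}}^n\in\Upsilon_h$. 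Testing its temperature equation with $r_h=z_{\theta}^n$ and using coercivity (\ref{eq:3.8}), the boundedness (\ref{eq:3.12}) of $\mathcal{H}$, the a priori bound $\|\nabla\theta_h\|_0\le\lambda_{\psi}$ from (\ref{eq:3.15}) and $\|\bm{\eta}_h^{n-1}\|_1\le2C_0$ gives $\|\nabla z_{\theta}^n\|_0\le\lambda_2\lambda_{\psi}\|z_{\bm{\eta}}^{n-1}\|_1+2\lambda_2C_0\|\nabla z_{\theta}^{n-1}\|_0$; inserting the inductive bounds and the inequality $\lambda_2C_0\le\sigma$ (immediate from the definition of $\sigma$, whose last two terms sum to $\lambda_2C_0$) yields $\|\nabla z_{\theta}^n\|_0\le(3\sigma)^n\lambda_{\psi}$. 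Next, testing the momentum equation of the error system with $\bm{\Phi}_h=z_{\bm{\eta}}^n\in\Upsilon_h$, the $\mathcal{B}$-term drops out, and coercivity (\ref{eq:3.4}) together with the bounds (\ref{eq:3.10}), (\ref{eq:3.6}) and $\|\bm{\eta}_h\|_1\le C_0$, $\|\bm{\eta}_h^{n-1}\|_1\le2C_0$ gives $C_{\min}\|z_{\bm{\eta}}^n\|_1\le3\lambda_2C_0\|z_{\bm{\eta}}^{n-1}\|_1+\lambda_q\|\nabla z_{\theta}^n\|_0$; substituting the temperature estimate just obtained and the inductive hypothesis, and using the precise split $\sigma=\lambda_2C_0/C_{\min}+\lambda_2C_0$ together with $\lambda_q\lambda_{\psi}\le C_{\min}C_0$, one concludes $\|z_{\bm{\eta}}^n\|_1\le(3\sigma)^nC_0$.

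Finally, for the pressure/potential error, the momentum equation of the error system reads $\mathcal{B}(z_{\xi}^n,\bm{\Phi}_h)=-\mathcal{A}_0(z_{\bm{\eta}}^n,\bm{\Phi}_h)-\mathcal{A}_1(z_{\bm{\eta}}^{n-1},\bm{\eta}_h,\bm{\Phi}_h)-\mathcal{A}_1(\bm{\eta}_h^{n-1},z_{\bm{\eta}}^{n-1},\bm{\Phi}_h)-\mathcal{Q}(z_{\theta}^n,\bm{\Phi}_h)$ for all $\bm{\Phi}_h\in\bm{D}_h$; bounding the right-hand side by (\ref{eq:3.3}), (\ref{eq:3.10}), (\ref{eq:3.6}) and dividing by $\|\bm{\Phi}_h\|_1$, the discrete inf-sup condition (\ref{eq:3.13}) gives $\beta^*\|z_{\xi}^n\|_0\le C_{\max}\|z_{\bm{\eta}}^n\|_1+3\lambda_2C_0\|z_{\bm{\eta}}^{n-1}\|_1+\lambda_q\|\nabla z_{\theta}^n\|_0$, and the estimates already proved give the geometric decay with the stated constant $\tilde C=C_{\max}+2C_{\min}$.

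The delicate point—and the only one beyond the routine coercivity/continuity bookkeeping of Lemma~3.1—is keeping the contraction factor exactly equal to $3\sigma$ in the inductive step. Because the temperature error feeds into the velocity/current error estimate and conversely, the recursion is genuinely a coupled system in $(\|z_{\bm{\eta}}^n\|_1,\|\nabla z_{\theta}^n\|_0)$, and it is precisely in order to absorb the two convective contributions $\mathcal{A}_1(z_{\bm{\eta}}^{n-1},\bm{\eta}_h,\cdot)$ and $\mathcal{A}_1(\bm{\eta}_h^{n-1},z_{\bm{\eta}}^{n-1},\cdot)$ in the momentum equation, together with the two $\mathcal{H}$ contributions in the temperature equation, while still retaining a factor bounded by $3\sigma$, that the strong uniqueness condition $\sigma<\tfrac14$ (and not merely $\sigma<1$) is invoked; under $\sigma<1$ alone the same chain of estimates produces a factor that need not be smaller than $1$ and convergence is lost.
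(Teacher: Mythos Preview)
Your argument is essentially the same as the paper's: induction on $n$, subtract (\ref{eq:4.2}) from (\ref{eq:3.2}) to obtain the error system, test with $(z_{\bm\eta}^n,z_\xi^n,z_\theta^n)$, and combine coercivity/continuity from Lemma~3.1 with the inf--sup condition. The only organizational difference is that you derive the stability bounds (\ref{eq:4.25}) as a corollary of the convergence bounds (\ref{eq:4.27}) via the triangle inequality and (\ref{eq:3.15}), whereas the paper proves (\ref{eq:4.25}) directly by testing the iteration (\ref{eq:4.2}) itself with $(\bm\eta_h^n,\xi_h^n,\theta_h^n)$. Both orderings are valid; yours is arguably tidier.

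One point to tighten: in the momentum step you write $C_{\min}\|z_{\bm\eta}^n\|_1\le 3\lambda_2 C_0\|z_{\bm\eta}^{n-1}\|_1+\lambda_q\|\nabla z_\theta^n\|_0$ and then ``substitute the temperature estimate just obtained''. If you substitute the \emph{final} form $\|\nabla z_\theta^n\|_0\le(3\sigma)^n\lambda_\psi$, the two terms each contribute at most $(3\sigma)^n C_0$ and you pick up an unwanted factor $2$. To land exactly on $(3\sigma)^nC_0$ you must keep the \emph{intermediate} temperature bound $\|\nabla z_\theta^n\|_0\le 3\lambda_2 C_0(3\sigma)^{n-1}\lambda_\psi$ (before relaxing $\lambda_2 C_0\le\sigma$); then, with your ``precise split'' $\sigma=\lambda_2 C_0/C_{\min}+\lambda_2 C_0$ and $\lambda_q\lambda_\psi\le C_{\min}C_0$, the combined coefficient is $\tfrac{3\lambda_2}{C_{\min}}(C_0+\lambda_q\lambda_\psi)\le 3\sigma$, which gives the stated bound. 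The paper's displayed computation at (\ref{eq:4.30}) obscures this same point (the $\lambda_q\|\nabla z_\theta^n\|_0$ term vanishes between lines), so your write-up should make the sharper substitution explicit.
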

\begin{proof}
We will use mathematical induction to obtain the desired results.
As can be seen from Lemma 4.1, (\ref{eq:4.25}) and (\ref{eq:4.27})  hold for $n = 0$.
If equations (\ref{eq:4.25}), and (\ref{eq:4.27}) are valid for $n-1$, we will show that they also hold for $n$.
Taking $(\bm{\Phi}_h,\chi_h,r_h)=(\bm{\eta}_h^n,\xi_h^n,\theta_h^n)$, we have
\begin{equation*}
\left\{\begin{array}{c}
\begin{aligned}
&\mathcal{A}_0(\bm{\eta}_h^n,\bm{\eta}_h^n)+\mathcal{A}_1(\bm{\eta}_h^{n-1},\bm{\eta}_h^{n-1},\bm{\eta}_h^n)
+\mathcal{B}(\xi_h^n,\bm{\eta}_h^n)
+\mathcal{Q}(\theta_h^n,\bm{\eta}_h^n)=\langle L,\bm{\eta}_h^n \rangle,\\
&\mathcal{B}(\xi_h,\bm{\eta}_h^n)=0,\\
&e(\theta_h^n,\theta_h^n)+\mathcal{H}(\bm{\eta}_h^{n-1},\theta_h^{n-1},\theta_h^n)=\Psi(\theta_h^n).
\end{aligned}
\end{array} \right.
\end{equation*}
Applying (\ref{eq:3.7}) and (\ref{eq:3.12}), we further obtain
\begin{equation*}
\begin{aligned}
\|\nabla\theta_h^n\|_0  \le \lambda_{\psi}(1+4\lambda_2(\frac{\|L\|_*
+\lambda_q\lambda_{\psi}}{ C_{\min}}))
 \le 2\lambda_{\psi}.
\end{aligned}
\end{equation*}
And using (\ref{eq:3.3}), (\ref{eq:3.10}) and (\ref{eq:3.6}), it holds
\begin{equation*}
\begin{aligned}
\|\bm{\eta}_h^n\|_1 & \le \frac{\|L\|_*}{C_{\min}}+\frac{\lambda_2}{C_{\min}}\|\bm{\eta}_h^{n-1}\|_1^2+\frac{\lambda_q}{C_{\min}}\|\nabla \theta_h^n\|_0\\
& \le (\frac{\|L\|_*+\lambda_q\lambda_{\psi}}{ C_{\min}})+
4(\frac{\|L\|_*+\lambda_q\lambda_{\psi}}{ C_{\min}})(\frac{\lambda_2\|L\|_*}{C_{\min}^2}+\frac{\lambda_2\lambda_q\lambda_{\psi}}{ C_{\min}^2}+\frac{\lambda_2\lambda_q\lambda_{\psi}}{C_{\min}})\\
& \le 2\frac{\|L\|_*+\lambda_q\lambda_{\psi}}{ C_{\min}}.
\end{aligned}
\end{equation*}
As a consequence, we prove that inequalities (\ref{eq:4.25}) hold for $n\ge 0.$
Subtracting (\ref{eq:4.2}) from (\ref{eq:3.2}), the following result holds:
\begin{equation} \label{eq:4.28}
\left\{\begin{array}{c}
\begin{aligned}
&\mathcal{A}_0(z_{\bm{\eta}}^n,\bm{\Phi}_h)+\mathcal{A}_1(z_{\bm{\eta}}^{n-1},\bm{\eta}_h,\bm{\Phi}_h)
+\mathcal{A}_1(\bm{\eta}_h^{n-1},z_{\bm{\eta}}^{n-1},\bm{\Phi}_h)
+\mathcal{B}(z_{\xi}^n,\bm{\Phi}_h)+\mathcal{Q}(z_{\theta}^n,\bm{\Phi}_h)=0,\\
&\mathcal{B}(\chi_h,z_{\bm{\eta}}^n)=0,\\
&e(z_{\theta}^n,r_h)+\mathcal{H}(z_{\bm{\eta}}^{n-1},\theta_h,r_h)+\mathcal{H}(\bm{\eta}_h^{n-1},z_{\theta}^{n-1},r_h)=0.
\end{aligned}
\end{array} \right.
\end{equation}
Setting $r_h=z_{\theta}^n$, we have easily
\begin{equation*}
\begin{aligned}
e(z_{\theta}^n,z_{\theta}^n)+\mathcal{H}(z_{\bm{\eta}}^{n-1},\theta_h,z_{\theta}^n)
+\mathcal{H}(\bm{\eta}_h^{n-1},z_{\theta}^{n-1},z_{\theta}^n)=0.
\end{aligned}
\end{equation*}
By (\ref{eq:3.7}), (\ref{eq:3.12}), (\ref{eq:4.25}) and the second inequality of (\ref{eq:3.15}), we can get
\begin{equation} \label{eq:4.29}
\begin{aligned}
\|\nabla z_{\theta}^n\|_0
&\le \lambda_2\|z_{\bm{\eta}}^{n-1}\|_1\|\nabla \theta_h\|_0+\lambda_2\|\bm{\eta}_h^{n-1}\|_0\|\nabla z_{\theta}^{n-1}\|_0\\
& \le \lambda_{\psi} (3\sigma)^{n-1}(\frac{\lambda_2\|L\|_*}{ C_{\min}}+\frac{\lambda_2\lambda_q\lambda_{\psi}}{C_{\min}})
+2\lambda_{\psi}(3\sigma)^{n-1}(\frac{\lambda_2\|L\|_*}{ C_{\min}}+\frac{\lambda_2\lambda_q\lambda_{\psi}}{C_{\min}})\\
& \le \lambda_{\psi}(3\sigma)^n.
\end{aligned}
\end{equation}
Similarly choosing $(\bm{\Phi}_h,\chi_h)=(z_{\bm{\eta}}^n,z_{\xi}^n),$ we have
\begin{equation*}
\left\{\begin{array}{c}
\begin{aligned}
&\mathcal{A}_0(z_{\bm{\eta}}^n,z_{\bm{\eta}}^n)+\mathcal{A}_1(z_{\bm{\eta}}^{n-1},\bm{\eta}_h,z_{\bm{\eta}}^n)
+\mathcal{A}_1(\bm{\eta}_h^{n-1},z_{\bm{\eta}}^{n-1},z_{\bm{\eta}}^n)
+\mathcal{B}(z_{\xi}^n,z_{\bm{\eta}}^n)+Q(z_{\theta}^n,z_{\bm{\eta}}^n)=0,\\
&\mathcal{B}(z_{\xi}^n,z_{\bm{\eta}}^n)=0.\\
\end{aligned}
\end{array} \right.
\end{equation*}
Using (\ref{eq:3.4}), (\ref{eq:3.10}), (\ref{eq:3.6}), (\ref{eq:3.15}) and (\ref{eq:4.25}) leads to
\begin{equation} \label{eq:4.30}
\begin{aligned}
\|z_{\bm{\eta}}^n\|_1 & \le \frac{\lambda_2}{C_{\min}}\|z_{\bm{\eta}}^{n-1}\|\bm{\eta}_h\|_1
+\frac{\lambda_2}{C_{\min}}\|\bm{\eta}_h^{n-1}\|_1\|z_{\bm{\eta}}^{n-1}\|_1
+\frac{\lambda_q}{C_{\min}}\|\nabla z_{\theta}^n\|_0\\
& \le \frac{\lambda_2}{C_{\min}}(\frac{\|L\|_*+\lambda_q\lambda_{\psi}}{ C_{\min}})\|z_{\bm{\eta}}^{n-1}\|_1
+\frac{\lambda_2}{C_{\min}}(2\frac{\|L\|_*+\lambda_q\lambda_{\psi}}{ C_{\min}})\|z_{\bm{\eta}}^{n-1}\|_1\\
& \le (3\sigma)^{n}(\frac{\|L\|_*+\lambda_q\lambda_{\psi}}{ C_{\min}}).
\end{aligned}
\end{equation}
In term of (\ref{eq:4.28}), we arrive at
\begin{equation*}
\begin{aligned}
\mathcal{B}(z_{\xi}^n,\bm{\Phi}_h)=-\mathcal{A}_0(z_{\bm{\eta}}^n,\bm{\Phi}_h)
-\mathcal{A}_1(z_{\bm{\eta}}^{n-1},\bm{\eta}_h,\bm{\Phi}_h)-
\mathcal{A}_1(\bm{\eta}_h^{n-1},z_{\bm{\eta}}^{n-1},\bm{\Phi}_h)-\mathcal{Q}(z_{\theta}^n,\bm{\Phi}_h).\\
\end{aligned}
\end{equation*}
By combining the discrete inf-sup condition (\ref{eq:3.13}) with (\ref{eq:3.3}), (\ref{eq:3.10}) and (\ref{eq:3.7}), the following estimation can be obtained:
\begin{equation} \label{eq:4.31}
\begin{aligned}
\beta^*\|z_{\xi}^n\|_0 &\le C_{\max}\|z_{\bm{\eta}}^n\|_1+\lambda_2\|\bm{\eta}_h\|_1\|z_{\bm{\eta}}^{n-1}\|_1
+\lambda_2\|\bm{\eta}_h^{n-1}\|_1\|z_{\bm{\eta}}^{n-1}\|_1+\lambda_q\|\nabla z_{\theta}^n\|_0\\
&\le (\tilde{C})(3\sigma)^{n}(\frac{\|L\|_*+\lambda_q\lambda_{\psi}}{ C_{\min}}).
\end{aligned}
\end{equation}
As a consequence, we complete the proof.
\end{proof}
\begin{The}
\textbf{(Newton Iteration)} Under the conditions of Theorem 2.3, Assumptions 3.1 and 3.2. Provided
\begin{equation} \label{eq:4.32}
\begin{aligned}
0<\sigma<\frac{1}{3},
\end{aligned}
\end{equation}
for all $n\ge0$. The Methold II is stable, and the numerical solution $ (\bm{\eta}_h^n,\xi_h^n,\theta_h^n)$ defined by equation (\ref{eq:4.3}) satisfies:
\begin{equation} \label{eq:4.33}
\begin{aligned}
\|\bm{\eta}_h^n\|_1\le \frac{4}{3}\frac{\|L\|_*+\lambda_q\lambda_{\psi}}{ C_{\min}},\quad
\|\nabla\theta_h^n\|_0\le \frac{4}{3}(\lambda_{\psi}).
\end{aligned}
\end{equation}
Moreover, the iterative errors $z_{\bm{\eta}}^n, z_{\xi}^n$ and $z_{\theta}^n$ satisfy
\begin{equation}\label{eq:4.34}
\begin{aligned}
\|z_{\xi}^n\|_0 \le (\hat{C})(\frac{9}{5}\sigma)^{2^n-1}(\frac{\|L\|_*+\lambda_q\lambda_{\psi}}{ C_{\min}}),
\end{aligned}
\end{equation}
\begin{equation}\label{eq:4.35}
\begin{aligned}
\|z_{\xi}^n\|_0 \le (\hat{C})(\frac{9}{5}\sigma)^{2^n-1}(\frac{\|L\|_*+\lambda_q\lambda_{\psi}}{ C_{\min}}),
\end{aligned}
\end{equation}
where $\hat{C}=C_{\max}+\frac{22}{9}C_{\min}.$
\end{The}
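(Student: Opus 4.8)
The plan is to argue by induction on $n$, following the same template as the proof of Theorem~4.2 for the Stokes iteration, the new ingredient being that the Newton linearization makes the scheme quadratically convergent, which is why the exponent $2^{n}-1$ and the factor $\tfrac{9}{5}\sigma$ appear in (\ref{eq:4.34})--(\ref{eq:4.35}). Note first that $0<\sigma<\tfrac13$ gives $\tfrac95\sigma<\tfrac35<1$ and $(\tfrac95\sigma)^{2^{0}-1}=1$, so for $n=0$ the asserted bounds are contained in Lemma~4.1 (the constants are relaxed to $\tfrac43$ and $\hat C=C_{\max}+\tfrac{22}{9}C_{\min}$ so that the coarser initial estimates of (\ref{eq:4.5})--(\ref{eq:4.8}) fit). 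Assume then that (\ref{eq:4.33}) and (\ref{eq:4.34})--(\ref{eq:4.35}) hold at level $n-1$. Throughout we work entirely within the discrete spaces, with all iterates in $\Upsilon_h$, so that the nonconformity $\bm V_h\not\subset\bm V$ never produces consistency terms.

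\emph{Step 1 (stability bounds (\ref{eq:4.33})).} Take $(\bm\Phi_h,\chi_h,r_h)=(\bm\eta_h^n,\xi_h^n,\theta_h^n)$ in (\ref{eq:4.3}). The mixed term vanishes by $\mathcal B(\chi_h,\bm\eta_h^n)=0$; in the momentum line skew-symmetry (\ref{eq:3.9}) kills $\mathcal A_1(\bm\eta_h^{n-1},\bm\eta_h^n,\bm\eta_h^n)$, and the remaining Newton trilinear terms $\mathcal A_1(\bm\eta_h^n,\bm\eta_h^{n-1},\bm\eta_h^n)-\mathcal A_1(\bm\eta_h^{n-1},\bm\eta_h^{n-1},\bm\eta_h^n)$ collapse, again using (\ref{eq:3.9}), to $\mathcal A_1(\bm\eta_h^n-\bm\eta_h^{n-1},\bm\eta_h^{n-1},\bm\eta_h^n-\bm\eta_h^{n-1})$; an analogous reduction with (\ref{eq:3.11}) handles the temperature line. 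Bounding the survivors by (\ref{eq:3.10}), (\ref{eq:3.12}), using coercivity (\ref{eq:3.4}), (\ref{eq:3.8}), (\ref{eq:3.6}), inserting the induction hypotheses $\|\bm\eta_h^{n-1}\|_1\le\tfrac43\tfrac{\|L\|_*+\lambda_q\lambda_{\psi}}{C_{\min}}$ and $\|\nabla\theta_h^{n-1}\|_0\le\tfrac43\lambda_{\psi}$, recognising the combination $\tfrac{\lambda_2(\|L\|_*+\lambda_q\lambda_{\psi})}{C_{\min}^2}\le\sigma$, and using $\sigma<\tfrac13$ to absorb the terms carrying a factor $\|\bm\eta_h^n\|_1$ or $\|\nabla\theta_h^n\|_0$ on the right-hand side, one closes the two inequalities and obtains (\ref{eq:4.33}).

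\emph{Step 2 (error recursion and Steps for $z_\xi^n$).} Subtract (\ref{eq:4.3}) from (\ref{eq:3.2}) and test with $(z_{\bm\eta}^n,z_\xi^n,z_\theta^n)$. The key point is the algebraic identity expressing the Newton remainder $\mathcal A_1(\bm\eta_h,\bm\eta_h,\cdot)-\mathcal A_1(\bm\eta_h^{n-1},\bm\eta_h^n,\cdot)-\mathcal A_1(\bm\eta_h^n,\bm\eta_h^{n-1},\cdot)+\mathcal A_1(\bm\eta_h^{n-1},\bm\eta_h^{n-1},\cdot)$ in terms of $z_{\bm\eta}^{n-1},z_{\bm\eta}^n$ and $\bm\eta_h$: after testing with $z_{\bm\eta}^n$ the pieces containing $z_{\bm\eta}^n$ in the last two slots cancel by (\ref{eq:3.9}), leaving (up to sign) $\mathcal A_1(z_{\bm\eta}^n,\bm\eta_h,z_{\bm\eta}^n)$, $\mathcal A_1(z_{\bm\eta}^n,z_{\bm\eta}^{n-1},z_{\bm\eta}^n)$ and the genuinely quadratic term $\mathcal A_1(z_{\bm\eta}^{n-1},z_{\bm\eta}^{n-1},z_{\bm\eta}^n)$; the same identity with $\mathcal H$ and (\ref{eq:3.11}) applies in the temperature line. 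Using coercivity (\ref{eq:3.4}), (\ref{eq:3.8}), the bounds (\ref{eq:3.10}), (\ref{eq:3.12}), (\ref{eq:3.6}), the a~priori bound (\ref{eq:3.15}), and the level-$(n-1)$ error bounds, this yields a coupled pair of inequalities of the form $(C_{\min}-c_1)\|z_{\bm\eta}^n\|_1\le \tfrac{\lambda_2}{C_{\min}}\|z_{\bm\eta}^{n-1}\|_1^2+\tfrac{\lambda_q}{C_{\min}}\|\nabla z_\theta^n\|_0$ and $\|\nabla z_\theta^n\|_0\le \lambda_2(\lambda_\psi+\|\nabla z_\theta^{n-1}\|_0)\|z_{\bm\eta}^n\|_1+\lambda_2\|z_{\bm\eta}^{n-1}\|_1\|\nabla z_\theta^{n-1}\|_0$, where $c_1$ is small by $\sigma<\tfrac13$. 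Eliminating $\|\nabla z_\theta^n\|_0$ gives a single quadratic recursion $\|z_{\bm\eta}^n\|_1\lesssim\|z_{\bm\eta}^{n-1}\|_1^2+\|z_{\bm\eta}^{n-1}\|_1\|\nabla z_\theta^{n-1}\|_0$ (with the implied constant controlled by $\sigma$), which unrolls to $\|z_{\bm\eta}^n\|_1\le(\tfrac95\sigma)^{2^n-1}\tfrac{\|L\|_*+\lambda_q\lambda_{\psi}}{C_{\min}}$ and $\|\nabla z_\theta^n\|_0\le(\tfrac95\sigma)^{2^n-1}\lambda_{\psi}$; the bound $\tfrac95\sigma<1$ needed to run the unrolling is exactly where $\sigma<\tfrac13$ is used. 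Finally, from the momentum error equation $\mathcal B(z_\xi^n,\bm\Phi_h)=-\mathcal A_0(z_{\bm\eta}^n,\bm\Phi_h)-\mathcal A_1(z_{\bm\eta}^n,\bm\eta_h,\bm\Phi_h)-\mathcal A_1(z_{\bm\eta}^n,z_{\bm\eta}^{n-1},\bm\Phi_h)-\mathcal A_1(z_{\bm\eta}^{n-1},z_{\bm\eta}^{n-1},\bm\Phi_h)-\mathcal Q(z_\theta^n,\bm\Phi_h)$, the discrete inf-sup condition (\ref{eq:3.13}) together with (\ref{eq:3.3}), (\ref{eq:3.10}), (\ref{eq:3.7}) and the bounds just obtained gives (\ref{eq:4.34})--(\ref{eq:4.35}) with $\hat C=C_{\max}+\tfrac{22}{9}C_{\min}$, exactly as in Theorem~4.2, closing the induction.

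\emph{Main obstacle.} The routine part (boundedness, coercivity, inf-sup) is as in Theorem~4.2; the delicate point is Step~2: first the correct algebraic reduction of the four Newton-linearized trilinear terms to a quadratic remainder plus skew-symmetrically vanishing pieces, and then the bookkeeping needed to decouple the $(\|z_{\bm\eta}^n\|_1,\|\nabla z_\theta^n\|_0)$ system and unroll the resulting quadratic recursion into the form $(\tfrac95\sigma)^{2^n-1}$ while (i) verifying the base $\tfrac95\sigma$ stays below $1$ — this is precisely the role of $0<\sigma<\tfrac13$ — and (ii) tracking the absorbable factors carefully enough to land on the stated constants $\tfrac43$ and $\hat C$. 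The temperature–velocity coupling (through $\mathcal Q$ and $\mathcal H$) is what forces one to treat the two error quantities as a system rather than one at a time.
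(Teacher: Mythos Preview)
Your overall strategy matches the paper's: induction on $n$, the Newton-remainder identity producing the quadratic term $\mathcal{A}_1(z_{\bm\eta}^{n-1},z_{\bm\eta}^{n-1},\cdot)$ (and its $\mathcal H$ analogue), and the inf-sup argument for $z_\xi^n$. The error analysis in your Step~2 is essentially the paper's, modulo the harmless rewriting $\bm\eta_h^{n-1}=\bm\eta_h-z_{\bm\eta}^{n-1}$.

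However, the \emph{ordering} of your Step~1 and Step~2 creates a genuine gap. In Step~1 you correctly reduce the tested momentum line to the term $\mathcal{A}_1(\bm\eta_h^n-\bm\eta_h^{n-1},\bm\eta_h^{n-1},\bm\eta_h^n-\bm\eta_h^{n-1})$, bounded by $\lambda_2\|\bm\eta_h^{n-1}\|_1\,\|\bm\eta_h^n-\bm\eta_h^{n-1}\|_1^2$. This is \emph{not} a term ``carrying a factor $\|\bm\eta_h^n\|_1$'' that can be absorbed: if you estimate $\|\bm\eta_h^n-\bm\eta_h^{n-1}\|_1\le\|\bm\eta_h^n\|_1+\|\bm\eta_h^{n-1}\|_1$ you produce a \emph{quadratic} contribution in $\|\bm\eta_h^n\|_1$ on the right, and the left-hand side is only $C_{\min}\|\bm\eta_h^n\|_1^2$, so no small-$\sigma$ absorption closes the inequality. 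The same obstruction appears in the temperature line via $\mathcal H(\bm\eta_h^n-\bm\eta_h^{n-1},\theta_h^{n-1},\theta_h^n-\theta_h^{n-1})$.

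The paper avoids this by running the induction in the opposite order: using only the stability bound at level $n-1$, it first derives the error estimates $\|z_{\bm\eta}^n\|_1$, $\|\nabla z_\theta^n\|_0$, $\|z_\xi^n\|_0$ at level $n$ (your Step~2), and \emph{then} proves the stability bound (\ref{eq:4.33}) at level $n$ by writing $\bm\eta_h^n-\bm\eta_h^{n-1}=z_{\bm\eta}^{n-1}-z_{\bm\eta}^n$ and invoking the freshly proved error bounds (together with the $n=1$ case handled separately). If you swap your Steps~1 and~2 accordingly, your argument goes through and coincides with the paper's.
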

\begin{proof}
 In the proof of this theorem, the mathematical thought we use is consistent with that of the previous theorem.
By Lemma 4.1, (\ref{eq:4.33})-(\ref{eq:4.35}) hold for $n = 0$.
Assume (\ref{eq:4.33})-(\ref{eq:4.35}) are valid for $n-1$, we should show that they also hold for $n.$
\begin{equation} \label{eq:4.36}
\left\{\begin{array}{c}
\begin{aligned}
&\mathcal{A}_0(z_{\bm{\eta}}^n,\bm{\Phi}_h)+\mathcal{A}_1(z_{\bm{\eta}}^{n},\bm{\eta}_h^{n-1},\bm{\Phi}_h)
+\mathcal{A}_1(\bm{\eta}_h^{n-1},z_{\bm{\eta}}^n,\bm{\Phi}_h)
+\mathcal{A}_1(z_{\bm{\eta}}^{n-1},z_{\bm{\eta}}^{n-1},\bm{\Phi}_h)\\
&+\mathcal{B}(z_{\xi}^n,\bm{\Phi}_h)+Q(z_{\theta}^n,\bm{\Phi}_h)=0,\\
&\mathcal{B}(\chi_h,z_{\bm{\eta}}^n)=0,\\
&e(z_{\theta}^n,r_h)+\mathcal{H}(z_{\bm{\eta}}^{n},\theta_h^{n-1},r_h)+\mathcal{H}(\bm{\eta}_h^{n-1},z_{\theta}^{n},r_h)
+\mathcal{H}(z_{\bm{\eta}}^{n-1},z_{\theta}^{n-1},r_h)=0.
\end{aligned}
\end{array} \right.
\end{equation}
Setting $(\bm{\Phi}_h,\chi_h,r_h)=(z_{\bm{\eta}}^n,z_{\xi}^n,z_{\theta}^n)$, we directly obtain
\begin{equation*}
\left\{\begin{array}{c}
\begin{aligned}
&\mathcal{A}_0(z_{\bm{\eta}}^n,z_{\bm{\eta}}^n)+\mathcal{A}_1(z_{\bm{\eta}}^{n},\bm{\eta}_h^{n-1},z_{\bm{\eta}}^n)
+\mathcal{A}_1(\bm{\eta}_h^{n-1},z_{\bm{\eta}}^n,z_{\bm{\eta}}^n)
+\mathcal{A}_1(z_{\bm{\eta}}^{n-1},z_{\bm{\eta}}^{n-1},z_{\bm{\eta}}^n)+\mathcal{B}(z_{\xi}^n,z_{\bm{\eta}}^n)
+\mathcal{Q}(z_{\theta}^n,z_{\bm{\eta}}^n)=0,\\
&\mathcal{B}(z_{\xi}^n,z_{\bm{\eta}}^n)=0,\\
&e(z_{\theta}^n,z_{\theta}^n)+\mathcal{H}(z_{\bm{\eta}}^{n},\theta_h^{n-1},z_{\theta}^n)
+\mathcal{H}(\bm{\eta}_h^{n-1},z_{\theta}^{n},z_{\theta}^n)+\mathcal{H}(z_{\bm{\eta}}^{n-1},z_{\theta}^{n-1},z_{\theta}^n)=0.
\end{aligned}
\end{array} \right.
\end{equation*}
From (\ref{eq:3.8}) and (\ref{eq:3.12}), we deduce that
\begin{equation} \label{eq:4.37}
\begin{aligned}
\|\nabla z_{\theta}^n\|_0
& \le \frac{4\lambda_2\lambda_{\psi}}{3}\|z_{\bm{\eta}}^n\|_1+\lambda_2\|z_{\bm{\eta}}^{n-1}\|_1\|\nabla z_{\theta}^{n-1}\|_0.\\
\end{aligned}
\end{equation}
Analogously, applying (\ref{eq:3.3}), (\ref{eq:3.10}) and (\ref{eq:3.6}), we have
\begin{equation} \label{eq:4.38}
\begin{aligned}
\|z_{\bm{\eta}}^n\|_1 &\le (\frac{9}{5}\sigma)^{2^{n}-1}(\frac{\|L\|_*+\lambda_q\lambda_{\psi}}{ C_{\min}}).\\
\end{aligned}
\end{equation}
Then,
\begin{equation} \label{eq:4.39}
\begin{aligned}
\|\nabla z_{\theta}^n\|_0
& \le \frac{4\lambda_2\lambda_{\psi}}{3}\|z_{\bm{\eta}}^n\|_1+\lambda_2\|z_{\bm{\eta}}^{n-1}\|_1\|\nabla z_{\theta_h}^{n-1}\|_0\\
& \le
\frac{4\lambda_2\lambda_{\psi}}{3}(\frac{9}{5}\sigma)^{2^{n}-1}
(\frac{\|L\|_*+\lambda_q\lambda_{\psi}}{ C_{\min}})+
\lambda_2(\frac{9}{5}\sigma)^{2^{n-1}-1}
(\frac{\|L\|_*+\lambda_q\lambda_{\psi}}{ C_{\min}})(\frac{9}{5}\sigma)^{2^{n-1}-1}\lambda_{\psi}\\
& \le (\frac{9}{5}\sigma)^{2^{n}-1}\lambda_{\psi}.
\end{aligned}
\end{equation}
With the help of (\ref{eq:4.36}), we arrive at
\begin{equation*}
\begin{aligned}
\mathcal{B}(z_{\xi}^n,\bm{\Phi}_h)=-\mathcal{A}_0(z_{\bm{\eta}}^n,\bm{\Phi}_h)
-\mathcal{A}_1(z_{\bm{\eta}}^{n},\bm{\eta}_h^{n-1},\bm{\Phi}_h)
-\mathcal{A}_1(\bm{\eta}_h^{n-1},z_{\bm{\eta}}^n,\bm{\Phi}_h)-\mathcal{A}_1(z_{\bm{\eta}}^{n-1},z_{\bm{\eta}}^{n-1},\bm{\Phi}_h)
-\mathcal{Q}(z_{\theta}^n,\bm{\Phi}_h).\\
\end{aligned}
\end{equation*}
Then, by (\ref{eq:3.3}), (\ref{eq:3.10}) and ({\ref{eq:3.6}}), the discrete inf-sup condition (\ref{eq:3.13}), we derive
\begin{equation} \label{eq:4.40}
\begin{aligned}
\beta^*\|z_{\xi}^n\|_0
& \le C_{\max}\|z_{\bm{\eta}}^n\|_1+\lambda_q\|\nabla z_{\theta}^n\|_0+2\lambda_2\|z_{\bm{\eta}}^n\|_1\|\bm{\eta}_h^{n-1}\|_1+\lambda_2\|z_{\bm{\eta}}^{n-1}\|_1^2\\
& \le C_{\max}(\frac{9}{5}\sigma)^{2^{n}-1}(\frac{\|L\|_*+\lambda_q\lambda_{\psi}}{ C_{\min}})
+\lambda_q(\frac{9}{5}\sigma)^{2^{n}-1} \lambda_{\psi}\\
&+2\lambda_2(\frac{9}{5}\sigma)^{2^{n}-1}(\frac{\|L\|_*+\lambda_q\lambda_{\psi}}{ C_{\min}})\frac{4}{3}(\frac{\|L\|_*+\lambda_q\lambda_{\psi}}{C_{\min}})
 +\lambda_2(\frac{9}{5}\sigma)^{2^{n}-2}(\frac{\|L\|_*+\lambda_q\lambda_{\psi}}{ C_{\min}})^2\\
&\le (\hat{C})(\frac{9}{5}\sigma)^{2^{n}-1}(\frac{ \|L\|_*+\lambda_q\lambda_{\psi}}{ C_{\min}}).
\end{aligned}
\end{equation}
Next, it follows from (\ref{eq:4.3}) and (\ref{eq:4.1}) with $n = 1$ that
\begin{equation} \label{eq:4.41}
\left\{\begin{array}{c}
\begin{aligned}
&\mathcal{A}_0(\bm{\eta}_h^1-\bm{\eta}_h^0,\bm{\Phi}_h)+\mathcal{A}_1(\bm{\eta}_h^{0},\bm{\eta}_h^{1},\bm{\Phi}_h)
+\mathcal{A}_1(\bm{\eta}_h^{1},\bm{\eta}_h^{0},\bm{\Phi}_h)
-\mathcal{A}_1(\bm{\eta}_h^{0},\bm{\eta}_h^{0},\bm{\Phi}_h),\\
&+\mathcal{B}(\xi_h^1-\xi_h^0,\bm{\Phi}_h)+\mathcal{Q}(\theta_h^1-\theta_h^0,\bm{\Phi}_h)=0,\\
&\mathcal{B}(\chi_h,\bm{\eta}_h^1-\bm{\eta}_h^0)=0,\\
&e(\theta_h^1-\theta_h^0,r_h)+\mathcal{H}(\bm{\eta}_h^{0},\theta_h^{1},r_h)+\mathcal{H}(\bm{\eta}_h^{1},\theta_h^{0},r_h)
-\mathcal{H}(\bm{\eta}_h^{0},\theta_h^{0},r_h)=0.
\end{aligned}
\end{array} \right.
\end{equation}
Setting $(\bm{\Phi}_h,\chi_h,r_h)=(\bm{\eta}^1-\bm{\eta}^0,\xi_h^1-\xi_h^0,\theta_h^1-\theta_h^0) $, and using (\ref{eq:3.8}), (\ref{eq:3.12}) and (\ref{eq:3.11}) lead to
 \begin{equation} \label{eq:4.42}
\begin{aligned}
\|\nabla(\theta_h^1-\theta_h^0)\|_0 \le \lambda_2\|\bm{\eta}_h^1\|_1\|\nabla\theta_h^0\|_0.
\end{aligned}
\end{equation}
Useing (\ref{eq:3.4}), (\ref{eq:3.9}), (\ref{eq:3.10}) and (\ref{eq:3.6}), the following inequality holds:
\begin{equation} \label{eq:4.43}
\begin{aligned}
\|\bm{\eta}_h^1-\bm{\eta}_h^0\|_1
& \le \frac{\lambda_q}{C_{\min}}\lambda_2\|\bm{\eta}_h^1\|_1\|\nabla\theta_h^0\|_0
+\frac{\lambda_2}{C_{\min}}\|\bm{\eta}_h^1\|_1 \|\bm{\eta}_h^0\|_1 \\
& \le \sigma\|\bm{\eta}_h^1\|_1.
\end{aligned}
\end{equation}
Taking $n = 1$ in equation (\ref{eq:4.3}) and setting $(\bm{\Phi}_h,\chi_h,r_h)=(\bm{\eta}_h^1,\xi_h^1,\theta_h^1)$ give that
\begin{equation*}
\left\{\begin{array}{c}
\begin{aligned}
&\mathcal{A}_0(\bm{\eta}_h^1,\bm{\eta}_h^1)+\mathcal{A}_1(\bm{\eta}_h^{0},\bm{\eta}_h^{1},\bm{\eta}_h^1)
+\mathcal{A}_1(\bm{\eta}_h^{1},\bm{\eta}_h^{0},\bm{\eta}_h^1)
-\mathcal{A}_1(\bm{\eta}_h^{0},\bm{\eta}_h^{0},\bm{\eta}_h^1)+\mathcal{B}(\xi_h^1,\bm{\eta}_h^1)
+\mathcal{Q}(\theta_h^1,\bm{\eta}_h^1)=\langle L,\bm{\eta}_h^1 \rangle,\\
&\mathcal{B}(\xi_h^1,\bm{\eta}_h^1)=0,\\
&e(\theta_h^1,\theta_h^1)+\mathcal{H}(\bm{\eta}_h^{0},\theta_h^{1},\theta_h^1)
+\mathcal{H}(\bm{\eta}_h^{1},\theta_h^{0},\theta_h^1)
-\mathcal{H}(\bm{\eta}_h^{0},\theta_h^{0},\theta_h^1)=\Psi(\theta_h^1).
\end{aligned}
\end{array} \right.
\end{equation*}
We can get
\begin{equation} \label{eq:4.44}
\begin{aligned}
\|\bm{\eta}_h^1\|_1
 \le \frac{9}{8}\frac{\|L\|_*+\lambda_q\lambda_{\psi}}{ C_{\min}},\quad
\|\nabla\theta_h^1\|_0
 \le \frac{9\lambda_{\psi}}{8}.
\end{aligned}
\end{equation}
Setting $(\bm{\Phi}_h,\chi_h,r_h)=(\bm{\eta}_h^n,\xi_h^n,\theta_h^n)$  in equations (\ref{eq:4.3}) with $n \ge 2$ leads to
\begin{equation*}
\left\{\begin{array}{c}
\begin{aligned}
&\mathcal{A}_0(\bm{\eta}_h^n,\bm{\eta}_h^n)+\mathcal{A}_1(\bm{\eta}_h^{n-1},\bm{\eta}_h^{n},\bm{\eta}_h^n)
+\mathcal{A}_1(\bm{\eta}_h^{n},\bm{\eta}_h^{n-1},\bm{\eta}_h^n)
-\mathcal{A}_1(\bm{\eta}_h^{n-1},\bm{\eta}_h^{n-1},\bm{\eta}_h^n)+\mathcal{B}(\xi_h^n,\bm{\eta}_h^n)
+\mathcal{Q}(\theta_h^n,\bm{\eta}_h^n)=\langle L,\bm{\eta}_h^n \rangle,\\
&\mathcal{B}(\xi_h^n,\bm{\eta}_h^n)=0,\\
&e(\theta_h^n,\theta_h^n)+\mathcal{H}(\bm{\eta}_h^{n-1},\theta_h^{n},\theta_h^n)+\mathcal{H}(\bm{\eta}_h^{n},\theta_h^{n-1},\theta_h^n)
-\mathcal{H}(\bm{\eta}_h^{n-1},\theta_h^{n-1},\theta_h^n)=\Psi(\theta_h^n).
\end{aligned}
\end{array} \right.
\end{equation*}
By (\ref{eq:3.8}), (\ref{eq:3.11}) and (\ref{eq:3.12}), we deduce
\begin{equation} \label{eq:4.45}
\begin{aligned}
\|\nabla\theta_h^n\|_0
 & \le \lambda_{\psi}+\lambda_2\|\bm{\eta}_h^n-\bm{\eta}_{h}^{n-1}\|_1\|\nabla(\theta_h^{n-1}
 -\theta_h^n)\|_0\\
& \le \lambda_{\psi}+\lambda_2\|z_{\bm{\eta}}^n-z_{\bm{\eta}}^{n-1}\|_1\|\nabla(z_{\theta}^{n-1}
-z_{\theta}^n)\|_0\\
& \le \frac{4}{3}\lambda_{\psi}.
\end{aligned}
\end{equation}
Applying (\ref{eq:3.4}), (\ref{eq:3.9}), (\ref{eq:3.10}) and (\ref{eq:3.6}), we have
\begin{equation} \label{eq:4.46}
\begin{aligned}
\|\bm{\eta}_h^n\|_1 & \le \frac{\|L\|_*}{C_{\min}}+\frac{\lambda_q}{C_{\min}}\|\nabla \theta_h^n\|_0+\frac{\lambda_2}{C_{\min}}\|\bm{\eta}_h^n-\bm{\eta}_h^{n-1}\|_1^2\\
& \le \frac{\|L\|_*+\lambda_q\lambda_{\psi}}{C_{\min}}(1+\frac{1}{3}((\frac{3}{5})^3+(\frac{3}{5}))^2)\\
& \le \frac{4}{3}\frac{\|L\|_*+\lambda_q\lambda_{\psi}}{C_{\min}}.
\end{aligned}
\end{equation}
The proof is completed.
\end{proof}

\begin{The}\textbf{(Oseen Iteration)}
Under the conditions of Theorem 2.3, Assumptions 3.1 and 3.2, the
Methold III is unconditional stable. and satisfies the following estimates
\begin{equation*}
\begin{aligned}
\|\nabla \theta_h^n\|_0\le \lambda_{\psi}, \quad
\|\bm{\eta}_h^n\|_1\le\frac{ \|L\|_*+\lambda_{\psi}\lambda_q}{C_{\min}},
\end{aligned}
\end{equation*}
and satisfy the following bounds
\begin{equation*}
\begin{aligned}
&\|z_{\bm{\eta}}^n\|_1 \le \sigma^{n+1}(\frac{\|L\|_*+\lambda_q\lambda_{\psi}}{ C_{\min}}), \quad
\|\nabla z_{\theta}^{n}\|_0 \le \sigma^{n+1}\lambda_{\psi}, \quad
 \beta^*\|z_{\xi}^n\|  \le (C_{\max}+3C_{\min})(\frac{\|L\|_*+\lambda_q\lambda_{\psi}}{ C_{min}}).
\end{aligned}
\end{equation*}
\end{The}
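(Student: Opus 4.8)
The plan is to follow the inductive template already used for the Stokes and Newton theorems, but exploiting the one structural feature that distinguishes Method III: when the Oseen-linearized equations (\ref{eq:4.4}) are tested against their own solution the convection terms still vanish by skew-symmetry, so the stability bounds require no smallness beyond the standing hypothesis $\sigma<1$ of Theorem 2.3 — this is exactly what ``unconditionally stable'' should mean. Throughout I would write $C_0:=(\|L\|_*+\lambda_q\lambda_{\psi})/C_{\min}$ and use that the case $n=0$ of every claimed estimate is supplied by Lemma 4.1 (the stability bounds (\ref{eq:4.5}) and the initial error bounds (\ref{eq:4.8})).

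\emph{Stability.} For $n\ge1$ this step needs no induction. In the temperature equation of (\ref{eq:4.4}) I would take $r_h=\theta_h^n$: by (\ref{eq:3.11}) the term $\mathcal{H}(\bm{\eta}_h^{n-1},\theta_h^n,\theta_h^n)$ drops, so (\ref{eq:3.8}) gives $\|\nabla\theta_h^n\|_0\le\lambda_{\psi}$. Then, taking $(\bm{\Phi}_h,\chi_h)=(\bm{\eta}_h^n,\xi_h^n)$ in the first two equations, the term $\mathcal{A}_1(\bm{\eta}_h^{n-1},\bm{\eta}_h^n,\bm{\eta}_h^n)$ drops by (\ref{eq:3.9}) and $\mathcal{B}(\xi_h^n,\bm{\eta}_h^n)=0$, so coercivity (\ref{eq:3.4}) together with (\ref{eq:3.6}) and the temperature bound yields $\|\bm{\eta}_h^n\|_1\le C_0$. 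Note no bound on $\bm{\eta}_h^{n-1}$ is invoked, which is the source of the unconditional character.

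\emph{Error estimates for $\bm{\eta}$ and $\theta$.} I would subtract (\ref{eq:4.4}) from (\ref{eq:3.2}) and substitute $\bm{\eta}_h^{n-1}=\bm{\eta}_h-z_{\bm{\eta}}^{n-1}$, $\bm{\eta}_h^n=\bm{\eta}_h-z_{\bm{\eta}}^n$, $\theta_h^n=\theta_h-z_\theta^n$. The momentum error equation then reads
\begin{equation*}
\mathcal{A}_0(z_{\bm{\eta}}^n,\bm{\Phi}_h)+\mathcal{A}_1(\bm{\eta}_h,z_{\bm{\eta}}^n,\bm{\Phi}_h)+\mathcal{A}_1(z_{\bm{\eta}}^{n-1},\bm{\eta}_h,\bm{\Phi}_h)-\mathcal{A}_1(z_{\bm{\eta}}^{n-1},z_{\bm{\eta}}^n,\bm{\Phi}_h)+\mathcal{B}(z_\xi^n,\bm{\Phi}_h)+\mathcal{Q}(z_\theta^n,\bm{\Phi}_h)=0,
\end{equation*}
with $\mathcal{B}(\chi_h,z_{\bm{\eta}}^n)=0$ so that $z_{\bm{\eta}}^n\in\Upsilon_h$, while the temperature error is $e(z_\theta^n,r_h)+\mathcal{H}(\bm{\eta}_h,z_\theta^n,r_h)+\mathcal{H}(z_{\bm{\eta}}^{n-1},\theta_h,r_h)-\mathcal{H}(z_{\bm{\eta}}^{n-1},z_\theta^n,r_h)=0$. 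Testing the latter with $r_h=z_\theta^n$ kills the first and third $\mathcal{H}$-terms, and (\ref{eq:3.8}), (\ref{eq:3.12}) and $\|\nabla\theta_h\|_0\le\lambda_{\psi}$ from (\ref{eq:3.15}) give $\|\nabla z_\theta^n\|_0\le\lambda_2\lambda_{\psi}\|z_{\bm{\eta}}^{n-1}\|_1$. Testing the momentum equation with $\bm{\Phi}_h=z_{\bm{\eta}}^n$ kills the first and third $\mathcal{A}_1$-terms and the $\mathcal{B}$-term, so (\ref{eq:3.4}), (\ref{eq:3.10}), (\ref{eq:3.6}) and $\|\bm{\eta}_h\|_1\le C_0$ give $C_{\min}\|z_{\bm{\eta}}^n\|_1\le\lambda_2 C_0\|z_{\bm{\eta}}^{n-1}\|_1+\lambda_q\|\nabla z_\theta^n\|_0$; substituting the temperature bound, the coefficient of $\|z_{\bm{\eta}}^{n-1}\|_1$ equals $\tfrac{\lambda_2\|L\|_*}{C_{\min}^2}+\tfrac{\lambda_2\lambda_q\lambda_{\psi}}{C_{\min}^2}+\tfrac{\lambda_2\lambda_q\lambda_{\psi}}{C_{\min}}$, a partial sum of the four nonnegative terms of $\sigma$, hence $\le\sigma$. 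Thus $\|z_{\bm{\eta}}^n\|_1\le\sigma\|z_{\bm{\eta}}^{n-1}\|_1$, and with $\|z_{\bm{\eta}}^0\|_1\le\sigma C_0$ from (\ref{eq:4.8}) induction gives $\|z_{\bm{\eta}}^n\|_1\le\sigma^{n+1}C_0$; then $\|\nabla z_\theta^n\|_0\le\lambda_2\lambda_{\psi}\|z_{\bm{\eta}}^{n-1}\|_1\le\sigma^{n+1}\lambda_{\psi}$ because $\tfrac{\lambda_2\|L\|_*}{C_{\min}}+\tfrac{\lambda_2\lambda_q\lambda_{\psi}}{C_{\min}}\le\sigma$.

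\emph{Error for the pressure/potential, and the main obstacle.} For general $\bm{\Phi}_h$ I would keep only $\mathcal{B}(z_\xi^n,\bm{\Phi}_h)$ on the left of the momentum error equation, bound the remaining terms by the continuity estimates (\ref{eq:3.3}), (\ref{eq:3.10}), (\ref{eq:3.6}), divide by $\|\bm{\Phi}_h\|_1$, and apply the discrete inf-sup condition (\ref{eq:3.13}) to get
\begin{equation*}
\beta^*\|z_\xi^n\|_0\le C_{\max}\|z_{\bm{\eta}}^n\|_1+\lambda_2\|\bm{\eta}_h\|_1\|z_{\bm{\eta}}^n\|_1+\lambda_2\|z_{\bm{\eta}}^{n-1}\|_1\|\bm{\eta}_h\|_1+\lambda_2\|z_{\bm{\eta}}^{n-1}\|_1\|z_{\bm{\eta}}^n\|_1+\lambda_q\|\nabla z_\theta^n\|_0.
\end{equation*}
Inserting the bounds just proved, together with $\lambda_2 C_0\le\sigma<1$ and $\lambda_q\lambda_{\psi}\le\|L\|_*+\lambda_q\lambda_{\psi}=C_{\min}C_0$, each summand is dominated by a multiple of $C_0$ (in fact carrying a further factor $\sigma^{n+1}$), which collapses to the claimed bound $\beta^*\|z_\xi^n\|_0\le(C_{\max}+3C_{\min})C_0$. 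I expect the substantive point to be conceptual — recognizing that the Oseen linearization retains its skew-symmetric structure when tested against its own solution (unconditional stability) and that the contraction factor is automatically a sub-sum of $\sigma$, so nothing is needed beyond $\sigma<1$ — whereas the only genuinely fiddly part is the bookkeeping of constants in the $z_\xi^n$ estimate.
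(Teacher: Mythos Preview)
Your approach is essentially the paper's: test (\ref{eq:4.4}) against its own solution to get the stability bounds via skew-symmetry (no induction, no smallness needed), subtract (\ref{eq:4.4}) from (\ref{eq:3.2}) and test the error equations against $(z_{\bm\eta}^n,z_\xi^n,z_\theta^n)$ to get $\|\nabla z_\theta^n\|_0\le\lambda_2\lambda_\psi\|z_{\bm\eta}^{n-1}\|_1$ and then $\|z_{\bm\eta}^n\|_1\le\sigma\|z_{\bm\eta}^{n-1}\|_1$, and finally read off $z_\xi^n$ via the discrete inf--sup condition.

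The one substantive difference is your splitting of the convection error. The paper writes $\mathcal{A}_1(\bm\eta_h,\bm\eta_h,\cdot)-\mathcal{A}_1(\bm\eta_h^{n-1},\bm\eta_h^n,\cdot)=\mathcal{A}_1(z_{\bm\eta}^{n-1},\bm\eta_h,\cdot)+\mathcal{A}_1(\bm\eta_h^{n-1},z_{\bm\eta}^n,\cdot)$, keeping $\bm\eta_h^{n-1}$ intact; you instead expand $\bm\eta_h^{n-1}=\bm\eta_h-z_{\bm\eta}^{n-1}$ and obtain the equivalent three-term form with an extra quadratic piece $-\mathcal{A}_1(z_{\bm\eta}^{n-1},z_{\bm\eta}^n,\cdot)$. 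For the $z_{\bm\eta}^n$ estimate this is harmless (both extra terms vanish by skew-symmetry), but in the $z_\xi^n$ bound your five-term right-hand side does \emph{not} ``collapse'' to $(C_{\max}+3C_{\min})C_0$: using $\lambda_2 C_0\le\sigma$ gives no $C_{\min}$ at all, and even the sharper partial-sum inequality $\lambda_2 C_0\le C_{\min}\sigma$ (from the first two summands of $\sigma$) yields $(C_{\max}+4C_{\min})\sigma^{n+1}C_0$. To recover the stated constant you should recombine your second and fourth terms back into the single $\mathcal{A}_1(\bm\eta_h^{n-1},z_{\bm\eta}^n,\cdot)$, bound it once by $\lambda_2\|\bm\eta_h^{n-1}\|_1\|z_{\bm\eta}^n\|_1$, and then use $\lambda_2 C_0\le C_{\min}\sigma$ on the three remaining non-$C_{\max}$ terms --- exactly the paper's count.
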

\begin{proof}
Considering model (\ref{eq:4.4}) and setting  $(\bm{\Phi}_h,\chi_h,r_h)=(\bm{\eta}_h^n,\xi_h^n,\theta_h^n),$ we can arrive at
\begin{equation*}
\left\{\begin{array}{c}
\begin{aligned}
&\mathcal{A}_0(\bm{\eta}_h^n,\bm{\eta}_h^n)+\mathcal{A}_1(\bm{\eta}_h^{n-1},\bm{\eta}_h^{n},\bm{\eta}_h^n)
+\mathcal{B}(\xi_h^n,\bm{\eta}_h^n)
+\mathcal{Q}(\theta_h^n,\bm{\eta}_h^n)=\langle L,\bm{\eta}_h^n \rangle,\\
&\mathcal{B}(\xi_h^n,\bm{\eta}_h^n)=0,\\
&e(\theta_h^n,\theta_h^n)+\mathcal{H}(\bm{\eta}_h^{n-1},\theta_h^{n},\theta_h^n)=\Psi(\theta_h^n).
\end{aligned}
\end{array} \right.
\end{equation*}
By (\ref{eq:3.8}) and (\ref{eq:3.11}), we obtain easily
\begin{equation} \label{eq:4.17}
\begin{aligned}
 \|\nabla\theta_h^n\|_0\le\lambda_{\psi}.
\end{aligned}
\end{equation}
 Similarly in terms of (\ref{eq:4.17}), while applying (\ref{eq:3.4}), (\ref{eq:3.9}) and (\ref{eq:3.6}), we can also get
\begin{equation} \label{eq:4.18}
\begin{aligned}
\|\bm{\eta}_h^n\|_1 \le \frac{\|L\|_*+\lambda_{\psi}\lambda_{q}}{C_{\min}}.
\end{aligned}
\end{equation}
In the next step, we give the estimation of the error bounds. Subtracting
(\ref{eq:4.4}) from (\ref{eq:3.2}), we deduce the following result
\begin{equation*}
\left\{\begin{array}{c}
\begin{aligned}
&\mathcal{A}_0(z_{\bm{\eta}}^n,\bm{\Phi}_h)+\mathcal{A}_1(z_{\bm{\eta}}^{n-1},\bm{\eta}_h,\bm{\Phi}_h)
+\mathcal{A}_1(\bm{\eta}_h^{n-1},z_{\bm{\eta}}^{n},\bm{\Phi}_h)
+\mathcal{B}(z_{\xi}^n,\bm{\Phi}_h)+\mathcal{Q}(z_{\theta}^n,\bm{\Phi}_h)=0,\\
&\mathcal{B}(\chi_h,z_{\bm{\eta}}^n)=0,\\
&e(z_{\theta}^n,r_h)+\mathcal{H}(z_{\bm{\eta}}^{n-1},\theta_h,r_h)+\mathcal{H}(\bm{\eta}_h^{n-1},z_{\theta}^n,r_h)=0.
\end{aligned}
\end{array} \right.
\end{equation*}
Setting $r_h=z_{\theta}^n$, it holds:
\begin{equation*}
\begin{aligned}
e(z_{\theta}^n,z_{\theta}^n)+\mathcal{H}(z_{\bm{\eta}}^{n-1},\theta_h,z_{\theta}^n)=0.
\end{aligned}
\end{equation*}
Using (\ref{eq:3.7}), (\ref{eq:3.12}) and the second inequality of (\ref{eq:2.16}) yields
\begin{equation} \label{eq:4.20}
\begin{aligned}
\|\nabla z_{\theta}^n\|_0
&\le\lambda_2\lambda_{\psi}\|\nabla z_{\bm{\eta}}^{n-1}\|_0.
\end{aligned}
\end{equation}
Choosing $(\bm{\Phi}_h,\chi_h)=(z_{\bm{\eta}}^n,z_{\xi}^n)$, we have
\begin{equation*}
\left\{\begin{array}{c}
\begin{aligned}
&\mathcal{A}_0(z_{\bm{\eta}}^n,z_{\bm{\eta}}^n)+\mathcal{A}_1(z_{\bm{\eta}}^{n-1},\bm{\eta}_h,z_{\bm{\eta}}^n)
+\mathcal{A}_1(\bm{\eta}_h^{n-1},z_{\bm{\eta}}^{n},z_{\bm{\eta}}^n)
+\mathcal{B}(z_{\xi}^n,z_{\bm{\eta}}^n)+\mathcal{Q}(z_{\theta}^n,z_{\bm{\eta}}^n)=0,\\
&\mathcal{B}(z_{\xi}^n,z_{\bm{\eta}}^n)=0.\\
\end{aligned}
\end{array} \right.
\end{equation*}
Combining (\ref{eq:3.4}), (\ref{eq:3.9}), (\ref{eq:3.10}) and (\ref{eq:3.6}), we arrive at
\begin{equation*}
\begin{aligned}
C_{\min}\|z_{\bm{\eta}}^n\|_1 \le \lambda_2\|z_{\bm{\eta}}^{n-1}\|_1\|\bm{\eta}_h\|_1+\lambda_{q}\|\nabla z_{\theta}^n\|_0.
\end{aligned}
\end{equation*}
By the first inequality of (\ref{eq:3.15}) and (\ref{eq:4.20}), we can obtain
\begin{equation} \label{eq:4.21}
\begin{aligned}
\|z_{\bm{\eta}}^n\|_1
&\le \sigma^n\|z_{\bm{\eta}}^0\|_1.
\end{aligned}
\end{equation}
Using (\ref{eq:4.8}), (\ref{eq:4.21}) and (\ref{eq:4.20}) can yield
\begin{equation} \label{eq:4.22}
\begin{aligned}
\|z_{\bm{\eta}}^n\|_1 \le \sigma^{n+1}(\frac{\|L\|_*+\lambda_q\lambda_{\psi}}{ C_{\min}}),\quad
\|\nabla z_{\theta}^{n}\|_0
\le \sigma^{n+1}\lambda_{\psi}.
\end{aligned}
\end{equation}
By the equation $$\mathcal{B}(z_{\xi}^n,\bm{\Phi}_h)=-\mathcal{A}_0(z_{\bm{\eta}}^n,\bm{\Phi}_h)-\mathcal{A}_1(z_{\bm{\eta}}^{n-1},\bm{\eta}_h,\bm{\Phi}_h)
-\mathcal{A}_1(\bm{\eta}_h^{n-1},z_{\bm{\eta}}^{n},\bm{\Phi}_h)-\mathcal{Q}(z_{\theta}^n,\bm{\Phi}_h),$$
and using (\ref{eq:3.3}), (\ref{eq:3.10}), (\ref{eq:3.6}) and (\ref{eq:3.13}), we further have
\begin{equation*}
\begin{aligned}
\beta^*\|z_{\xi}^n\| \le (C_{\max}+3C_{\min})\sigma^{n+1}(\frac{ \|L\|_*+\lambda_q\lambda_{\psi}}{C_{\min}}).
\end{aligned}
\end{equation*}
The proof is completed.
\end{proof}

\begin{rem}
Under the condition $0<\sigma<\frac{1}{4},$ Methods I-III are stable. Under the $0<\sigma<\frac{1}{3},$ Methods II and III are stable. And only Method III is unconditionally stable under the condition $0<\sigma<1.$ Hence, among our proposed methods, method III has the best stability.
\end{rem}

\begin{rem}
The convergence rates of Method I and Method III  respectively are : $\|z_{\eta}^n\|_1 \le (3\sigma)^n(\frac{\|L\|_*+\lambda_q\lambda_{\psi}}{C_{min}})$ and $\|z_{\eta}^n\|_1 \le \sigma^{n+1}(\frac{\|L\|_*+\lambda_q\lambda_{\psi}}{ C_{min}})$ which are linear convergent. Besides, the convergence rate of Method II is $\|z_{\eta}^n\|_1 \le (\frac{9}{5}\sigma)^{2^n-1}(\frac{\|L\|_*+\lambda_q\lambda_{\psi}}{ C_{min}})$  which is quadratic convergent. Therefore, among our proposed methods, Method II converges fastest.
\end{rem}

\section{Numerical experiments}
In this section, we evaluate the performance of the algorithms given in this paper through four numerical examples. In the first numerical example, we consider 2D/3D exact solution problem to verify the convergence rate of the iterative algorithms. The second one is the 2D singular solution problem to verify the convergence performance in a non-convex L-shaped domain. The thermal driven cavity problem is  presented in the third numerical example. In the last example, we consider a B$\grave{e}$nard convection problem.

For the spatial finite element discretizations , the velocity $\bm{u}$ and pressure $p$ are approximated by the Mini-elements, while we choose the lowest-order Raviart-Thomas element combined with the discontinuous and piecewise constant $P_0$ element to approximate current density $\bm{J}$ and electric potential $\phi$.
\subsection{Problems with smooth solutions}

The purpose of this example is to verify the convergence rate of the finite element solutions in $\Omega=(0,1)^d,d=2,3$. The parameters $Ra,Pr,\kappa$ are simply set to 1 and $\bm{B}=(0,0,1)^T$. The right-hand terms $\bm{f},\bm{g},\varphi$ can be given by the following exact solution : for $d=2$
\begin{equation*}
\left\{\begin{array}{c}
\begin{aligned}
&u_1=2\pi(\sin(\pi x))^2\cos(\pi y)\sin(\pi y), \quad u_2=-2\pi(\sin(\pi y))^2\cos(\pi x)\sin(\pi x),\\
&J_1=-1/20\pi \sin(\pi x)\cos(\pi y)\cos(\pi z),\quad J_2=1/10\pi \cos(\pi x)\sin(\pi y)\cos(\pi z),\\
&p=\cos(\pi y)\cos(\pi x),\quad \phi=x-1/2, \quad \theta=u_1+u_2.
\end{aligned}
\end{array} \right.
\end{equation*}
and for $d=3$
\begin{equation*}
\left\{\begin{array}{c}
\begin{aligned}
&u_1=-1/20\pi (\sin(\pi x))^2\sin(\pi y)\cos(\pi y)\sin(\pi z)\cos(\pi z), \\
&u_2=-1/10\pi \sin(\pi x)\cos(\pi x)(\sin(\pi y))^2\sin(\pi z)\cos(\pi z),\\
&u_3=-1/20\pi \sin(\pi x)\cos(\pi x)\sin(\pi y)\cos(\pi y)(\sin(\pi z))^2,\\
&p=1/10\cos(\pi x)\cos(\pi y)\cos(\pi z),\quad \phi =1/10\sin(\pi x)\sin(\pi y)\sin(\pi z),\\
&J_1=-1/20\pi \sin(\pi x)\cos(\pi y)\cos(\pi z),\quad J_2=1/10\pi \cos(\pi x)\sin(\pi y)\cos(\pi z)\\
&J_3=-1/20\pi \cos(\pi x)\cos(\pi y)\sin(\pi z),\quad \theta=u_1+u_2+u_3.
\end{aligned}
\end{array} \right.
\end{equation*}
Here the $j$-$th$ components of $\bm{u}$ and $\bm{J}$ are given by $u_j$ and $J_i$
, respectively. The numerical results are given in Tables 1-3 for 2D and Tables 4-6 for 3D. From the numerical result, we have the following points to explain:

First, we can further observe that the corresponding error of all variables is $O(h)$. This means that the optimal convergence rates of all variables reaches the optimum, which justify our theoretical analysis well.

Second, among the three iterative methods, Method I is obviously the fastest and Method II is the slowest and the errors of the three different iterative methods are the same almost.

Third, according to Proposition 7, the discrete current density has  exactly no divergence at all, but we can see that the approximate solution yields $\|\nabla \cdot J_h\|_0 $ in the order of $10^{-11}$ with almost no divergence from the tables.  The numerical integration error and rounding error cause $\|\nabla \cdot J_h\|_0 $ not to be exactly 0.

Finally, we can see from Figure 1 that Method I is applicable to small Rayleigh numbers. Moreover, Method II can handle some medium Rayleigh numbers and Method III can solve the steady thermally coupled  inductionless MHD equation with large Rayleigh number.

\begin{table}
	\centering
	\caption{Numerical results in 2D for Method I}
    \resizebox{\textwidth}{!}{
	\scalebox{1}{
		\begin{tabular}{c c c c c c c c c}
			\hline
			 h & $\|\nabla \bm{u}-\nabla \bm{u}_h\|_0$  & $\|p-p_h\|_0$  & $\|\bm{J}-\bm{J}_h\|_{div}$  & $\|\phi-\phi_h\|_0$ & $\|\nabla\theta-\nabla\theta_h\|_0$ & $\|\nabla\cdot\bm{J}\|_0$ & CPU(s) \\
			\hline
			 1/16 & 2.12(-  -) & 6.39e-1(-  -) & 6.92e-2(-  -)  & 1.62e-2(-  -)  &
1.39(-  -)& 2.04e-11 & 1.74\\
			 1/32 & 1.06(1.00) & 2.11e-1(1.60) &  3.47e-2(1.00) & 7.56e-3(1.10)  &
6.96e-1(1.00) & 2.04e-11 & 8.96\\
			 1/64 & 5.28e-1(1.00) & 7.27e-2(1.54) & 1.74e-2(1.00)   & 3.71e-3(1.03) &
3.48e-1(1.00) & 2.04e-11 & 39.71\\
			 1/128 & 2.64e-1(1.00) & 2.54e-2(1.52)  & 8.68e-3(1.00) & 1.84e-3(1.01) &
1.74e-1(1.00) & 2.13e-11 & 174.37\\
			\hline
	\end{tabular} }
}
\end{table}

\begin{table}
	\centering
	\caption{Numerical results in 2D for Method II}
\resizebox{\textwidth}{!}{
	\scalebox{1}{
		\begin{tabular}{c c c c c c c c c }
			\hline
			 h & $\|\nabla \bm{u}-\nabla \bm{u}_h\|_0$  & $\|p-p_h\|_0$  & $\|\bm{J}-\bm{J}_h\|_{div}$  & $\|\phi-\phi_h\|_0$ & $\|\nabla\theta-\nabla\theta_h\|_0$ & $\|\nabla\cdot\bm{J}\|_0$ & CPU(s) \\
			\hline
			 1/16 & 2.12(-  -) & 6.39e-1(-  -) & 6.92e-2(-  -)  & 1.62e-2(-  -)  &
1.39(-  -)& 2.04e-11 & 0.94\\
			 1/32 & 1.06(1.00) & 2.11e-1(1.60) &  3.47e-2(1.00) & 7.56e-3(1.10)  &
6.96e-1(1.00) & 2.04e-11 & 4.40\\
			 1/64 & 5.28e-1(1.00) & 7.27e-2(1.54) & 1.74e-2(1.00)   & 3.71e-3(1.03) &
3.48e-1(1.00) & 2.04e-11 & 19.90\\
			 1/128 & 2.64e-1(1.00) & 2.54e-2(1.52)  & 8.68e-3(1.00) & 1.84e-3(1.01) &
1.74e-1(1.00) & 2.13e-11 & 86.31\\
			\hline
	\end{tabular} }
}
\end{table}

\begin{table}
	\centering
	\caption{Numerical results in 2D for Method III}
\resizebox{\textwidth}{!}{
	\scalebox{1}{
		\begin{tabular}{c c c c c c c c c}
			\hline
			 h & $\|\nabla \bm{u}-\nabla \bm{u}_h\|_0$  & $\|p-p_h\|_0$  & $\|\bm{J}-\bm{J}_h\|_{div}$  & $\|\phi-\phi_h\|_0$ & $\|\nabla\theta-\nabla\theta_h\|_0$ & $\|\nabla\cdot\bm{J}\|_0$ & CPU(s) \\
			\hline
			 1/16 & 2.12(-  -) & 6.39e-1(-  -) & 6.92e-2(-  -)  & 1.62e-2(-  -)  &
1.39(-  -)& 2.04e-11 & 0.86\\
			 1/32 & 1.06(1.00) & 2.11e-1(1.60) &  3.47e-2(1.00) & 7.56e-3(1.10)  &
6.96e-1(1.00) & 2.04e-11 & 4.24\\
			 1/64 & 5.28e-1(1.00) & 7.27e-2(1.54) & 1.74e-2(1.00)   & 3.71e-3(1.03) &
3.48e-1(1.00) & 2.04e-11 & 19.87\\
			 1/128 & 2.64e-1(1.00) & 2.54e-2(1.52)  & 8.68e-3(1.00) & 1.84e-3(1.01) &
1.74e-1(1.00) & 2.14e-11 & 85.52\\
			\hline
	\end{tabular} }
}
\end{table}

\begin{table}
	\centering
	\caption{Numerical results in 3D for Method I}
\resizebox{\textwidth}{!}{
	\scalebox{1}{
		\begin{tabular}{c c c c c c c c c c }
            \hline
			h & $\|\nabla \bm{u}-\nabla \bm{u}_h\|_0$  & $\|p-p_h\|_0$  & $\|\bm{J}-\bm{J}_h\|_{div}$  & $\|\phi-\phi_h\|_0$  & $\|\nabla\theta-\nabla\theta_h\|_0$ & $\|\nabla\cdot\bm{J}\|_0$ & CPU(s) \\
			\hline
			 1/8 & 1.20e-1(-  -)  & 8.29e-2(-  -) & 2.19e-2(-  -)  & 4.90e-3(-  -)  &
1.07e-1(- -) & 1.37e-12 & 9.88\\
			 1/12 & 8.16e-2(0.94) & 4.30e-2(1.62) & 1.46e-2(1.00)  & 3.27e-3(1.00)  & 7.27e-2(0.95) & 1.38e-12 & 38.7\\
			 1/16 & 6.17e-2(0.97) & 2.73e-2(1.58) & 1.10e-2(1.00)  & 2.45e-3(1.00)  &  5.50e-2(0.97) & 1.39e-12 & 105.54\\
			 1/20 & 4.95e-2(0.98) & 1.92e-2(1.58) & 8.77e-3(1.00)  & 1.96e-3(1.00)  & 4.41e-2(0.98) & 1.39e-12 & 244.97\\
			\hline
	\end{tabular} }
}
\end{table}

\begin{table}
	\centering
	\caption{Numerical results in 3D for Method II}
\resizebox{\textwidth}{!}{
	\scalebox{1}{
		\begin{tabular}{c c c c c c c c c c }
			\hline
			 h & $\|\nabla \bm{u}-\nabla \bm{u}_h\|_0$  & $\|p-p_h\|_0$  & $\|\bm{J}-\bm{J}_h\|_{div}$  & $\|\phi-\phi_h\|_0$  & $\|\nabla\theta-\nabla\theta_h\|_0$ & $\|\nabla\cdot\bm{J}\|_0$ & CPU(s) \\
			\hline
			 1/8 & 1.20e-1(-  -)  & 8.29e-2(-  -)  & 2.19e-2(-  -)  & 4.90e-3(-  -) &
 1.07e-1(- -) & 1.37e-12 & 14.124\\
			 1/12 & 8.16e-2(0.94) & 4.30e-2(1.62)  &  1.46e-2(1.00) & 3.27e-3(1.00) & 7.27e-2(0.95) & 1.38e-12 & 56.59\\
			 1/16 & 6.17e-2(0.97) & 2.73e-2(1.58)  & 1.10e-2(1.00)  & 2.45e-3(1.00) & 5.50e-2(0.97) & 1.39e-12 & 148.86\\
			 1/20 & 4.95e-2(0.98) & 1.92e-2(1.58)  & 8.88e-3(0.94) & 1.96e-3(1.00)  & 4.41e-2(0.98) & 1.39e-12 & 281.99\\
			\hline
	\end{tabular} }
}
\end{table}

\begin{table}
	\centering
	\caption{Numerical results in 3D for Method III}
\resizebox{\textwidth}{!}{
	\scalebox{1}{
		\begin{tabular}{c c c c c c c c c c }
			\hline
			 h & $\|\nabla \bm{u}-\nabla \bm{u}_h\|_0$  & $\|p-p_h\|_0$  & $\|\bm{J}-\bm{J}_h\|_{div}$  & $\|\phi-\phi_h\|_0$ & $\|\nabla\theta-\nabla\theta_h\|_0$ & $\|\nabla\cdot\bm{J}\|_0$ & CPU(s) \\
			\hline
			 1/8  & 1.20e-1(-  -) & 8.29e-2(-  -) & 2.19e-2(-  -)  & 4.90e-3(-  -)  &
 1.07(- -) & 1.37e-12 & 8.74\\
			 1/12 & 8.16e-2(0.94) & 4.30e-2(1.62)  &  1.46e-2(1.00) & 3.27e-3(1.00) & 7.27e-2(0.95) & 1.38e-12 & 32.45\\
			 1/16 & 6.17e-2(0.97) & 2.73e-2(1.58)  & 1.10e-2(1.00)  & 2.45e-3(1.00) & 5.50e-2(0.97) & 1.39e-12 & 95.60\\
			 1/20 & 4.95e-2(0.98) & 1.92e-2(1.58)  & 8.77e-3(1.00)  & 1.96e-3(1.00) & 4.41e-2(0.98) & 1.39e-12 & 222.70\\
			\hline
	\end{tabular} }
}
\end{table}

\subsection{ Problems with singular solutions}
In order to verify the ability of this method to capture singularities, we consider the thermally coupled MHD problem in a non-convex L-shaped domain $\Omega =(-0.5,0.5)^2/([0,0.5)\times(-0.5,0])$.
As a result of the re-entrant corner presented in the domain, the exact solutions have strong singularities at this corner, namely the origin of coordinates. We set $\kappa=Pr=Ra=1$ and $\bm{B}=[0,0,1]^T.$ And the force term and boundary conditions are selected so that the analytical solution is as follows. Let $(r^*,\theta^*)$ be the polar coordinate and $\alpha=\frac{3\pi}{2}$.
 The minimum positive solution of this equation $\mu \sin(\alpha)+\sin(\mu\alpha)=0$ is given to the parameter $\mu$ as well as the exact solutions are of defined by
\begin{equation*}
\left\{\begin{array}{c}
\begin{aligned}
&\bm{R}(\theta^*)=\sin((1+\mu)\theta^*)\frac{\cos(\mu \alpha)}{1+\mu}\cos((1+\mu)\theta^*)-\sin((1-\mu)\theta^*)\frac{\cos(\mu\alpha)}{1-\mu}+\cos(\theta^*(1-\mu)\theta^*),\\
&u_1(r^*,\theta^*)=(r^*)^{\mu}((1+\mu)\sin(\theta^*)\bm{R}(\theta^*)+\cos(\theta^*)\bm{R^{'}}(\theta^*)),\
J(r^*,\theta^*)=\nabla((r^*)^{2/3}\sin(2/3\theta^*)),\\
&u_2(r^*,\theta^*)=(r^*)^{\mu}(-(1+\mu)\cos(\theta^*)\bm{R}(\theta^*)+\sin(\theta^*)\bm{R}^{'}(\theta^*)),
\quad \phi(r^*,\theta^*)\equiv 0,\\
&p(r^*,\theta^*)=-(r^*)^{\mu-1}((1+\mu)^2\bm{R}^{'}(\theta^*))+\bm{R}^{'''}(\theta^*)/(1-\mu),\quad
\quad \theta(r^*,\theta^*)=u_1+u_2.
\end{aligned}
\end{array} \right.
\end{equation*}
In fact, we can get $(\bm{u},p)\in\bm{H}^{1+\mu}(\Omega)\times H^{\mu}(\Omega)$ along with $\bm{J}\in \bm{H}^{2/3}(\Omega).$ The conductive boundary is considered in the example instead of the insulating boundary condition. This means the electric potential $\phi=0$ on $\partial\Omega.$

The numerical results of Methods I-III under several different grid sizes showed in Tables 7-9.  From the numerical results in Tables 7-9, it is shown that the optimal numerical convergence orders O(h$^{2/3}$) of these methods are consistent with that predicted by the theoretical analysis in the foregoing. Besides, different from the first example, Method II takes the least CPU time for the reason maybe that the fast convergence speed saves CPU time greatly.   Ultimately, the streamline of numerical solution velocity and current density and the contour of their components are shown respectively in Figures 2-3. Therefore, the methods proposed can effectively deal with the non convex region problems.

\begin{table}
	\centering
	\caption{Numerical results with L-shaped domain in 2D for Method I}
\resizebox{\textwidth}{!}{
	\scalebox{1}{
		\begin{tabular}{c c c c c c c c c c c c }
			\hline
			 h & $\|\nabla \bm{u}-\nabla \bm{u}_h\|_0$  & $\|p-p_h\|_0$  & $\|\bm{J}-\bm{J}_h\|_{div}$  & $\|\phi-\phi_h\|_0$   & $\|\nabla\theta-\nabla\theta_h\|_0$ & $\|\nabla\cdot\bm{J}\|_0$ & CPU(s) \\
			\hline
			 1/16 & 5.09e-1(-  -) & 5.99e-1(-  -) & 4.82e-2(-  -)  & 5.17e-4(-  -)  &
 5.53e-1(- -) & 5.27e-14 & 15.76 \\
			 1/32 & 3.49e-1(0.54) & 3.99e-1(0.59)  &  3.07e-2(0.65) & 2.09e-4(1.30) & 3.81e-1(0.54) & 7.26e-14 & 73.15 \\
			 1/64 & 2.39e-1(0.54) & 2.71e-1(0.56) & 1.95e-2(0.66)   & 8.63e-5(1.28)  & 2.62e-1(0.54) & 1.55e-13 & 267.18 \\
			 1/128 & 1.64e-1(0.54) & 1.85e-1(0.55)  & 1.23e-2(0.66) & 3.67e-5(1.24) & 1.80e-1(0.54) & 3.19e-13 & 1117.29 \\
			\hline
	\end{tabular} }
}
\end{table}

\begin{table}
	\centering
	\caption{Numerical results with L-shaped domain in 2D for Method II}
\resizebox{\textwidth}{!}{
	\scalebox{1}{
		\begin{tabular}{c c c c c c c c c c }
			\hline
			 h & $\|\nabla \bm{u}-\nabla \bm{u}_h\|_0$  & $\|p-p_h\|_0$  & $\|\bm{J}-\bm{J}_h\|_{div}$  & $\|\phi-\phi_h\|_0$ & $\|\nabla\theta-\nabla\theta_h\|_0$ & $\|\nabla\cdot\bm{J}\|_0$ & CPU(s) \\
			\hline
			 1/16 & 5.09e-1(-  -) & 5.99e-1(-  -) & 4.82e-2(-  -)  & 5.17e-4(-  -)  &
 5.53e-1(- -) & 5.10e-14 & 7.89\\
			 1/32 & 3.49e-1(0.54) & 3.99e-1(0.59)  &  3.07e-2(0.65) & 2.09e-4(1.30) & 3.81e-1(0.54) & 7.00e-14 & 32.01\\
			 1/64 & 2.39e-1(0.54) & 2.71e-1(0.56) & 1.95e-2(0.66)   & 8.63e-5(1.28)  & 2.62e-1(0.54) & 1.59e-13 & 137.45\\
			 1/128 & 1.64e-1(0.54) & 1.85e-1(0.55)  & 1.23e-2(0.66) & 3.67e-5(1.24) & 1.80e-1(0.54) & 3.18e-13 & 566.89\\
			\hline
	\end{tabular} }
}
\end{table}

\begin{table}
	\centering
	\caption{Numerical results with L-shaped domain in 2D for Method III}
\resizebox{\textwidth}{!}{
	\scalebox{1}{
		\begin{tabular}{c c c c c c c c c c }
			\hline
			 h & $\|\nabla \bm{u}-\nabla\bm{u}_h\|_0$  & $\|p-p_h\|_0$  & $\|\bm{J}-\bm{J}_h\|_{div}$  & $\|\phi-\phi_h\|_0 $  & $\|\nabla\theta-\nabla\theta_h\|_0$ & $\|\nabla\cdot\bm{J}\|_0$ & CPU(s) \\
			\hline
			 1/16 & 5.09e-1(-  -) & 5.99e-1(-  -) & 4.82e-2(-  -)  & 5.17e-4(-  -)  &
 5.53(- -) & 5.17e-14 & 7.94\\
			 1/32 & 3.49e-1(0.54) & 3.99e-1(0.59)  &  3.07e-2(0.65) & 2.09e-4(1.30) & 3.81(0.54) & 6.85e-14 & 33.43\\
			 1/64 & 2.39e-1(0.54) & 2.71e-1(0.56) & 1.95e-2(0.66)   & 8.63e-5(1.28)  & 2.62e-1(0.54) & 1.56e-13 & 149.73\\
			 1/128 & 1.64e-1(0.54) & 1.85e-1(0.55)  & 1.23e-2(0.66) & 3.67e-5(1.24) & 1.80e-1(0.54) & 3.22e-13 & 635.81\\
			\hline
	\end{tabular} }
}
\end{table}

\subsection{Thermal driven cavity problem}
Next, a classic benchmark test, namely the thermal driven cavity problem is considered. We consider  a square cavity with differential thermal vertical walls in $\Omega=[0,1]^2$, in which the left and right walls are separately kept at $\theta_l$ and $\theta_r$ with $\theta_l>\theta_r$. Setting $\theta_l=1$ and $\theta_r=0$ and the rest of the walls are insulated. In order to greatly verify the effectiveness of the iterative methods, the fluid in the cavity is treated as air in our model and we take Pr=0.71, $\kappa=1$, Ra varies within the range $[10^3$-$10^5]$, $\bm{f} = \bm{g}=\bm{0}$ and $\psi=0$. We use the Newton iterative method for momentum equation and temperature equation to improve appropriately the accuracy while consider no-slip boundary  for the velocity, namely $\bm{u}= \bm{0}$ on $\partial\Omega$.

We present the vertical/horizontal velocity at mid-height for various Ra in Figure 4. Then, we further study the change of the forms of various variables with $Ra=10^3,10^4,10^5$. Firstly, Figure 5 shows the streamlines of velocity, we can easily observe that the streamline of velocity changes from one large vortex to two large vortices, and then and moved to both sides. Secondly, the change in the streamlines pattern of the current density from curves to straight lines can be observed in Figure 6. Thirdly, Figure 7 shows that as the change of $Ra$ the proportion of large value of potential in the whole region is getting higher and higher.  Finally, we can see in Figure 8 that the temperature distribution becomes more disordered.

\subsection{ B$\acute{e}$nard convection problem}
Finally, we consider a B$\acute{e}$nard convection problem in domain $\Omega=[0,5]\times[0,1]$ to consider its effectiveness in more detail. Setting $\bm{u}=\bm{0}$ on $\partial\Omega$, the bottom and the top walls are enforced by $\theta$$=1$(or $\sin(x/5)$) and $\theta$$=0$, the right $(x$=$5)$ and left $(x$$=0)$ walls are adiabatic,  respectively. Here we only consider the Oseen iterative method with $Pr$$=1,\kappa$$=1$  while the source $\bm{f}$$=\bm{g}=$$\bm{0}$ and $\varphi$$=0$.

 Figures 9-12 report the velocity streamlines, current density streamlines, potential isolines along with isotherms for various kinds $Ra = 3\times10^3, 10^5$ with homogeneous heating $\theta= 1$ on the bottom wall. It can be clearly observed that the vortexes become more inclined and the value becomes larger with the increase of $Ra$  in Figure 9. And then the value of current density increases slowly, and the distribution radian of streamline on the upper and lower sides gradually increases, showed in Figure 10. Ultimately, as can be seen from Figures 11-12, with the increase of $Ra$, the value of the potential becomes larger and  performance of the temperature gets also more and more complicated.

The numerical results of non-uniform heating $\theta$$=\sin(x/5)$ on the bottom wall with $Ra=10^3,5\times 10^4$ are shown in the Figures 13-16.  In Figure 13, we can clearly observe that  evolved into a large right vortex. Furthermore, the changes in current density and the temperature field  are consistent with those under uniform heating conditions, we can see from Figures 14 and 16. Last, Figure 15 shows that the isoline distribution of the potential changes greatly, and the value also decreases. From the above discussion, we know that the used method  can simulate the B$\grave{e}$nard convection problem for large Rayleigh number very well.

\begin{figure}[!htbp]
\begin{center}
\subfigure[]{\includegraphics[width=0.32\textwidth]{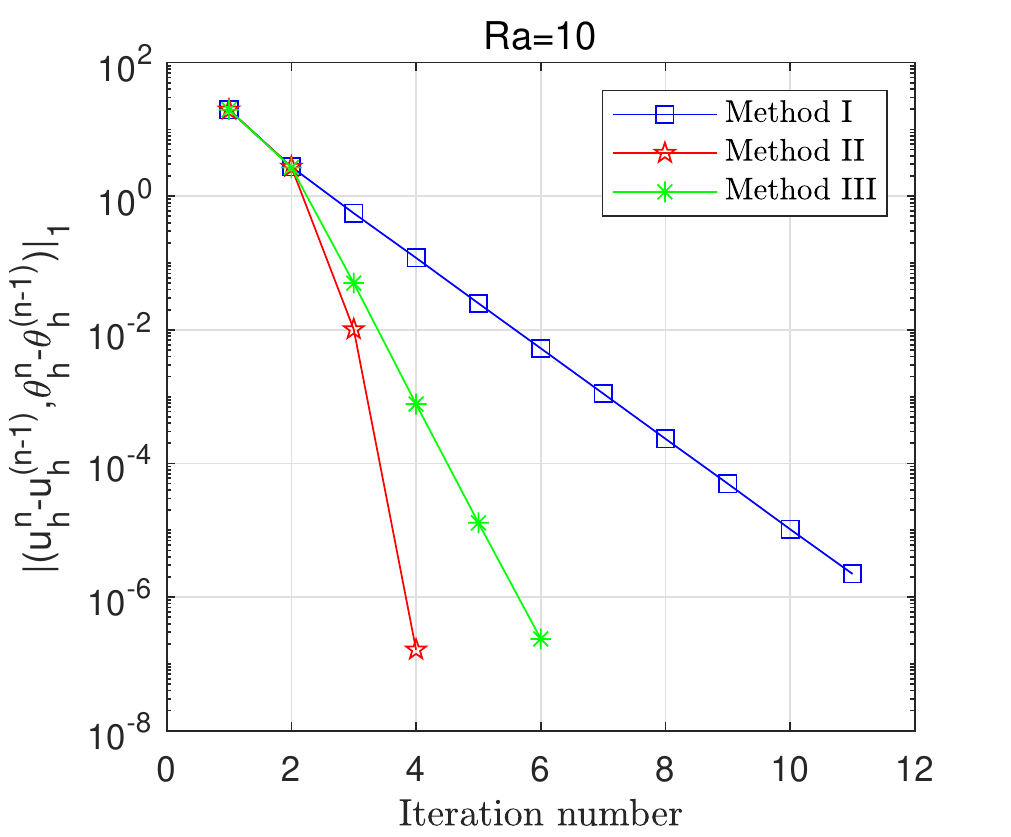}}
\subfigure[]{\includegraphics[width=0.32\textwidth]{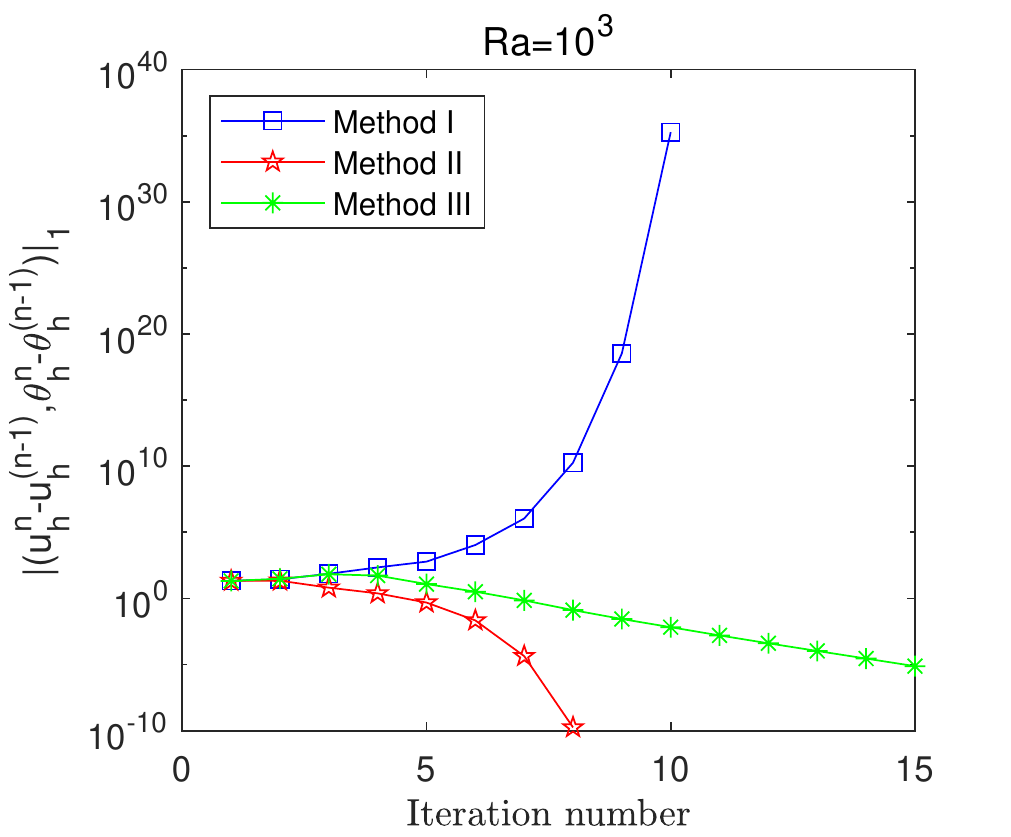}}
\subfigure[]{\includegraphics[width=0.32\textwidth]{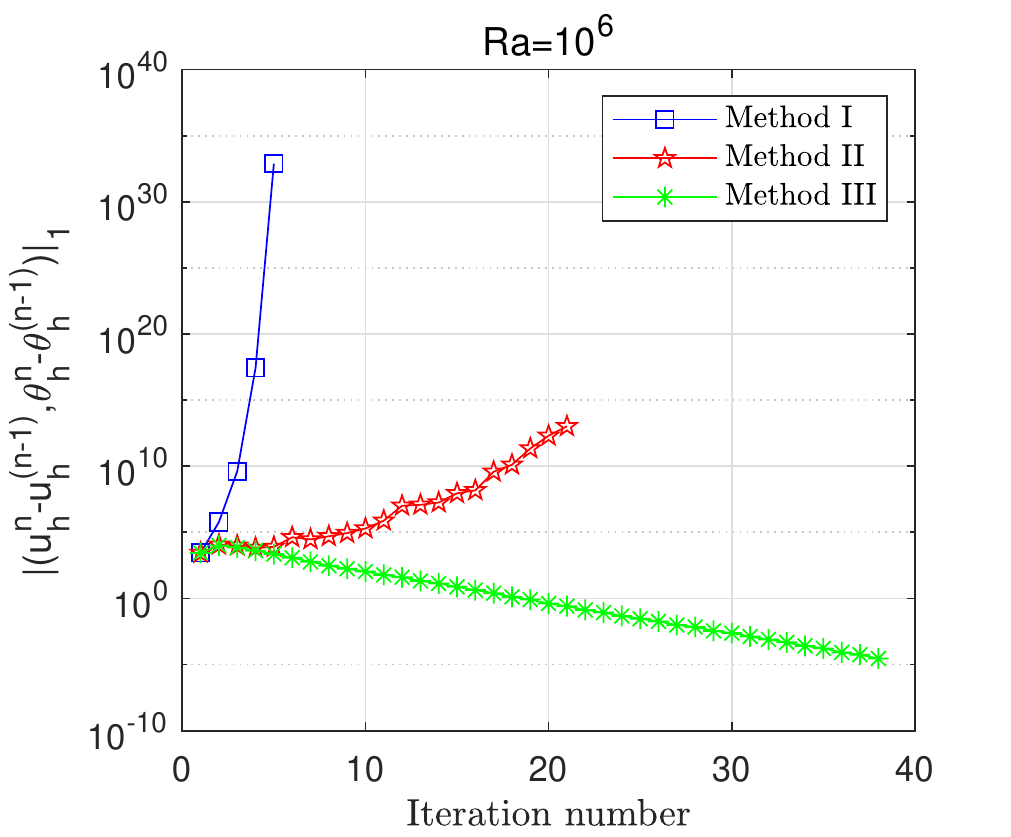}}
\caption{The iteration convergence errors with Ra$= 10,10^3$ and $10^6$.}
\end{center}
\end{figure}

\begin{figure}[!htbp]
\begin{center}
\subfigure[]{\includegraphics[width=0.32\textwidth]{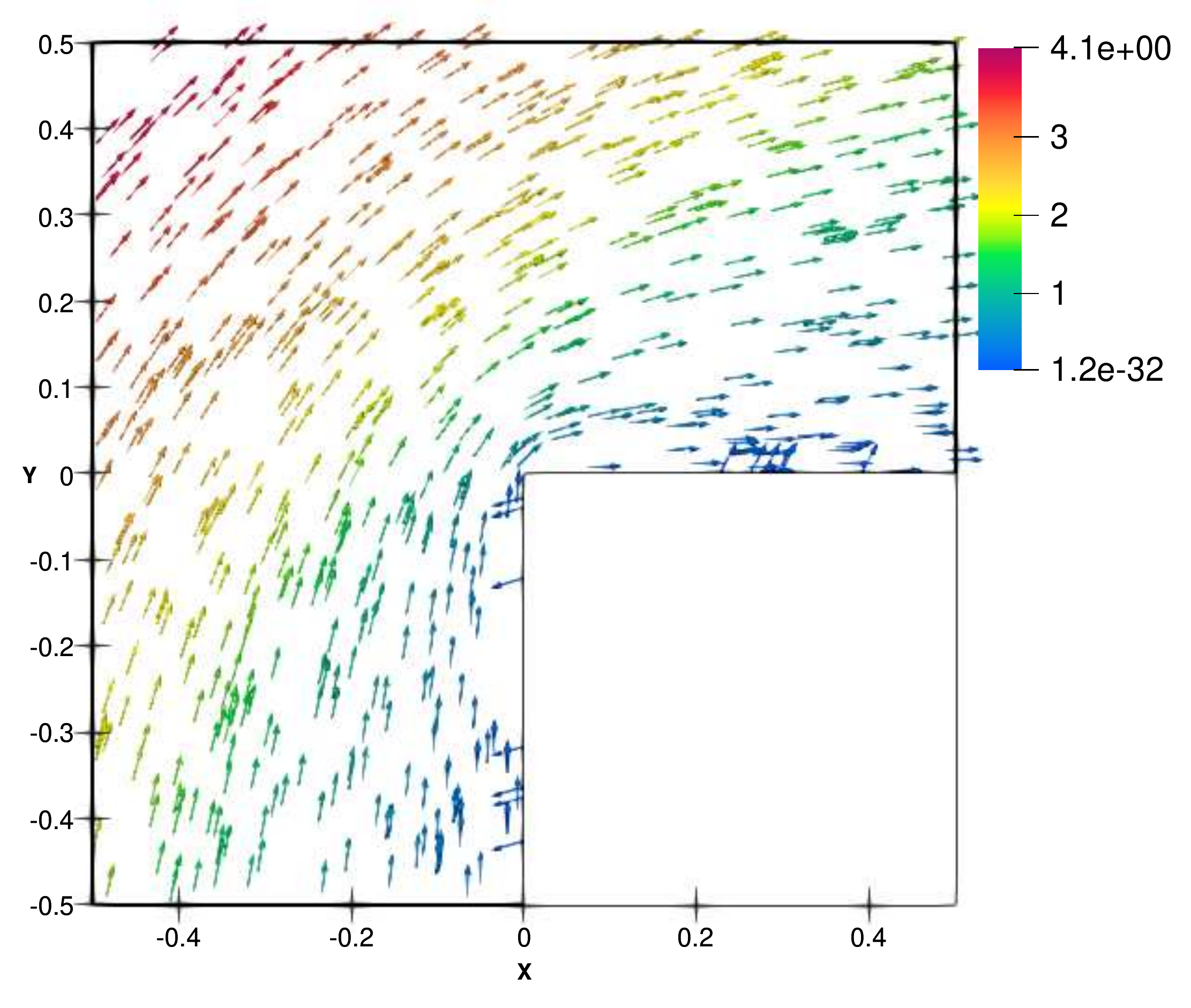}}
\subfigure[]{\includegraphics[width=0.32\textwidth]{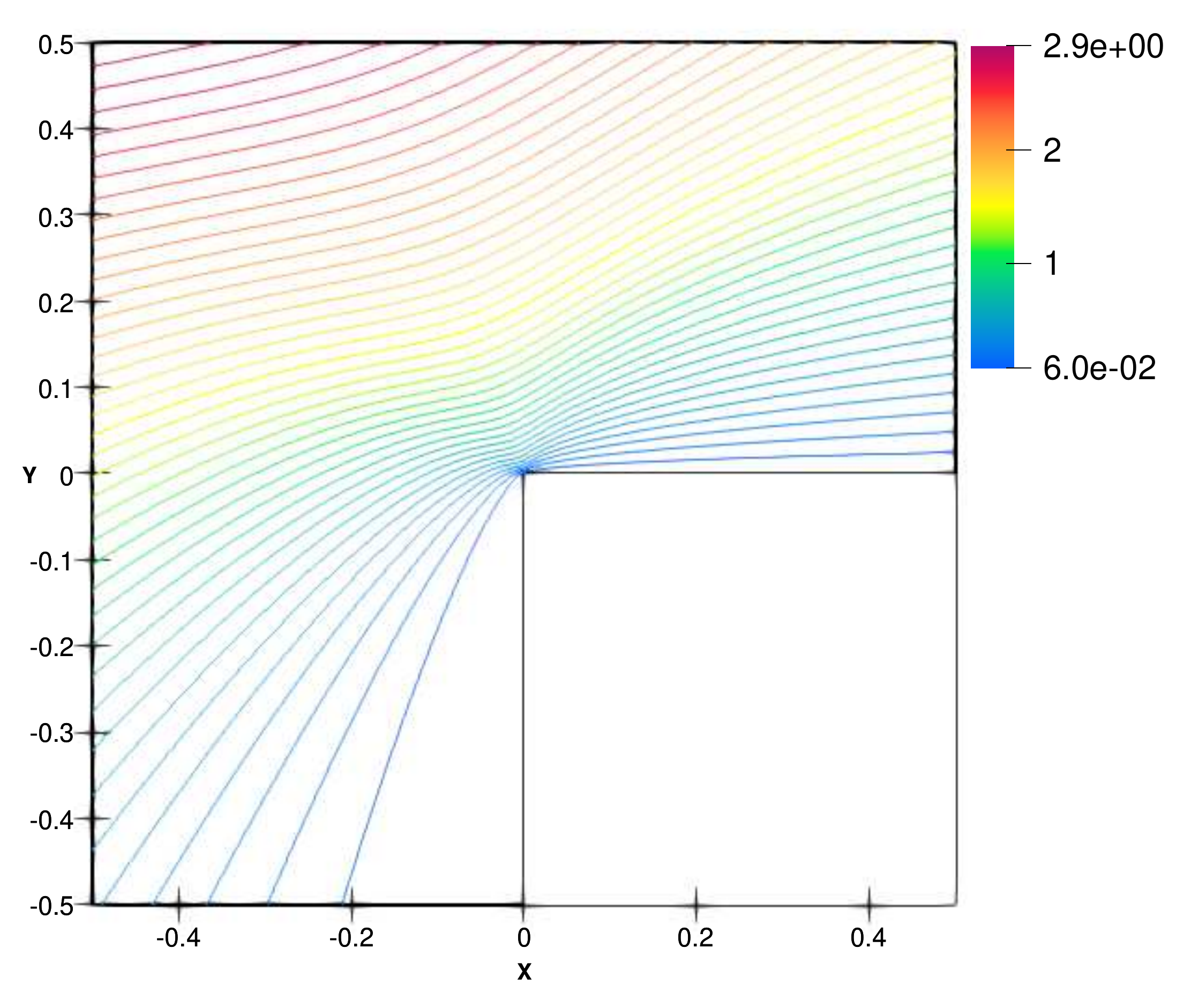}}
\subfigure[]{\includegraphics[width=0.32\textwidth]{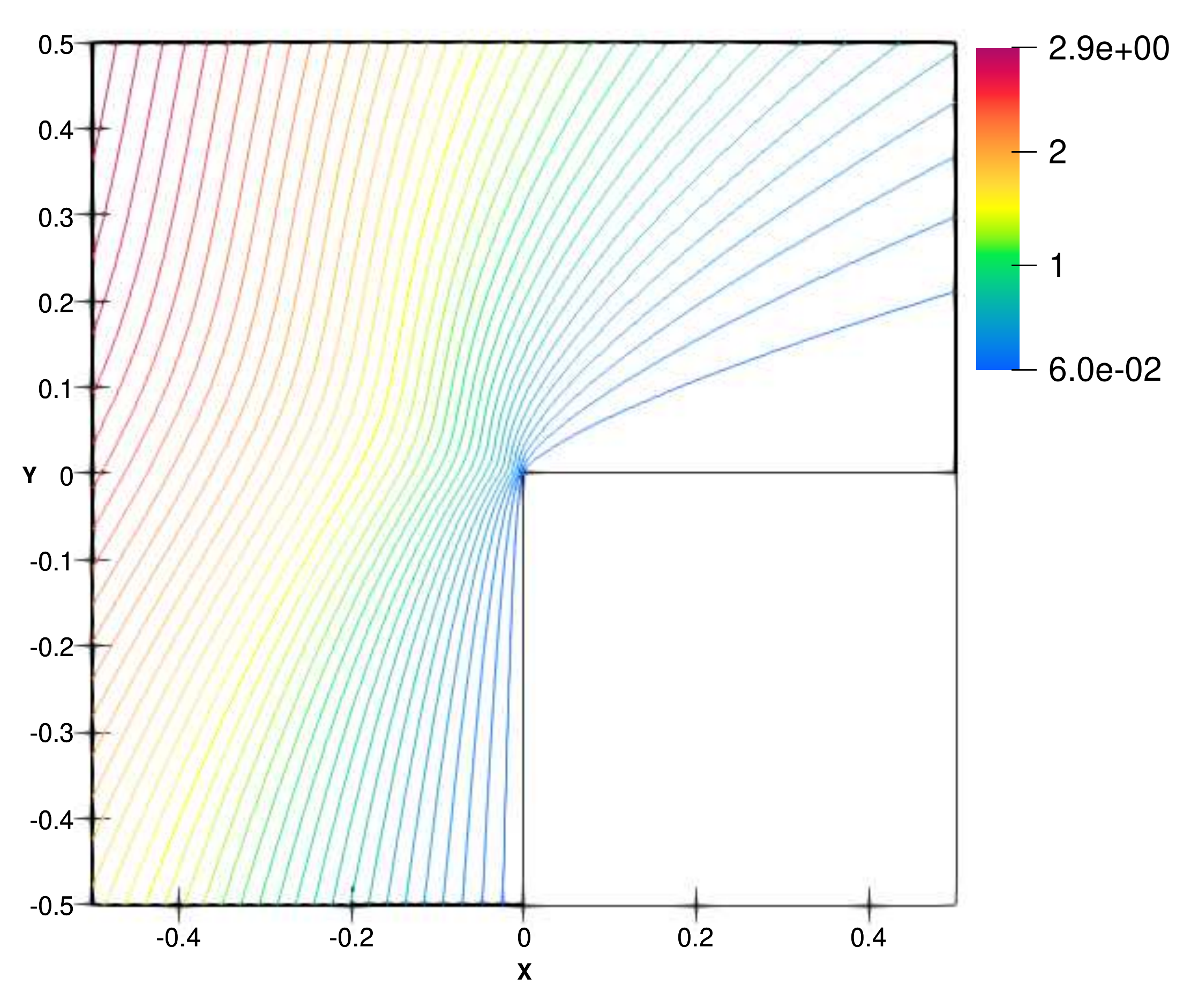}}
\caption{(a) Numerical approximations of $\bm{u}$, (b) contours of $u_1$, (c) contours of $u_2$.}
\end{center}
\end{figure}

\begin{figure}[!htbp]
\begin{center}
\subfigure[]{\includegraphics[width=0.32\textwidth]{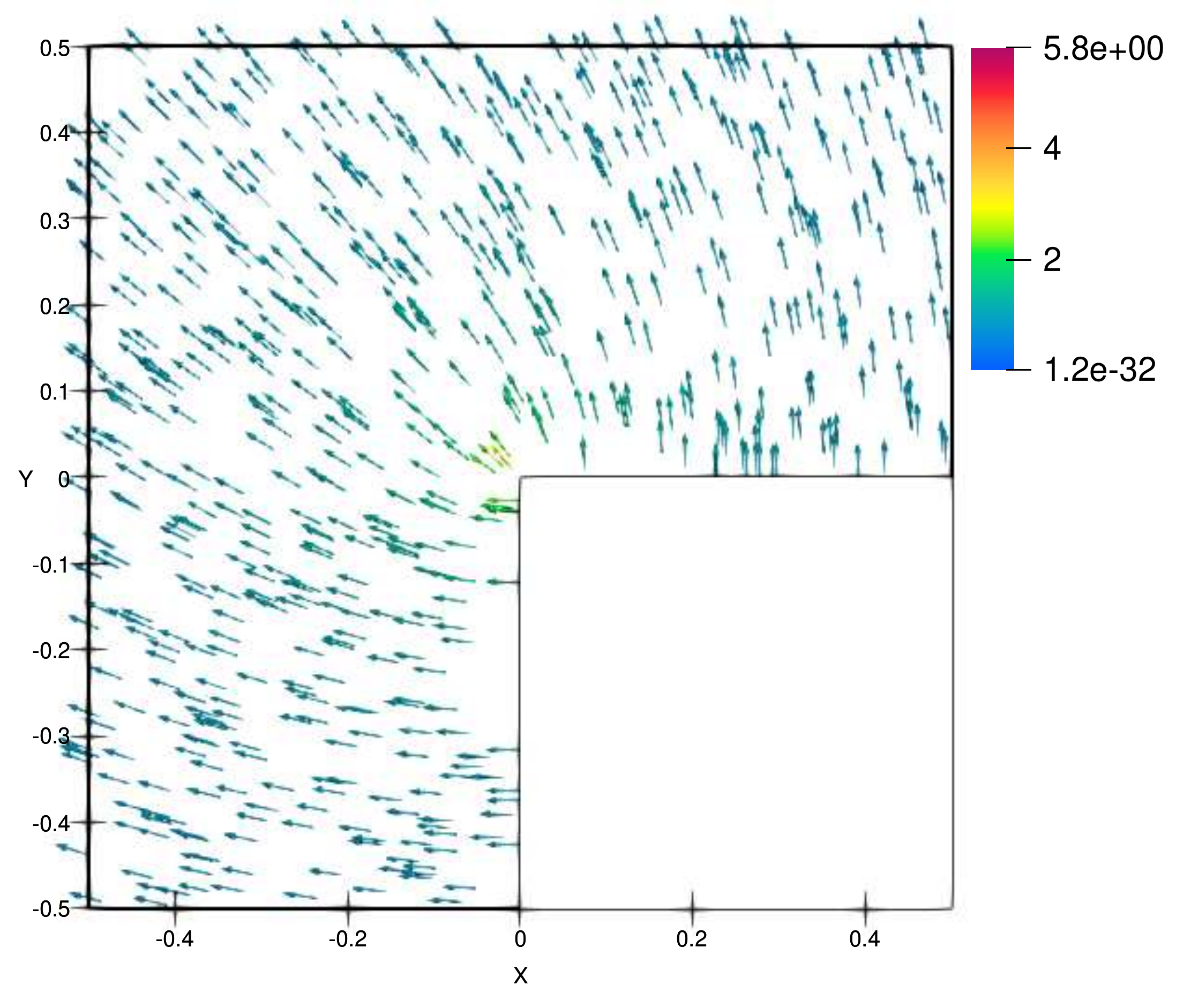}}
\subfigure[]{\includegraphics[width=0.32\textwidth]{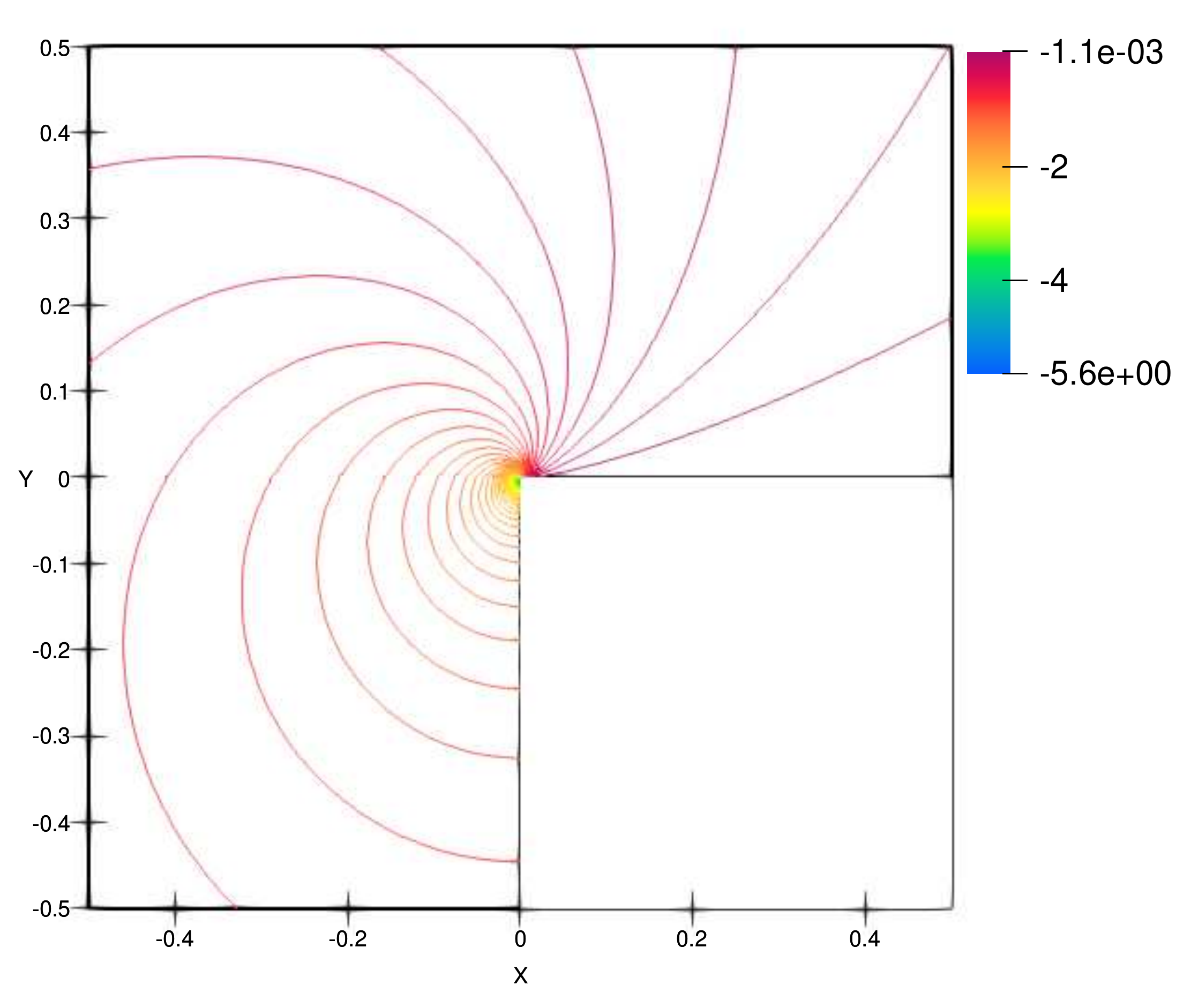}}
\subfigure[]{\includegraphics[width=0.32\textwidth]{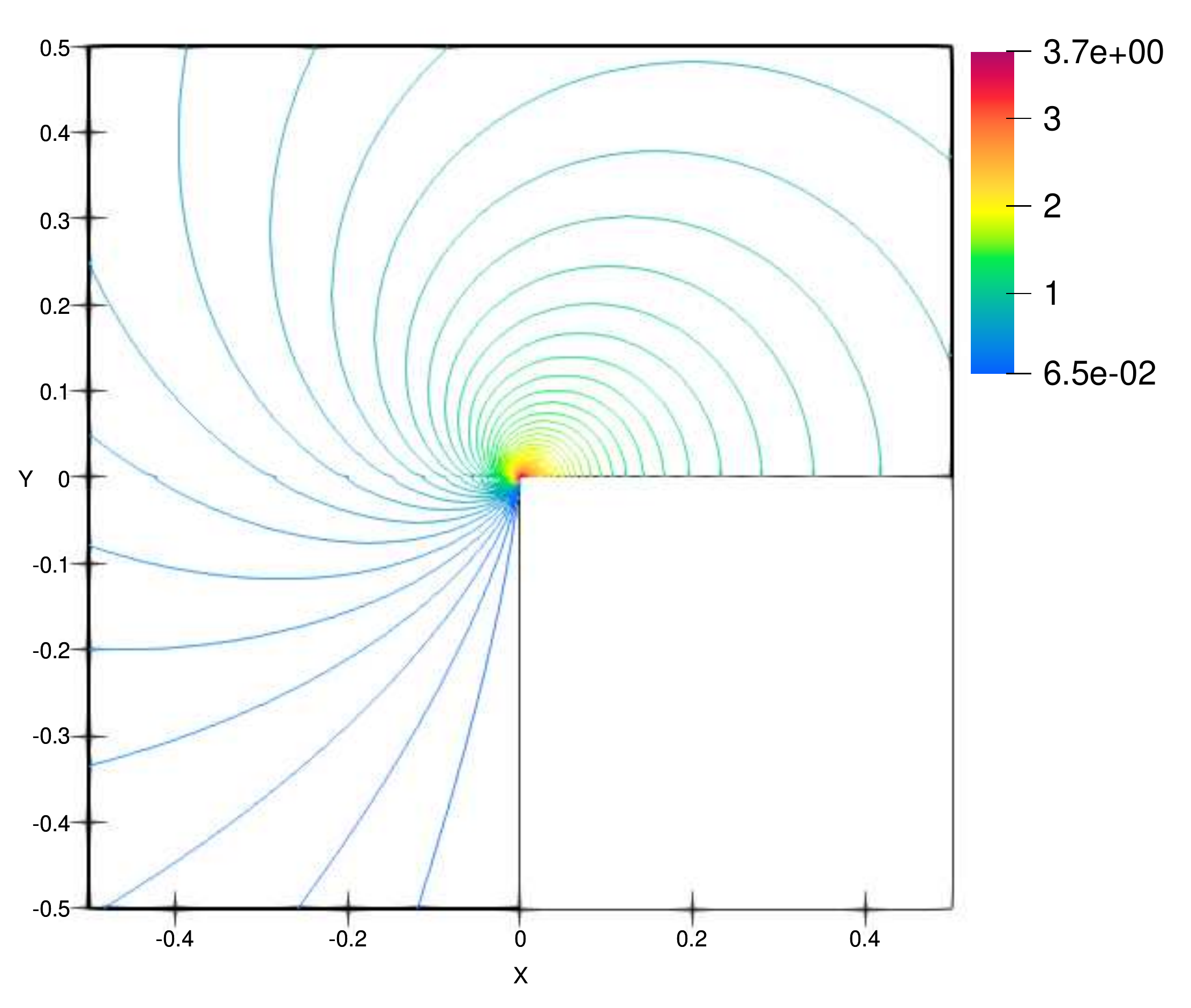}}
\caption{(a) Numerical approximations of $\bm{J}$, (b) contours of $J_1$, (c) contours of $J_2$.}
\end{center}
\end{figure}

\begin{figure}[!htbp]
\begin{center}
\subfigure[]{\includegraphics[width=0.45\textwidth]{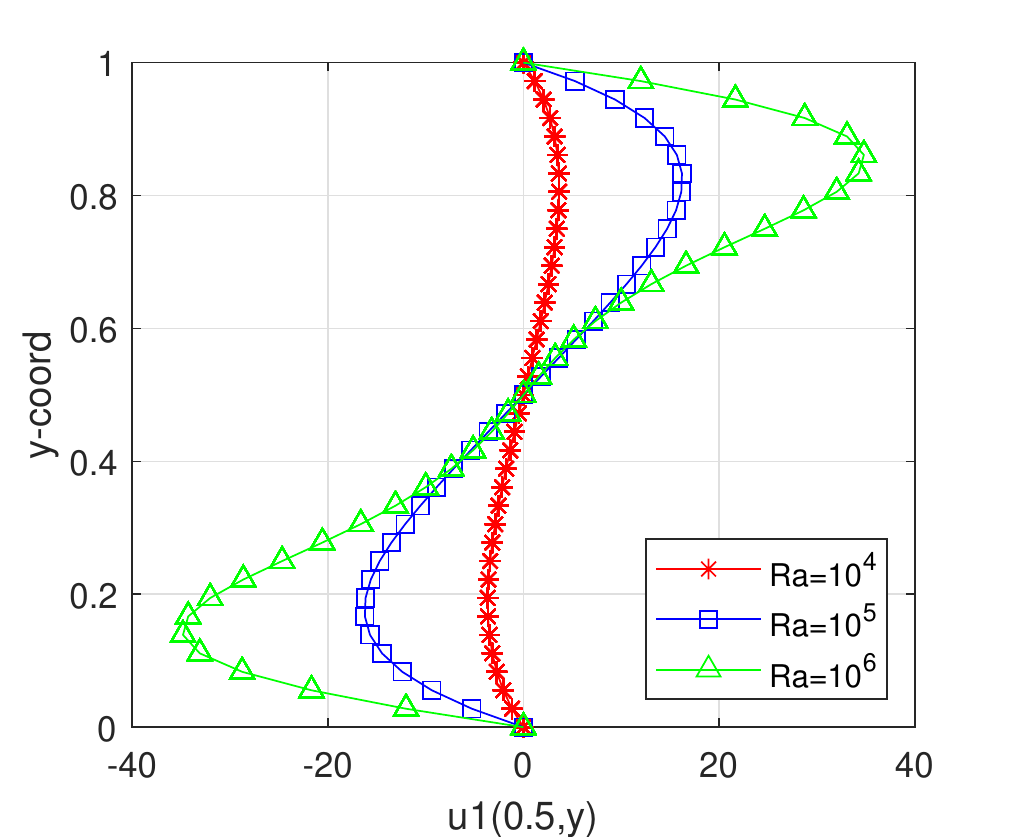}}
\subfigure[]{\includegraphics[width=0.45\textwidth]{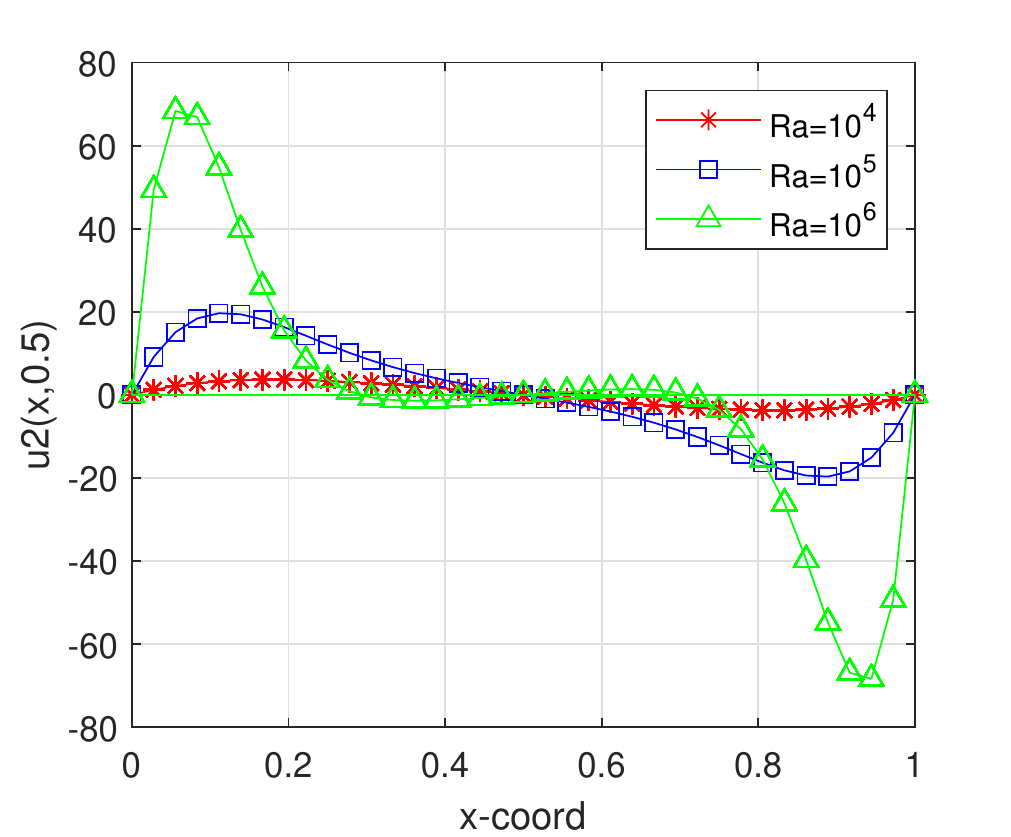}}
\caption{(a) Vertical velocity at mid-height $(x$$=$$0.5)$, (b) horizontal velocity at mid-width $(y$$=$$0.5)$.}
\end{center}
\end{figure}

\begin{figure}[!htbp]
\begin{center}
\subfigure[]{\includegraphics[width=0.32\textwidth]{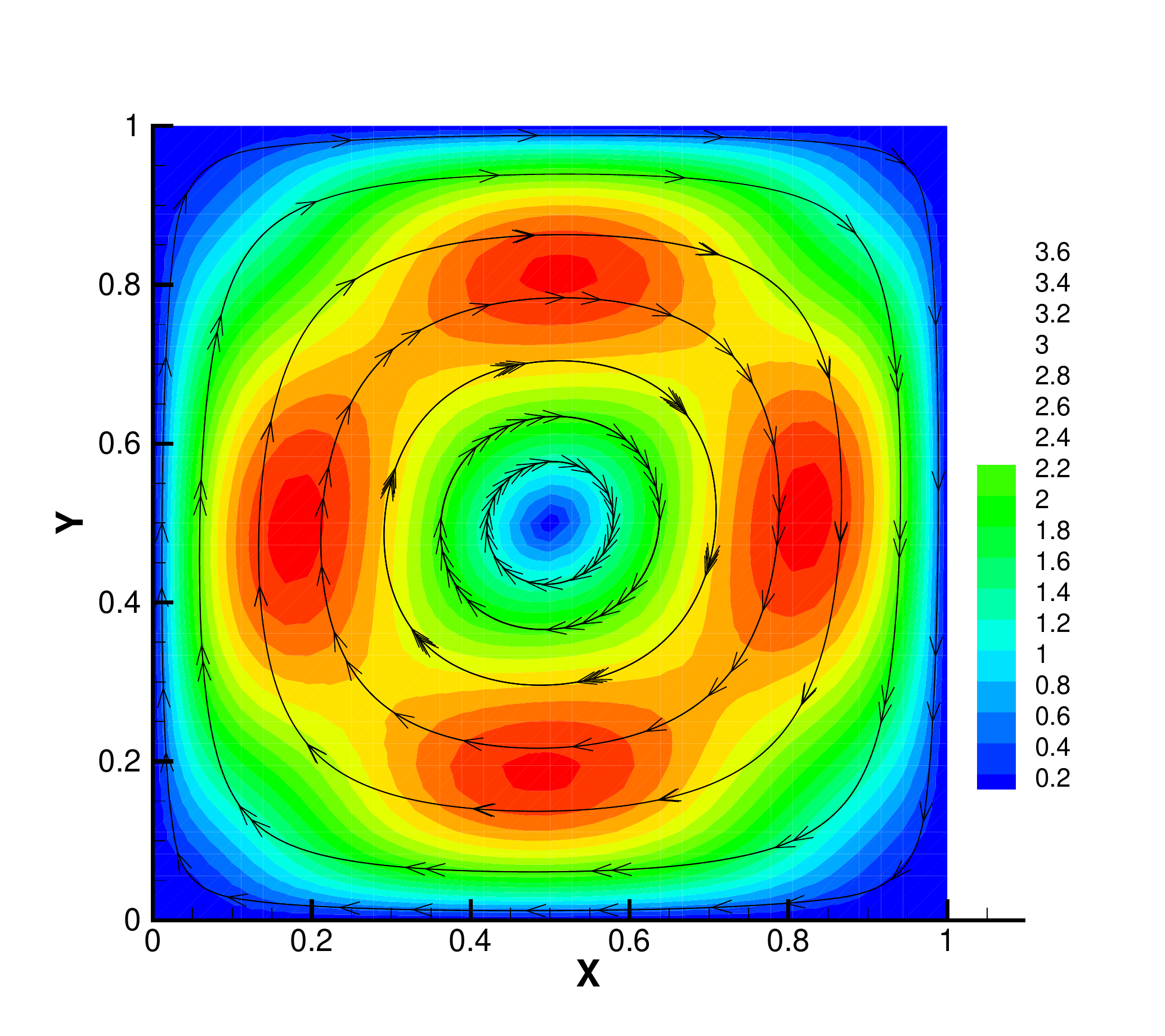}}
\subfigure[]{\includegraphics[width=0.32\textwidth]{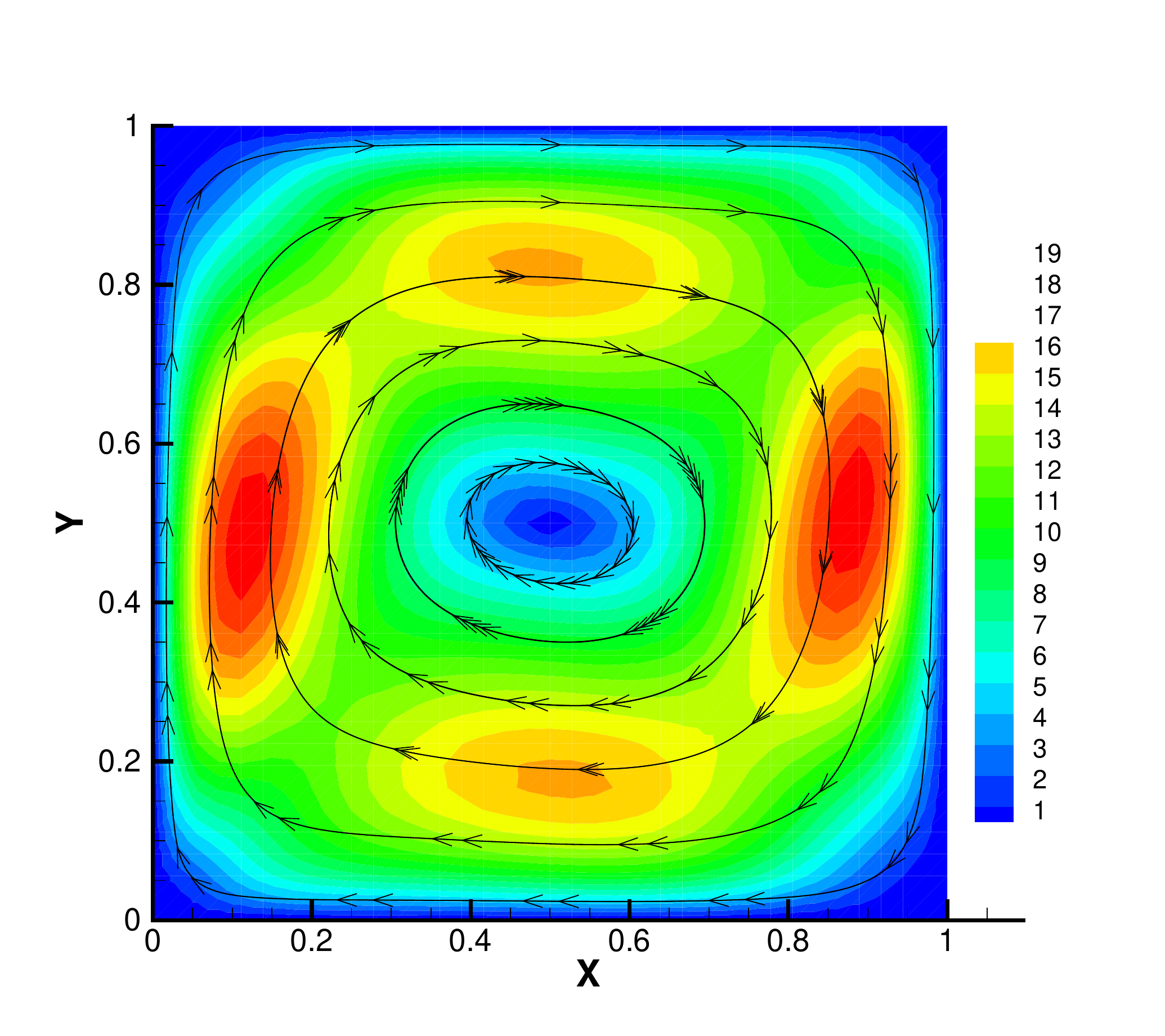}}
\subfigure[]{\includegraphics[width=0.32\textwidth]{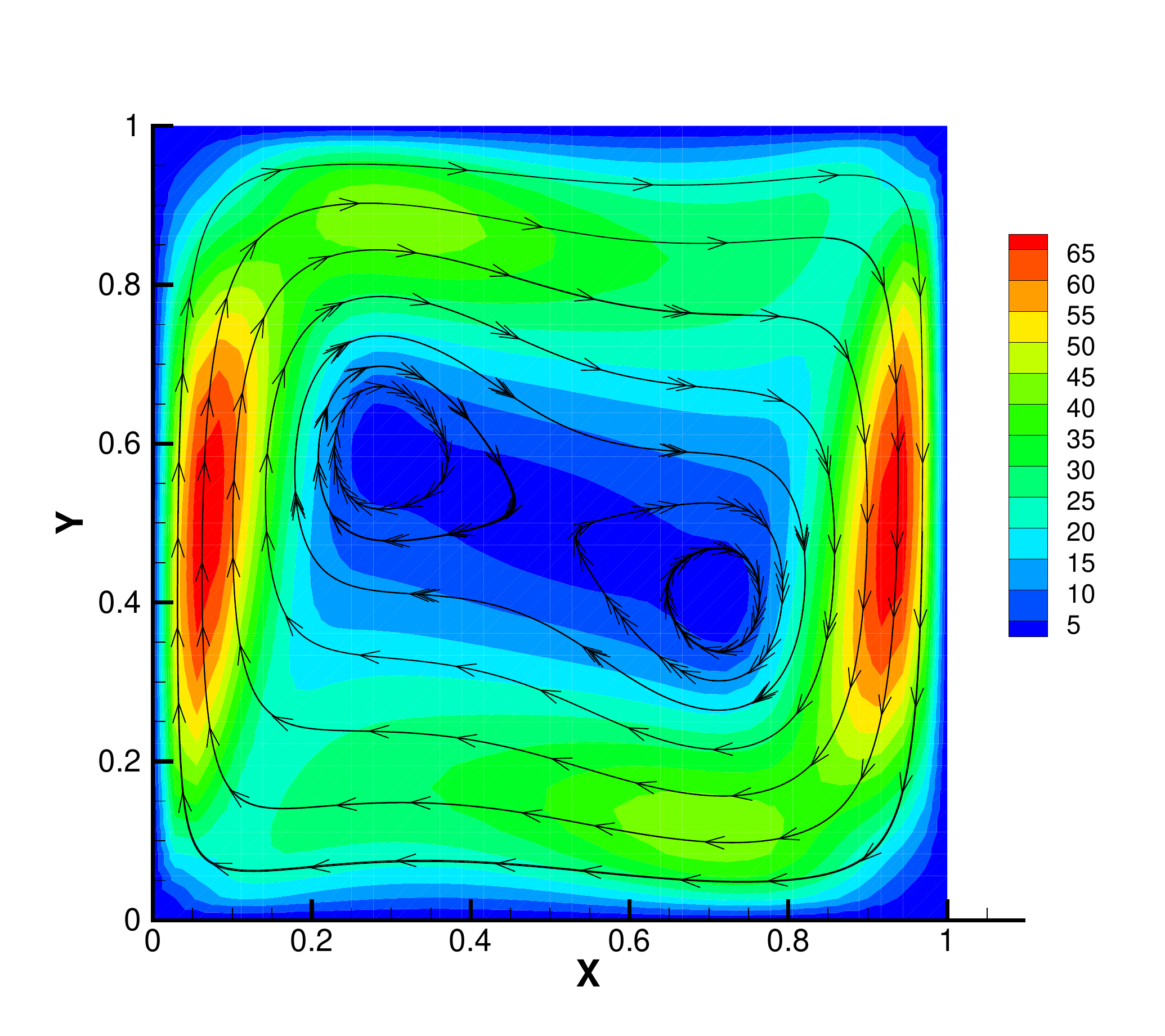}}
\caption{Velocity streamlines: (a) Ra$=10^3$, (b) Ra$=10^4$, (c) Ra$=10^5$.}
\end{center}
\end{figure}

\begin{figure}[!htbp]
\begin{center}
\subfigure[]{\includegraphics[width=0.32\textwidth]{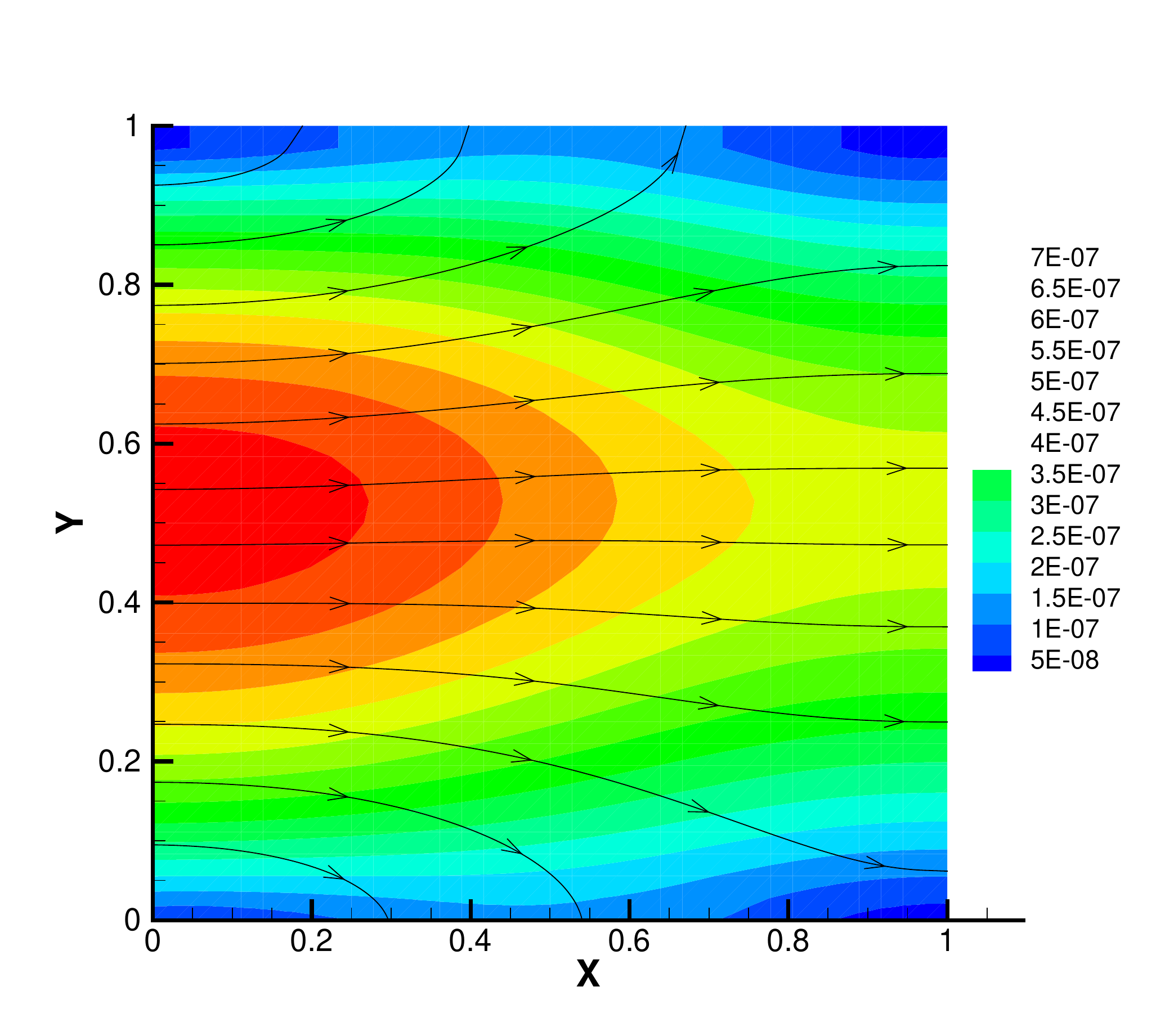}}
\subfigure[]{\includegraphics[width=0.32\textwidth]{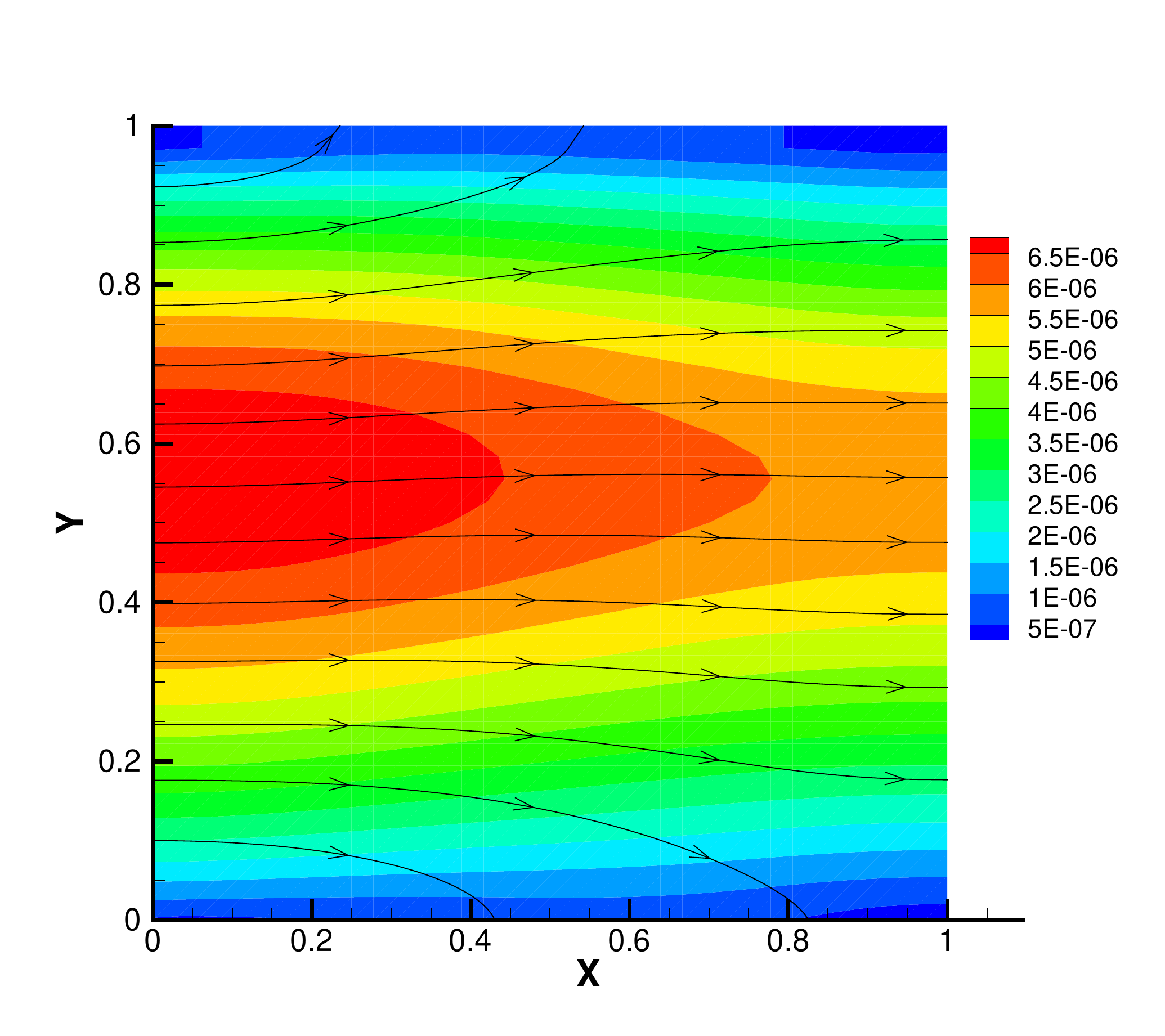}}
\subfigure[]{\includegraphics[width=0.32\textwidth]{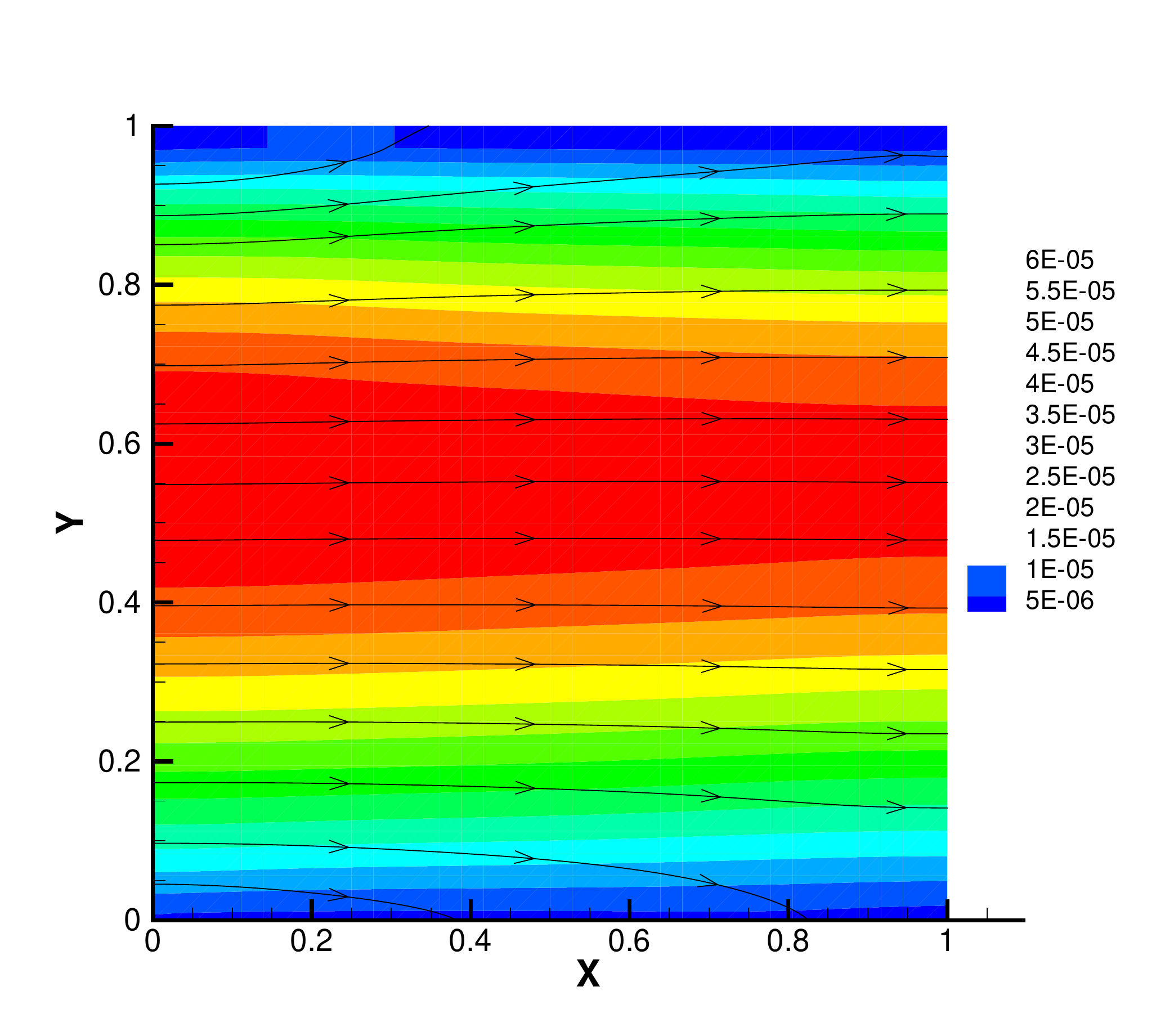}}
\caption{Current density streamline: (a) Ra$=10^3$, (b) Ra$=10^4$, (c) Ra$=10^5$.}
\end{center}
\end{figure}


\begin{figure}[!htbp]
\begin{center}
\subfigure[]{\includegraphics[width=0.32\textwidth]{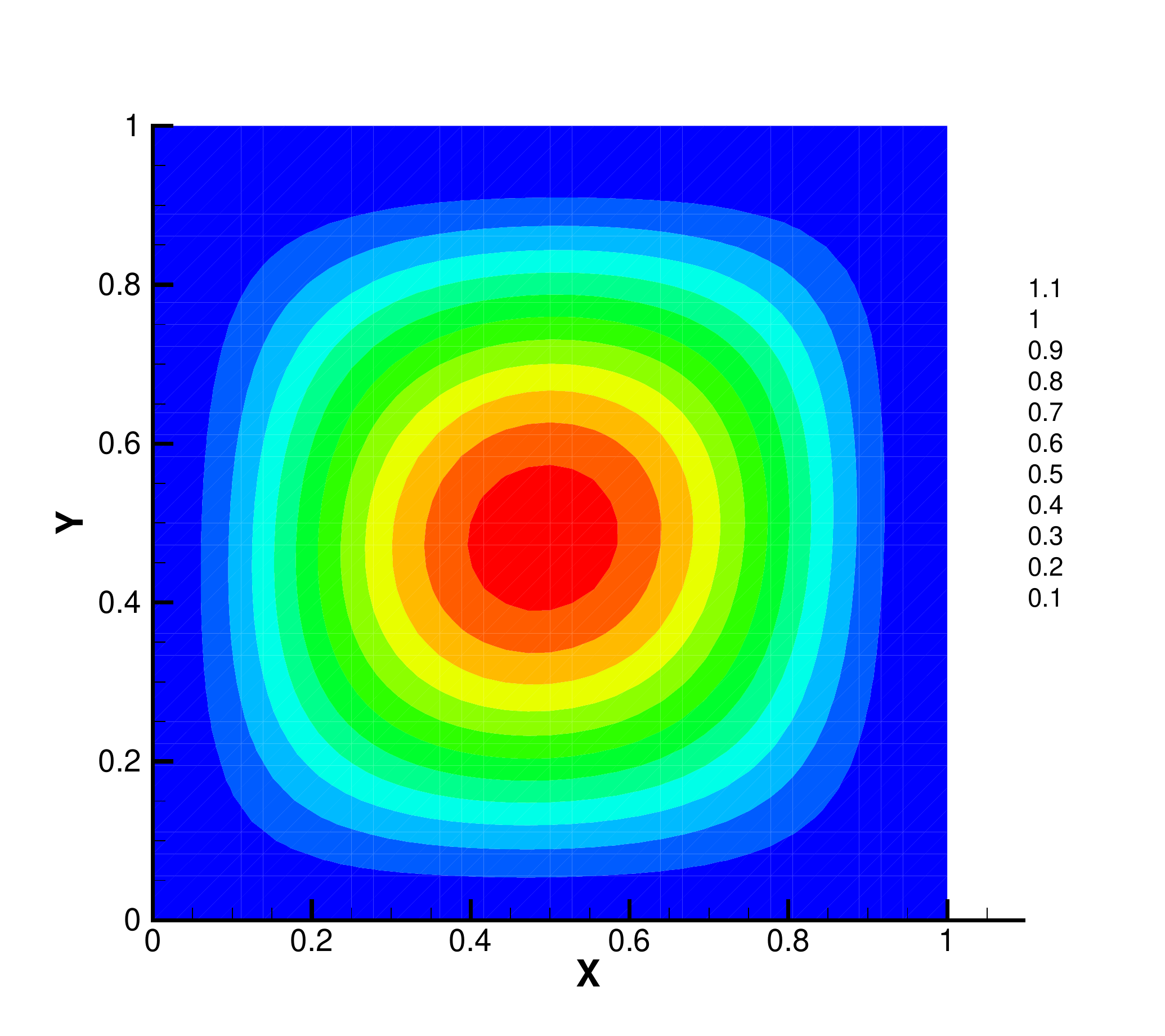}}
\subfigure[]{\includegraphics[width=0.32\textwidth]{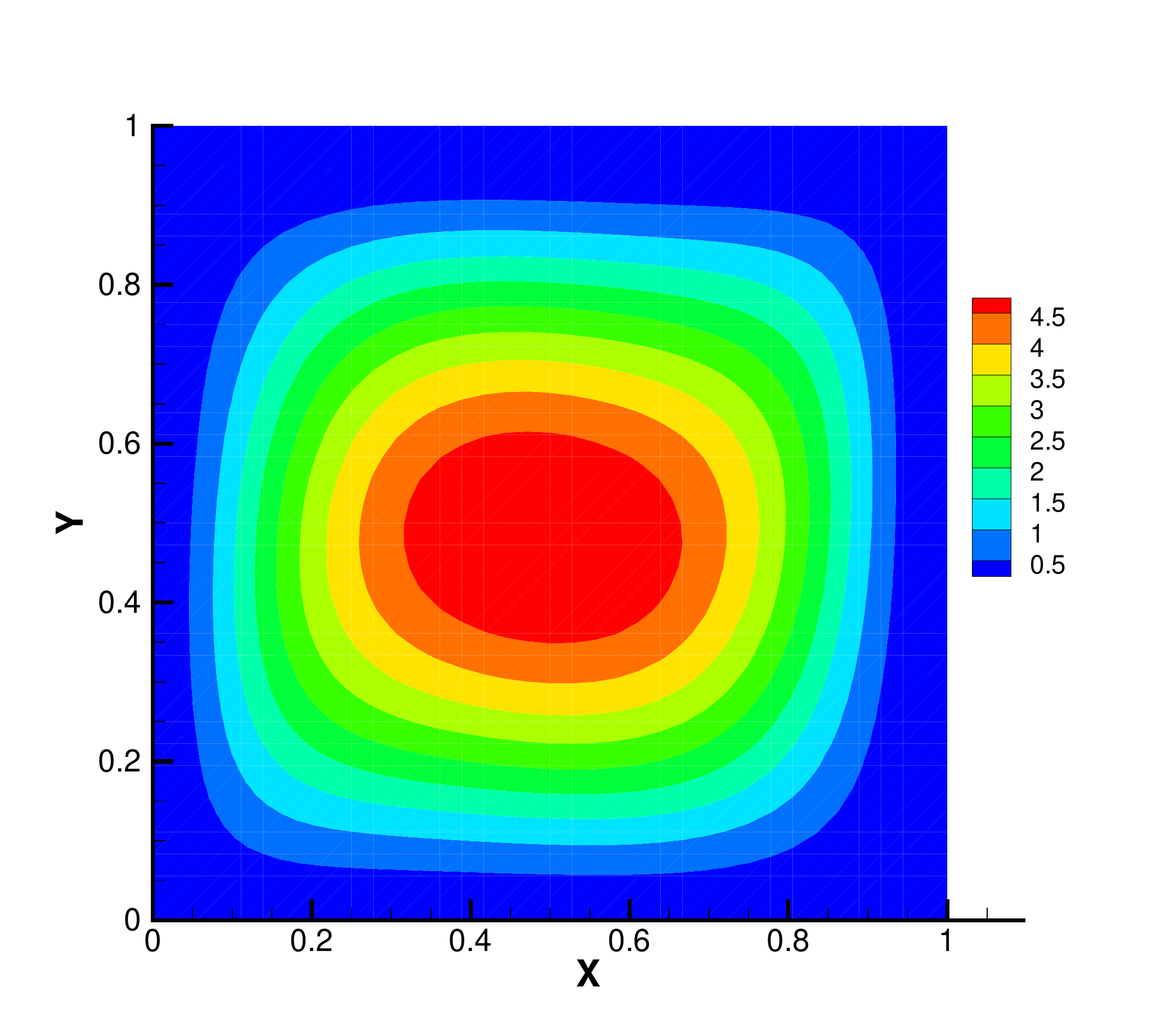}}
\subfigure[]{\includegraphics[width=0.32\textwidth]{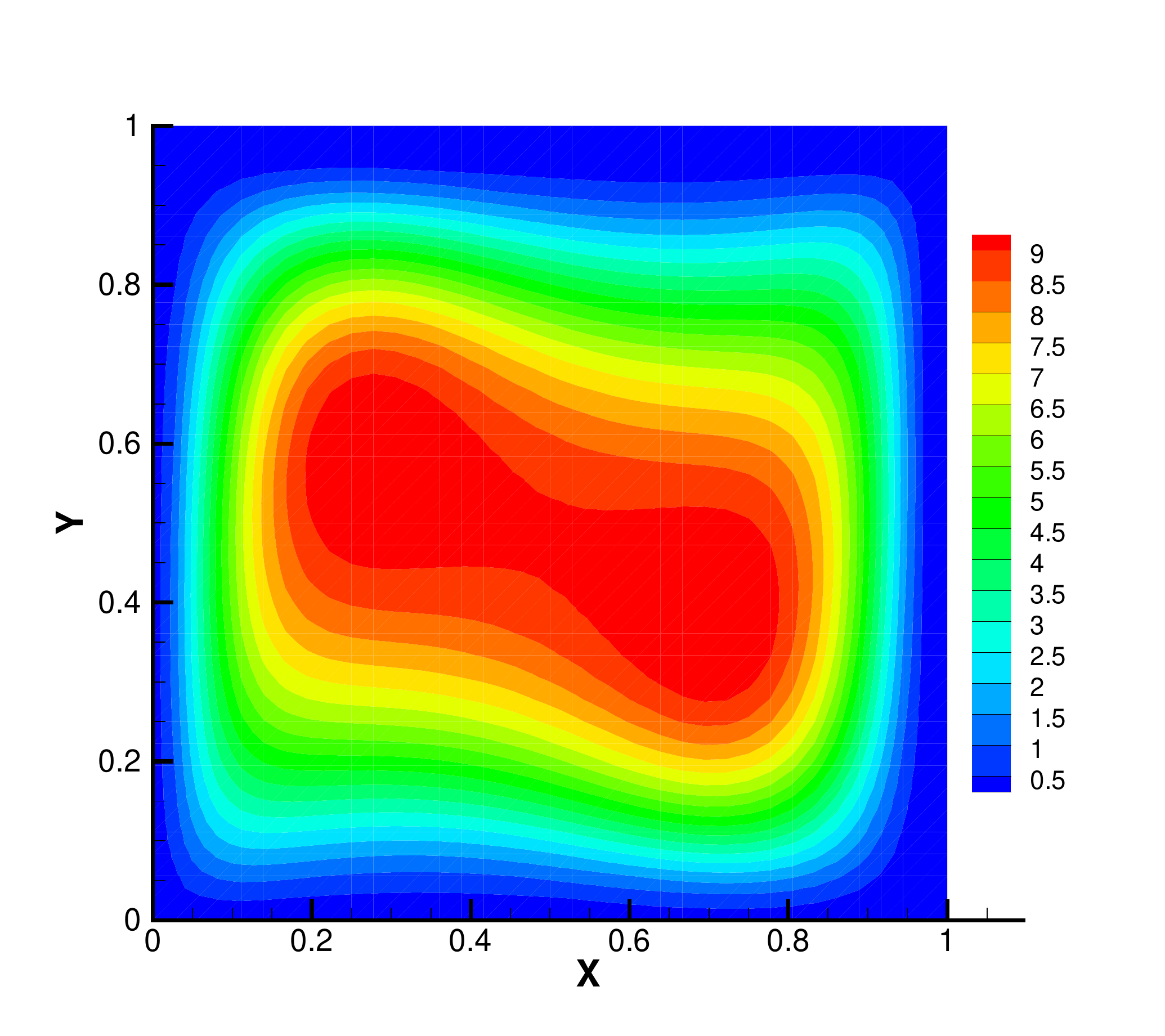}}
\caption{Isoline of electric potential: (a) Ra$=10^3$, (b) Ra$=10^4$, (c) Ra$=10^5.$}
\end{center}
\end{figure}

\begin{figure}[!htbp]
\begin{center}
\subfigure[]{\includegraphics[width=0.32\textwidth]{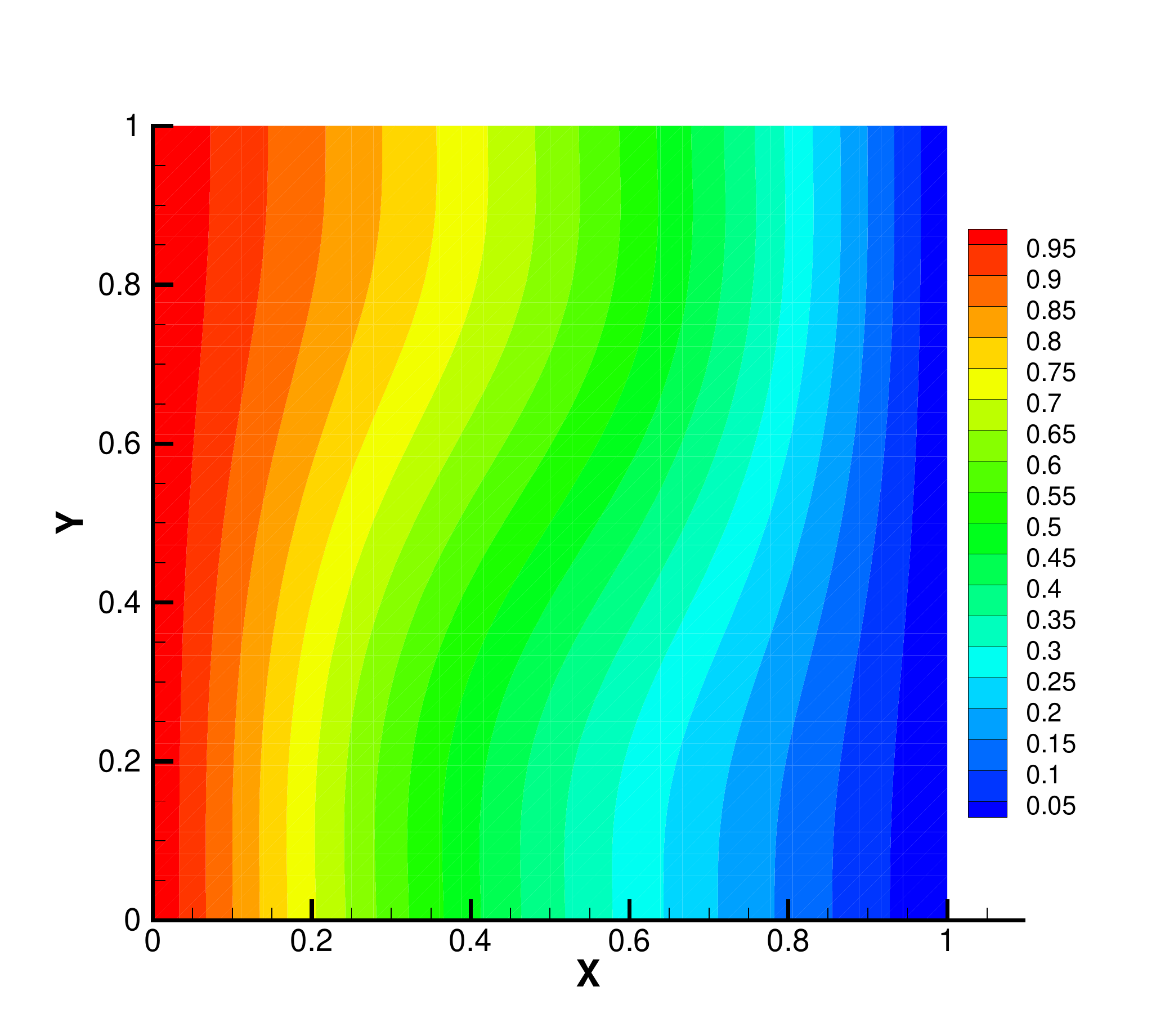}}
\subfigure[]{\includegraphics[width=0.32\textwidth]{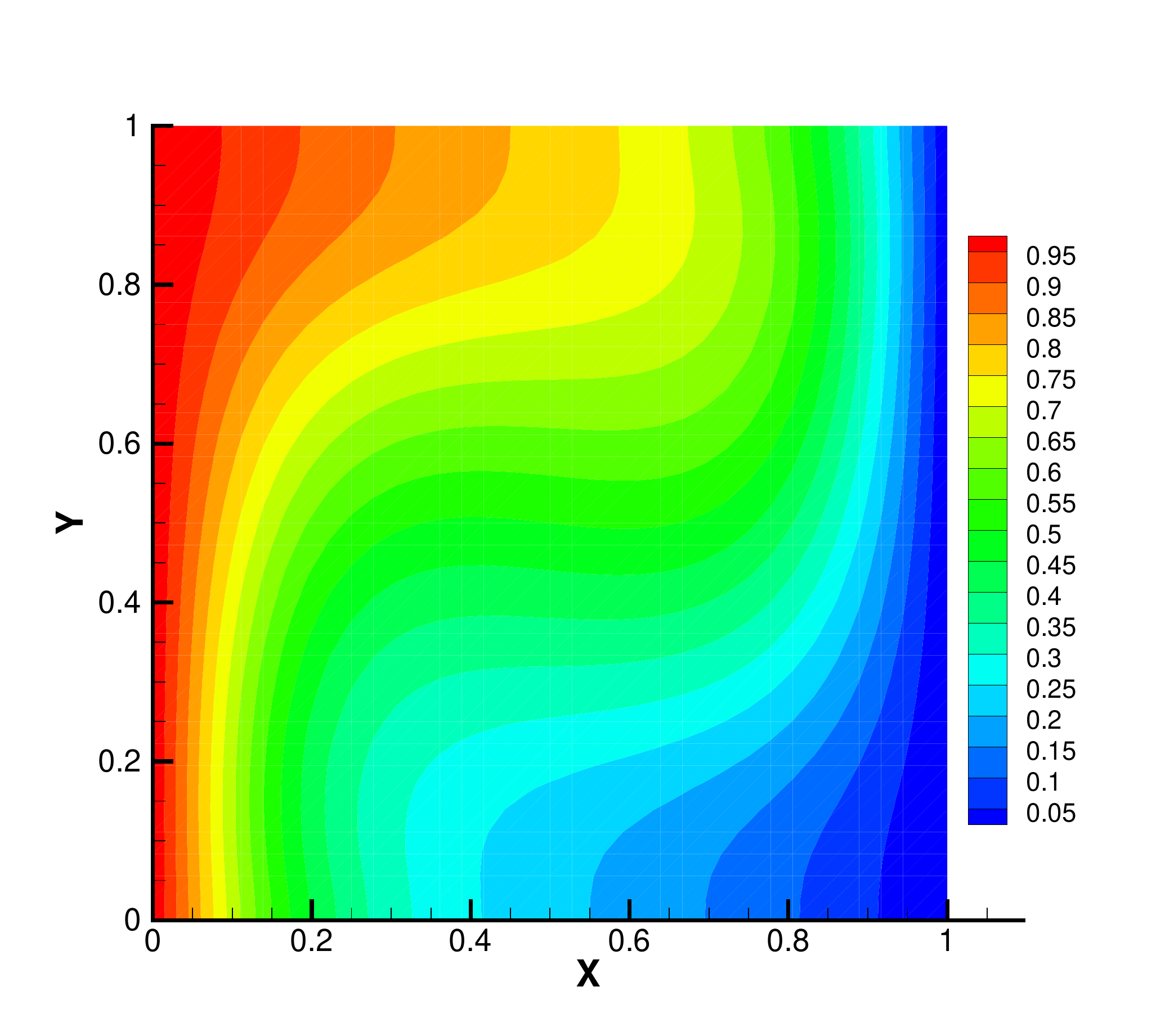}}
\subfigure[]{\includegraphics[width=0.32\textwidth]{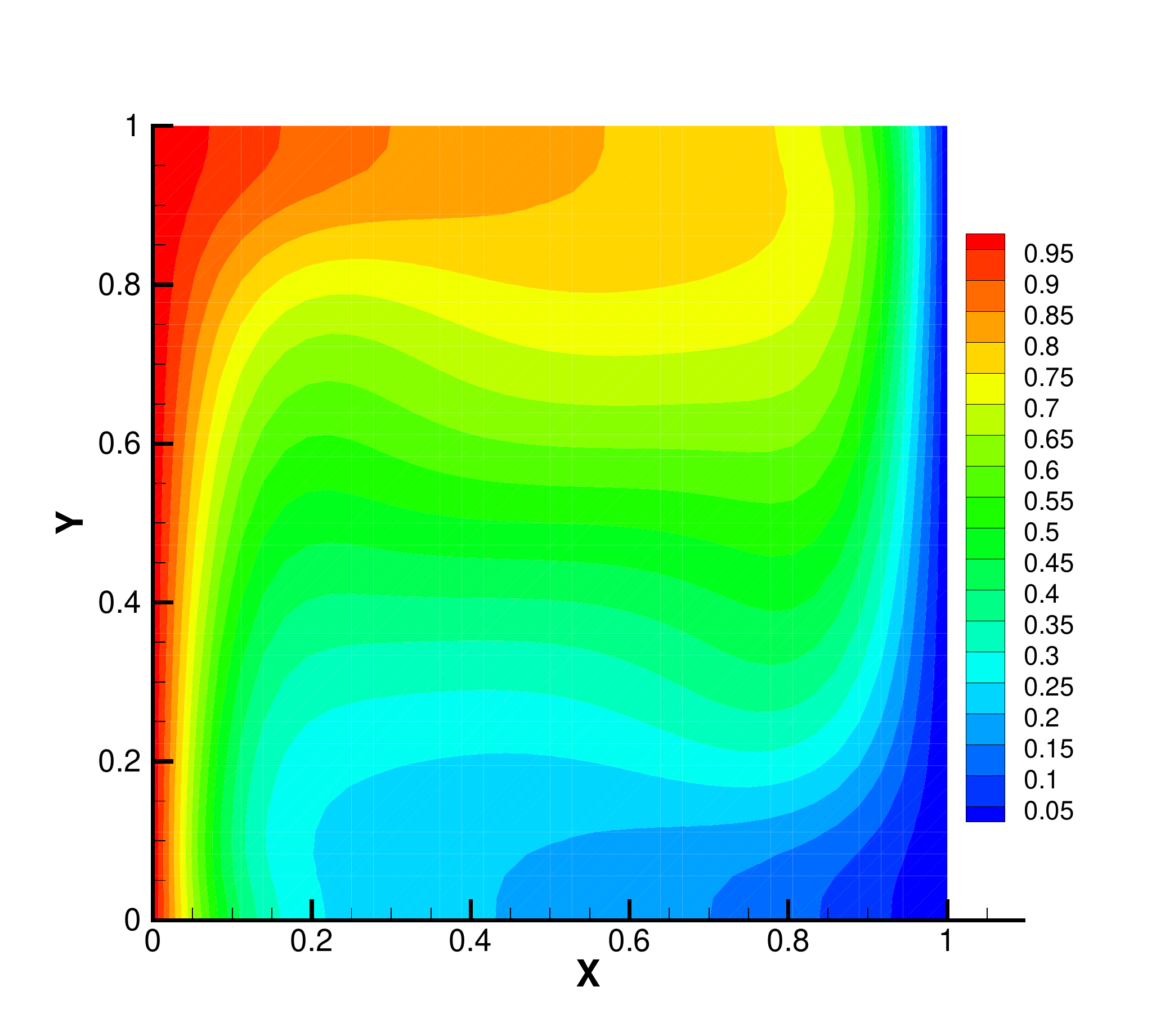}}
\caption{Isotherms: (a) Ra$=10^3$, (b) Ra$=10^4$, (c) Ra$=10^5.$}
\end{center}
\end{figure}

\begin{figure}

\begin{minipage}[t]{1\linewidth}
  \centering
  \includegraphics[width=4.5in,height=1in]{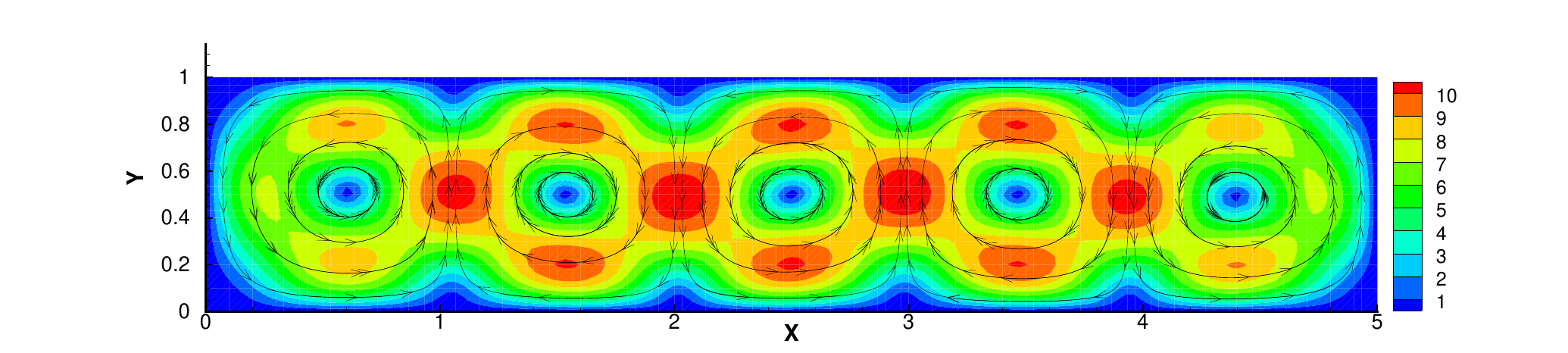}\\
\centering{(a)}\\
 \end{minipage}%


 \begin{minipage}[t]{1\linewidth}
  \centering
  \includegraphics[width=4.5in,height=1in]{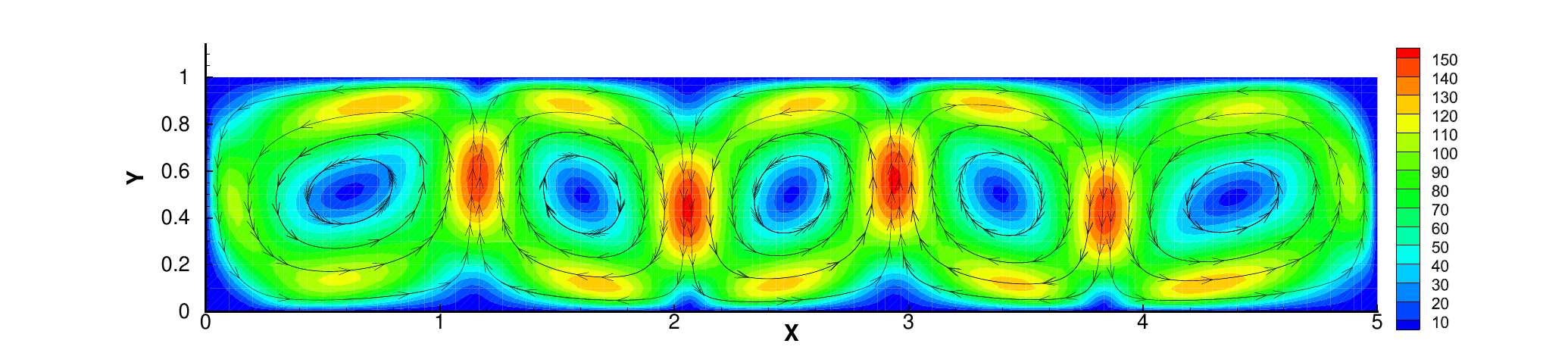}\\
\centering{(c)}\\
 \end{minipage}%
  \centering{\caption{Velocity streamlines with homogeneous heating: (a) Ra$=3\times 10^3$,(b) Ra=$10^5$.}}
\end{figure}

\begin{figure}

\begin{minipage}[t]{1\linewidth}
  \centering
  \includegraphics[width=4.5in,height=1in]{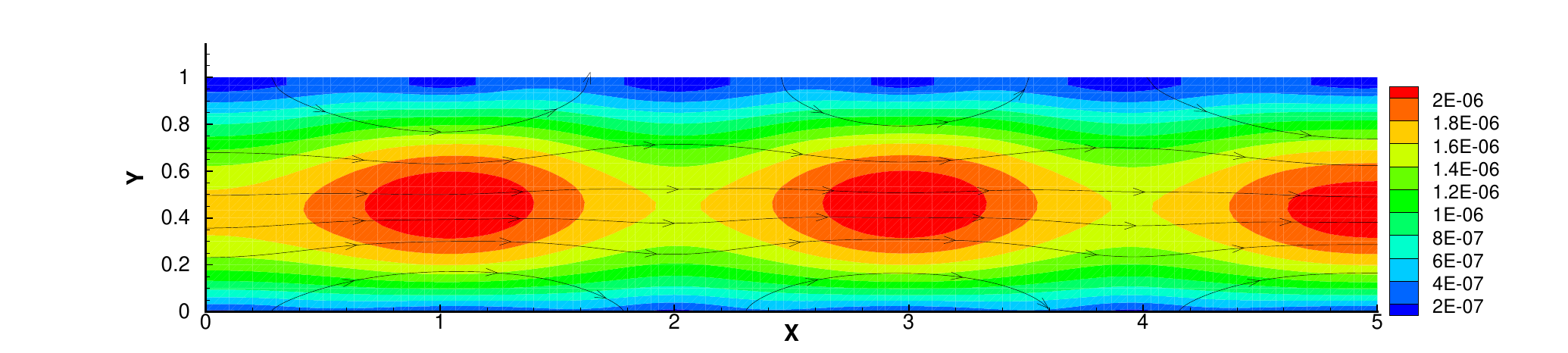}\\
\centering{(a)}\\
 \end{minipage}%


 \begin{minipage}[t]{1\linewidth}
  \centering
  \includegraphics[width=4.5in,height=1in]{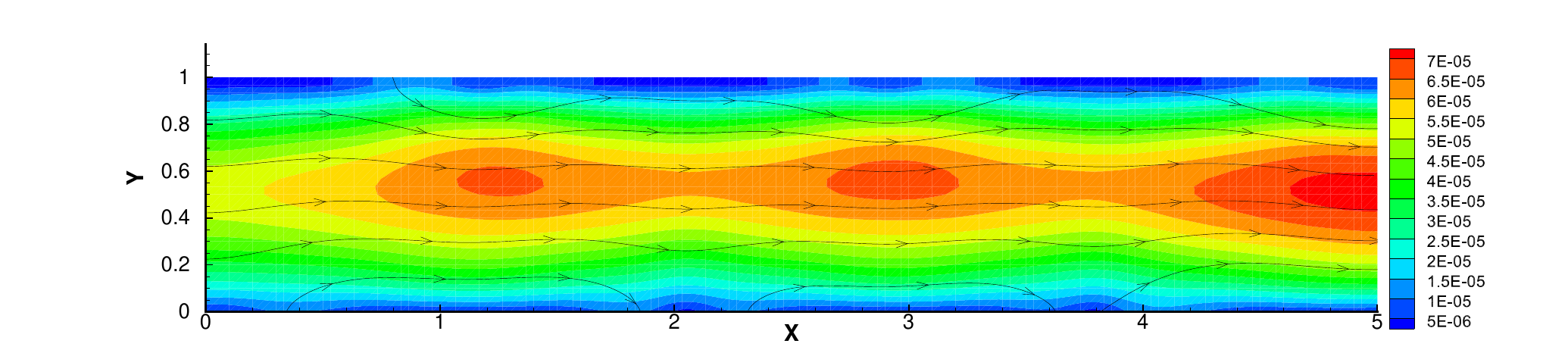}\\
\centering{(c)}\\
 \end{minipage}%
  \centering{\caption{Current density streamline with homogeneous heating: (a) Ra$=3\times 10^3$, (b) Ra=$10^5$.}}
\end{figure}

\begin{figure}

\begin{minipage}[t]{1\linewidth}
  \centering
  \includegraphics[width=4.5in,height=1in]{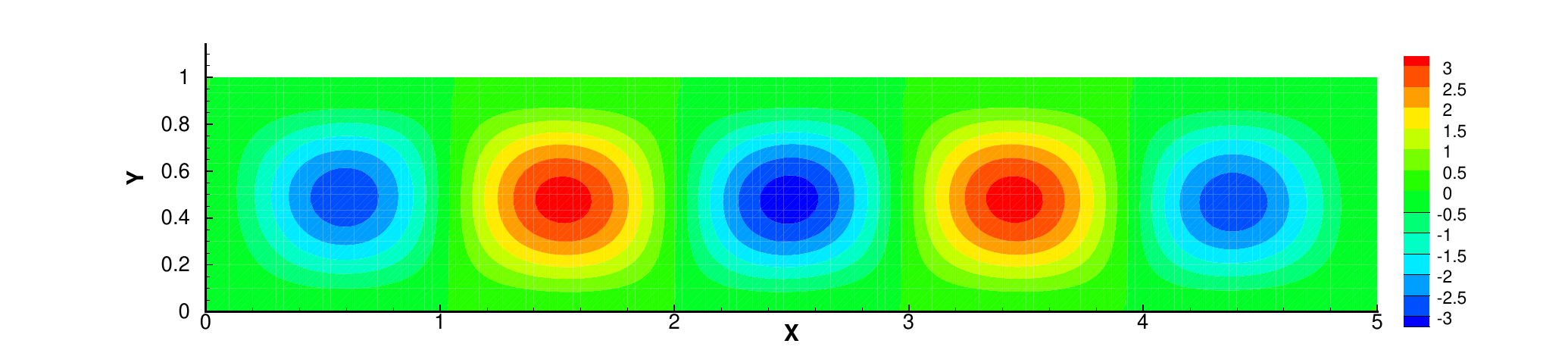}\\
\centering{(a)}\\
 \end{minipage}%


 \begin{minipage}[t]{1\linewidth}
  \centering
  \includegraphics[width=4.5in,height=1in]{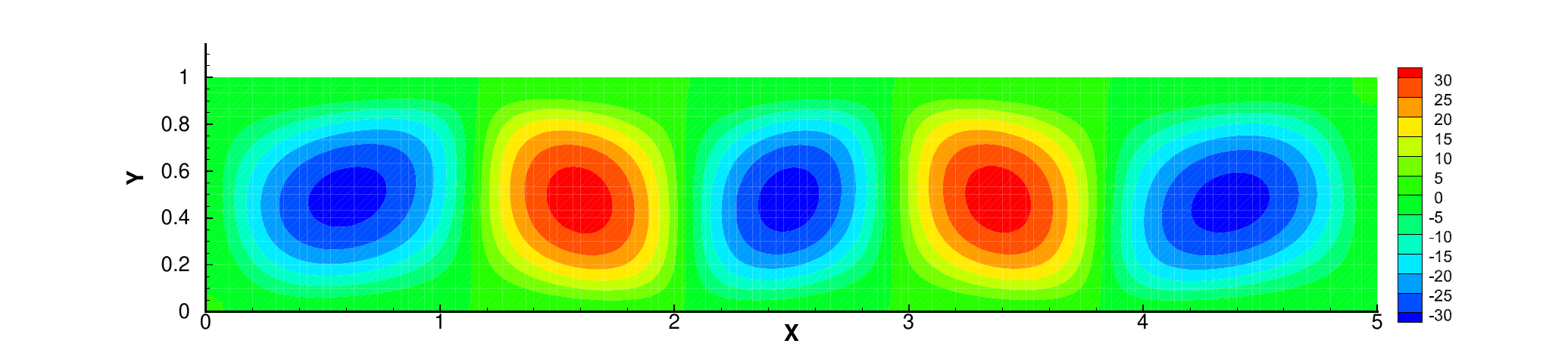}\\
\centering{(c)}\\
 \end{minipage}%
  \centering{\caption{Isoline of electric potential with homogeneous heating: (a) Ra$=3\times 10^3$, (b) Ra=$10^5$.}}
\end{figure}

\begin{figure}

\begin{minipage}[t]{1\linewidth}
  \centering
  \includegraphics[width=4.5in,height=1in]{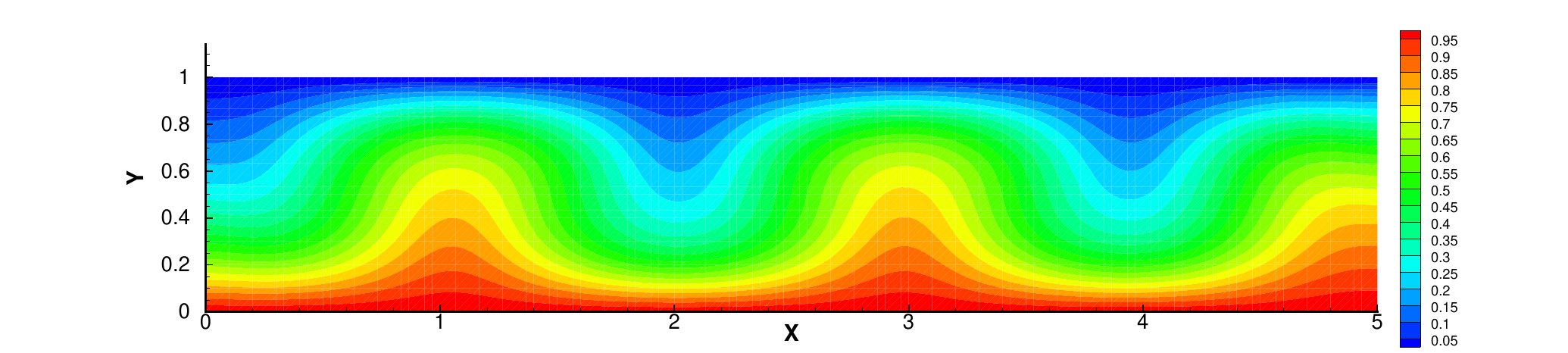}\\
\centering{(a)}\\
 \end{minipage}%


 \begin{minipage}[t]{1\linewidth}
  \centering
  \includegraphics[width=4.5in,height=1in]{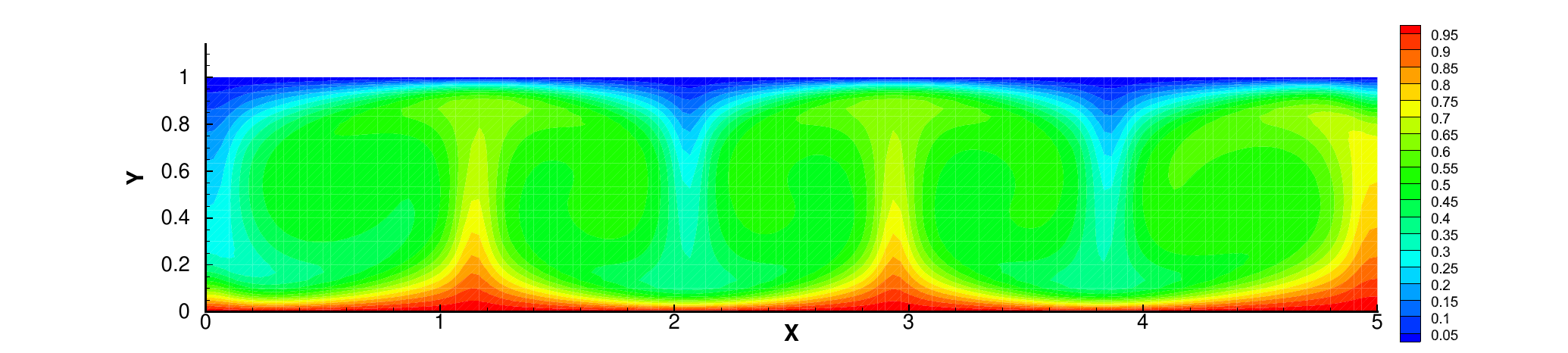}\\
\centering{(c)}\\
 \end{minipage}%
  \centering{\caption{Isotherms with homogeneous heating: (a) Ra$=3\times 10^3$, (b) Ra=$10^5$.}}
\end{figure}

\begin{figure}

\begin{minipage}[t]{1\linewidth}
  \centering
  \includegraphics[width=4.5in,height=1in]{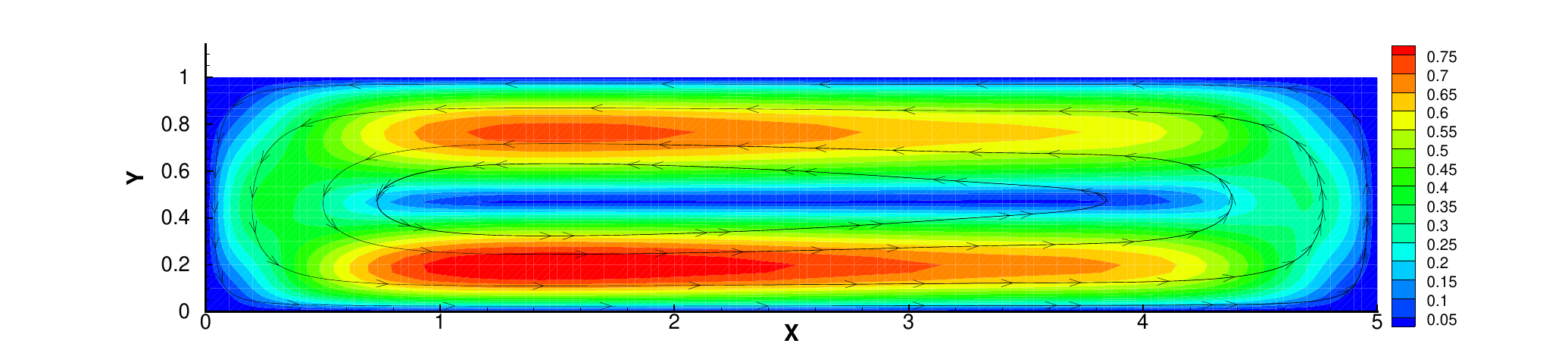}\\
\centering{(a)}\\
 \end{minipage}%


 \begin{minipage}[t]{1\linewidth}
  \centering
  \includegraphics[width=4.5in,height=1in]{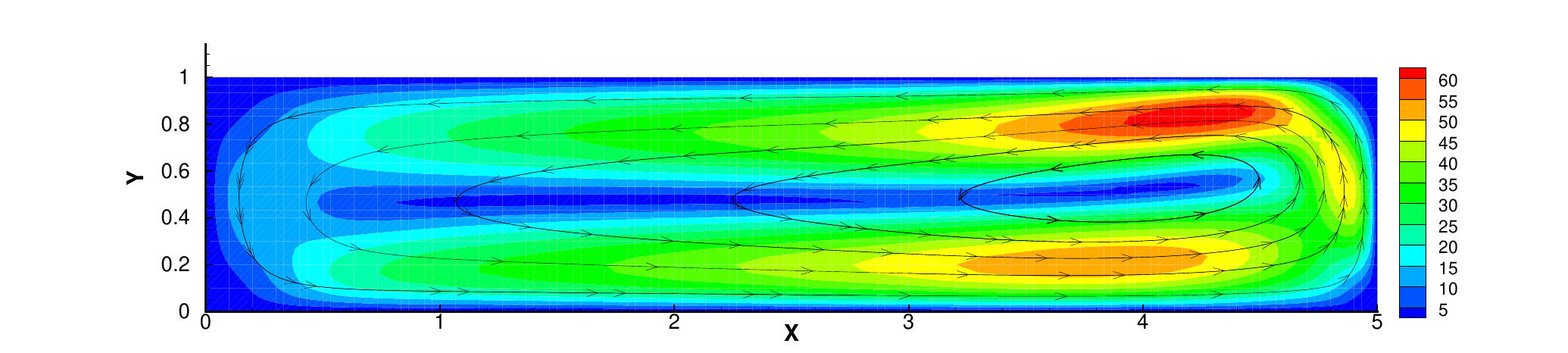}\\
\centering{(c)}\\
 \end{minipage}%
  \centering{\caption{Velocity streamlines with non-uniform heating: (a) Ra$=10^3$, (b) Ra=$5\times10^4$.}}
\end{figure}

\begin{figure}

\begin{minipage}[t]{1\linewidth}
  \centering
  \includegraphics[width=4.5in,height=1in]{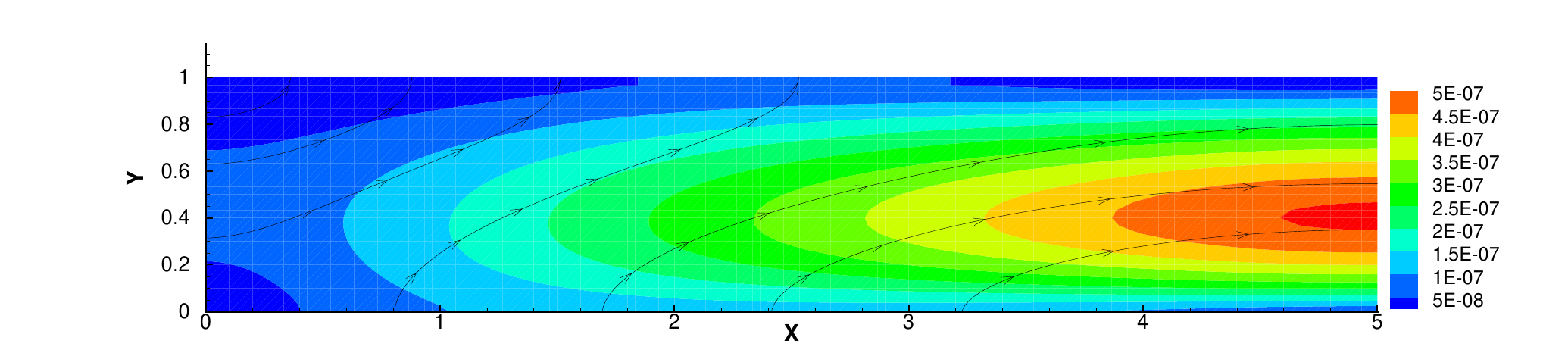}\\
\centering{(a)}\\
 \end{minipage}%


 \begin{minipage}[t]{1\linewidth}
  \centering
  \includegraphics[width=4.5in,height=1in]{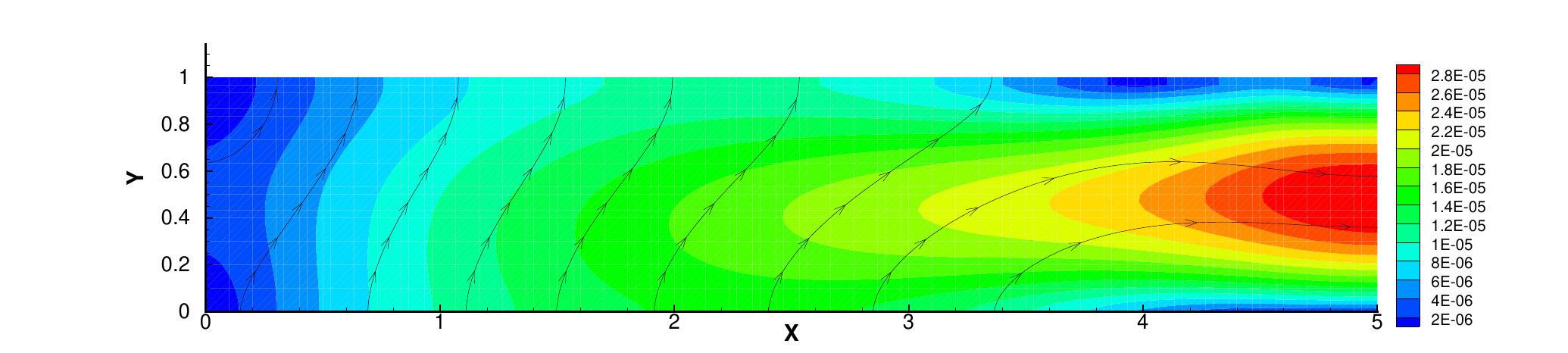}\\
\centering{(c)}\\
 \end{minipage}%
  \centering{\caption{Current density streamline with non-uniform heating: (a) Ra$=10^3$, (b) Ra=$5\times10^4$.}}
\end{figure}

\begin{figure}

\begin{minipage}[t]{1\linewidth}
  \centering
  \includegraphics[width=4.5in,height=1in]{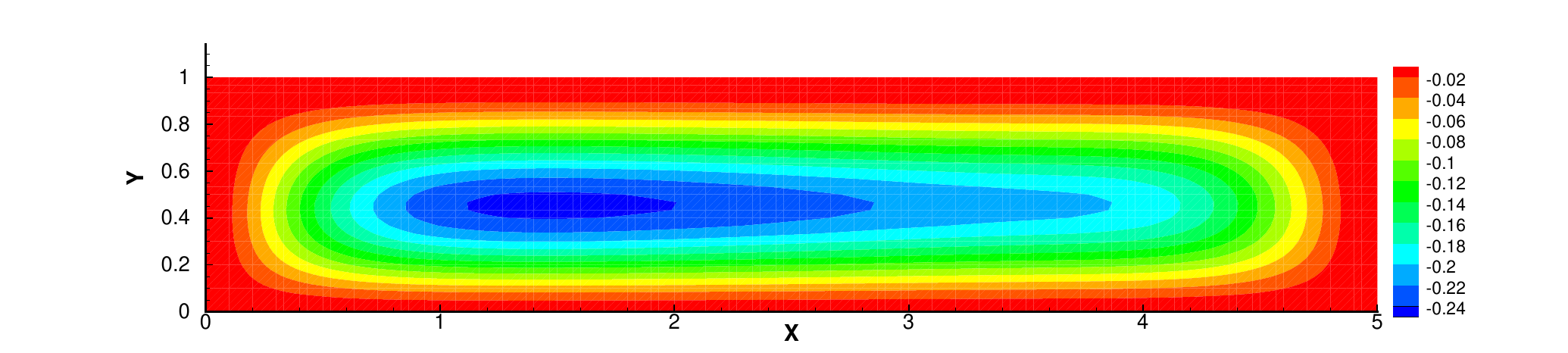}\\
\centering{(a)}\\
 \end{minipage}%


 \begin{minipage}[t]{1\linewidth}
  \centering
  \includegraphics[width=4.5in,height=1in]{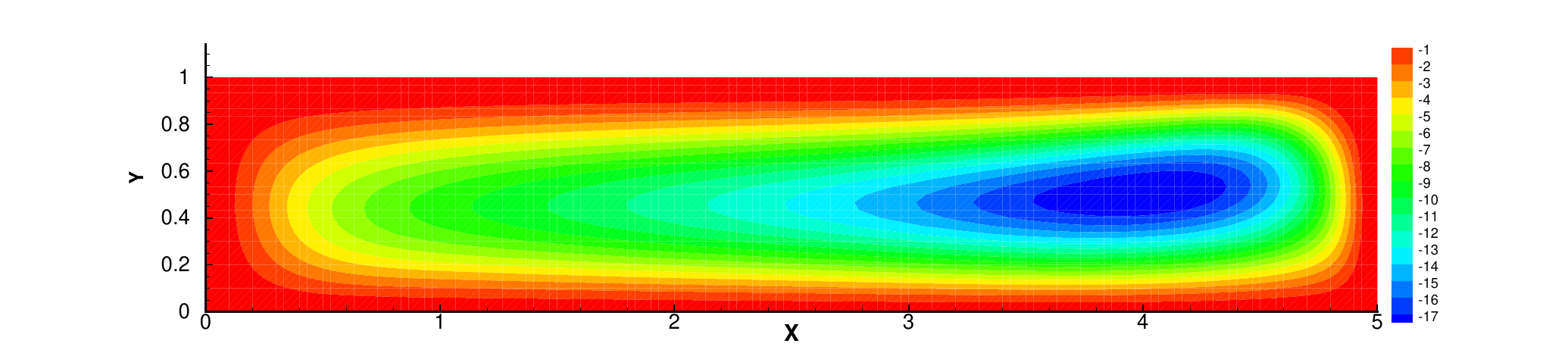}\\
\centering{(c)}\\
 \end{minipage}%
  \centering{\caption{Isoline of electric potential with non-uniform heating: (a) Ra$=10^3$, (b) Ra=$5\times10^4$.}}
\end{figure}

\begin{figure}

\begin{minipage}[t]{1\linewidth}
  \centering
  \includegraphics[width=4.5in,height=1in]{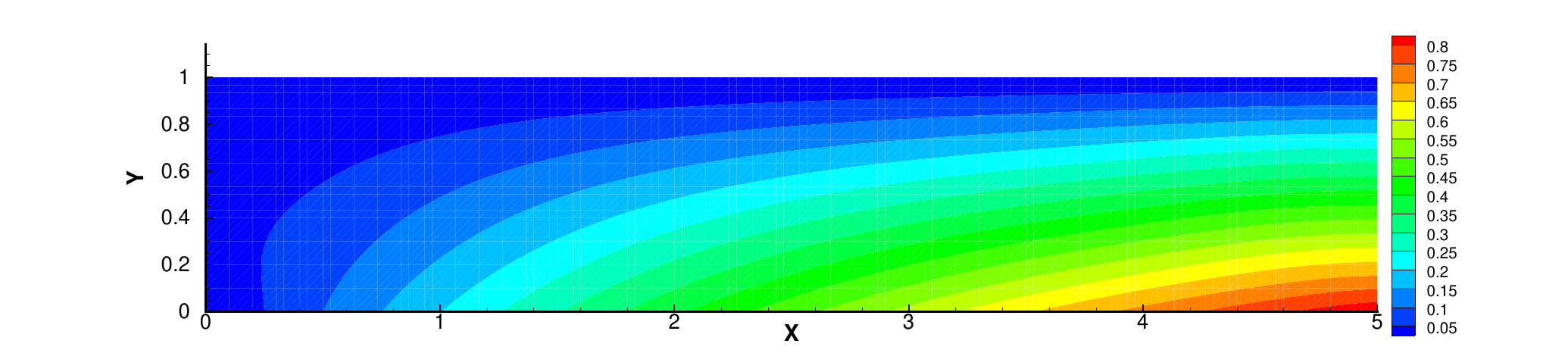}\\
\centering{(a)}\\
 \end{minipage}%


 \begin{minipage}[t]{1\linewidth}
  \centering
  \includegraphics[width=4.5in,height=1in]{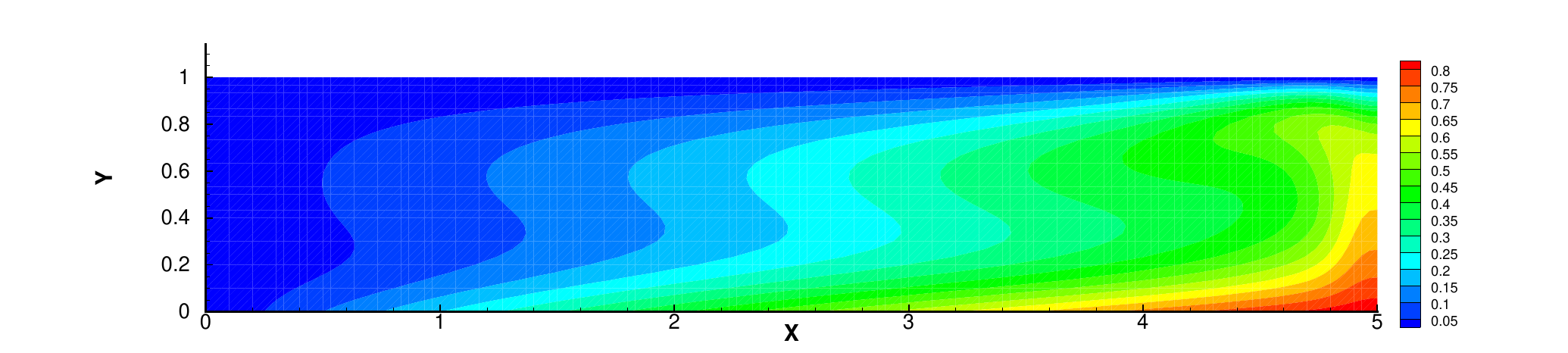}\\
\centering{(c)}\\
 \end{minipage}%
  \centering{\caption{Isotherms with non-uniform heating: (a) Ra$=10^3$, (b) Ra=$5\times10^4$.}}
\end{figure}

\section{Conclusion}
In this paper, three iterative methods for steady-state thermally coupled inductionless MHD problems are theoretically analyzed. In the finite element discretization, we solve simultaneously the current density and potential, and use respectively conforming face elements in $\bm{H}(\mbox{div},\Omega)$ along with conforming volume elements in $L^2(\Omega)$ to discretize them. This results in an accurate divergence-free for the discrete current density. Under the assumption of weak regularity, we give further the optimal error estimates, where the error bounds of velocity, pressure, charge density and temperature are independent of potential. After that, we propose and analyze three coupling iterations of the scheme. Finally, through numerical experiments, we verify the results of our theoretical analysis and prove the effectiveness of our proposed iterative method. In our following research, a two-level iterative method for thermally coupled inductionless MHD equations will be considered.


\begin{thebibliography}{10}

\bibitem{2001On}
M.~A. Abdou, A.~Ying, and N.~B. Morley.
\newblock On the exploration of innovative concepts for fusion chamber
  technology, apex interim report overview.
\newblock {\em Fusion Engineering and Design}, 73:83--93, 2001.

\bibitem{abdou2001exploration}
MA~Abdou, A~Ying, N~Morley, K~Gulec, S~Smolentsev, M~Kotschenreuther, S~Malang,
  S~Zinkle, T~Rognlien, P~Fogarty, et~al.
\newblock On the exploration of innovative concepts for fusion chamber
  technology.
\newblock {\em Fusion Engineering and Design}, 54(2):181--247, 2001.

\bibitem{0a}
L.~Abstract, M.~Li, W.~Ni, and Zheng.
\newblock a charge-conservative finite element method for inductionless mhd
  equations. part ii: a robust solver ast.

\bibitem{2014Block}
Santiago Badia, Alberto F.Martin, and Ramon Planas.
\newblock Block recursive LU preconditioners for the thermally coupled
  incompressible inductionless mhd problem.
\newblock {\em Journal of Computational Physics}, 274:562--591, 2014.

\bibitem{1991Mixed}
F.~Brezzi and M.~Fortin.
\newblock {\em Mixed and hybrid finite element methods}.
\newblock Springer-Verlag,, 1991.

\bibitem{davidson2002introduction}
Peter~Alan Davidson.
\newblock An introduction to magnetohydrodynamics, 2002.

\bibitem{Evans1964Partial}
Evans and C.~Lawrence.
\newblock Partial differential equations.
\newblock {\em Intersxcience Publishers}, 1964.

\bibitem{gerbeau2006mathematical}
Jean-Fr{\'e}d{\'e}ric Gerbeau, Claude Le~Bris, and Tony Leli{\`e}vre.
\newblock {\em Mathematical methods for the magnetohydrodynamics of liquid
  metals}.
\newblock Clarendon Press, 2006.

\bibitem{1986Finite}
V.~Girault and P.~A. Raviart.
\newblock Finite element methods for navier-stokes equations: Theory and
  algorithms.
\newblock {\em NASA STI/Recon Technical Report A}, 87, 1986.

\bibitem{greif2010mixed}
Chen Greif, Dan Li, Dominik Sch{\"o}tzau, and Xiaoxi Wei.
\newblock A mixed finite element method with exactly divergence-free velocities
  for incompressible magnetohydrodynamics.
\newblock {\em Computer Methods in Applied Mechanics and Engineering},
  199(45-48):2840--2855, 2010.

\bibitem{MD1991On}
MD~Gunzburger, A.~J. Meir, and J.~S. Peterson.
\newblock On the existence, uniqueness, and finite element approximation of
  solutions of the equations of stationary, incompressible
  magnetohydrodynamics.
\newblock {\em Mathematics of Computation}, 56(194):523--563, 1991.

\bibitem{2018A}
R.~Hiptmair, L.~Li, S.~Mao, and W.~Zheng.
\newblock A fully divergence-free finite element method for magnetohydrodynamic
  equations.
\newblock {\em Mathematical Models and Methods in Applied ences}, pages 1--37,
  2018.

\bibitem{hu2017stable}
Kaibo Hu, Yicong Ma, and Jinchao Xu.
\newblock Stable finite element methods preserving $\nabla\cdot\bm{B}$ exactly
  for mhd models.
\newblock {\em Numerische Mathematik}, 135(2):371--396, 2017.

\bibitem{hughes1966electromagnetodynamics}
William~F Hughes and Frederick~John Young.
\newblock The electromagnetodynamics of fluids.
\newblock {\em The electromagnetodynamics of fluids}, 1966.

\bibitem{layton1994two}
W~Layton, HWJ Lenferink, and JS~Peterson.
\newblock A two-level newton, finite element algorithm for approximating
  electrically conducting incompressible fluid flows.
\newblock {\em Computers \& Mathematics with Applications}, 28(5):21--31, 1994.

\bibitem{lifshits2012magnetohydrodynamics}
Alexander~E Lifshits.
\newblock {\em Magnetohydrodynamics and spectral theory}, volume~4.
\newblock Springer Science \& Business Media, 2012.

\bibitem{long2022convergence}
Xiaonian Long and Qianqian Ding.
\newblock Convergence analysis of a conservative finite element scheme for the
  thermally coupled incompressible inductionless mhd problem.
\newblock {\em Applied Numerical Mathematics}, 182:176--195, 2022.

\bibitem{meir1995thermally}
AJ~Meir.
\newblock Thermally coupled, stationary, incompressible mhd flow; existence,
  uniqueness, and finite element approximation.
\newblock {\em Numerical Methods for Partial Differential Equations},
  11(4):311--337, 1995.

\bibitem{moreau1990magnetohydrodynamics}
Ren{\'e}~J Moreau.
\newblock {\em Magnetohydrodynamics}, volume~3.
\newblock Springer Science \& Business Media, 1990.

\bibitem{ni2012consistent}
Ming-Jiu Ni and Jun-Feng Li.
\newblock A consistent and conservative scheme for incompressible mhd flows at
  a low magnetic reynolds number. part iii: On a staggered mesh.
\newblock {\em Journal of Computational Physics}, 231(2):281--298, 2012.

\bibitem{ni2007current}
Ming-Jiu Ni, Ramakanth Munipalli, Peter Huang, Neil~B Morley, and Mohamed~A
  Abdou.
\newblock A current density conservative scheme for incompressible mhd flows at
  a low magnetic reynolds number. part ii: On an arbitrary collocated mesh.
\newblock {\em Journal of Computational Physics}, 227(1):205--228, 2007.

\bibitem{ni2007current1}
Ming-Jiu Ni, Ramakanth Munipalli, Neil~B Morley, Peter Huang, and Mohamed~A
  Abdou.
\newblock A current density conservative scheme for incompressible mhd flows at
  a low magnetic reynolds number. part i: On a rectangular collocated grid
  system.
\newblock {\em Journal of Computational Physics}, 227(1):174--204, 2007.

\bibitem{20101On}
Janet~S. Peterson.
\newblock On the finite element approximation of incompressible flows of an
  electrically conducting fluid.
\newblock {\em Numerical Methods for Partial Differential Equations},
  4(1):57--68, 2010.

\bibitem{ravindran2018decoupled}
SS~Ravindran.
\newblock A decoupled crank-nicolson time-stepping scheme for thermally coupled
  magneto-hydrodynamic system.
\newblock {\em An International Journal of Optimization and Control: Theories
  \& Applications (IJOCTA)}, 8(1):43--62, 2018.

\bibitem{2021Coupled}
Xiaodi Zhang and Qianqian Ding.
\newblock Coupled iterative analysis for stationary inductionless
  magnetohydrodynamic system based on charge-conservative finite element
  method.
\newblock 2021.

\end{thebibliography}

\end{document}